 \newtheorem{thm}{Theorem}[section]
 \newtheorem{prop}[thm]{Proposition}
 \newtheorem{cor}[thm]{Corollary}
\theoremstyle{definition}
 \newtheorem{rem}[thm]{Remark}
 \newtheorem{ex}[thm]{Example}
\newtheorem{note}[thm]{Note}
\newcommand{\K}{\mathbb{K}}
\newcommand{\N}{\mathbb{N}}
\newcommand{\Z}{\mathbb{Z}}
\newcommand{\R}{\mathbb{R}}
\newcommand{\C}{\mathbb{C}}
\newcommand{\st}{\;:\;}
\newcommand{\sspace}{{\cdot}}
\newcommand{\ssspace}{{\cdot\cdot}}
\newcommand{\del}{\partial}
\newcommand{\delbar}{\overline{\del}}
\newcommand{\n}{\mathfrak{n}}
\newcommand{\g}{\mathfrak{g}}
\newcommand{\solvmfd}{\left. \Gamma \middle\backslash G \right.}
\newcommand{\cp}{\smile}
\newcommand{\arxiv}[1]{\texttt{#1}}
\newcommand{\paragrafo}[2]{\smallskip \noindent \texttt{Step {#1}} -- {\itshape #2}\\}
\newcommand{\slf}{\mathfrak{sl}}
\DeclareMathOperator{\id}{id}
\DeclareMathOperator{\imm}{im}
\DeclareMathOperator{\de}{d}
\DeclareMathOperator{\esp}{e}
\DeclareMathOperator{\GL}{GL}
\DeclareMathOperator{\End}{End}
\DeclareMathOperator{\Aut}{Aut}
\DeclareMathOperator{\Hom}{Hom}
\DeclareMathOperator{\Der}{Der}
\DeclareMathOperator{\Char}{Char}
\DeclareMathOperator{\Tot}{Tot}
\DeclareMathOperator{\Doub}{Doub}
\DeclareMathOperator{\ad}{ad}
\DeclareMathOperator{\Ad}{Ad}
\DeclareMathOperator{\diag}{diag}
\newcommand{\gl}{\mathfrak{gl}}
\title{Symplectic Bott-Chern cohomology of solvmanifolds}
\author{Daniele Angella}
\address[Daniele Angella]{
Dipartimento di Matematica e Informatica ``Ulisse Dini''\\
Universit\`a degli Studi di Firenze\\
viale Morgagni 67/a\\
50134 Firenze, Italy
}
\email{daniele.angella@gmail.com}
\email{daniele.angella@unifi.it}
\author{Hisashi Kasuya}
\address[Hisashi Kasuya]{Department of Mathematics\\
Tokyo Institute of Technology\\
1-12-1, O-okayama, Meguro\\
Tokyo 152-8551, Japan}
\email{khsc@ms.u-tokyo.ac.jp}
\email{kasuya@math.titech.ac.jp}
\keywords{Dolbeault cohomology, Bott-Chern cohomology, solvmanifolds, invariant symplectic structure}
\thanks{During the preparation of this work, the first author has been granted with a research fellowship by Istituto Nazionale di Alta Matematica INdAM and with a Junior Visiting Position at Centro di Ricerca Matematica ``Ennio de Giorgi'' in Pisa, and he is supported by the Project PRIN ``Varietà reali e complesse: geometria, topologia e analisi armonica'', by the Project FIRB ``Geometria Differenziale e Teoria Geometrica delle Funzioni'', by project SIR 2014 AnHyC ``Analytic aspects in complex and hypercomplex geometry'' (code RBSI14DYEB), and by GNSAGA of INdAM.
The second author  is supported by JSPS Research Fellowships for Young Scientists.}
\subjclass[2010]{22E25; 53C30; 57T15; 53C55; 53D05}
\begin{document}

\begin{abstract}
 We study the symplectic Bott-Chern cohomology by L.-S. Tseng and S.-T. Yau for solvmanifolds endowed with left-invariant symplectic structures.
Our results are applicable to cohomology with values in local systems.
Studying symplectic Bott-Chern cohomology of solvmanifolds with values in local systems,
we give some remarks on symplectic Hodge theory.
\end{abstract}

\maketitle

\section*{Introduction}

In complex geometry, besides Dolbeault and de Rham cohomologies, also Bott-Chern and Aeppli cohomologies deserve much interest. For a complex manifold $X$, the {\em Bott-Chern cohomology} and the {\em Aeppli cohomology} are defined as
$$ H^{\bullet,\bullet}_{BC}(X) \;:=\; \frac{\ker\del\cap\ker\delbar}{\imm\del\delbar} \;, \qquad \text{ respectively } \qquad H^{\bullet,\bullet}_{A}(X) \;:=\; \frac{\ker\del\delbar}{\imm\del+\imm\delbar} \;. $$
In fact, they appear as natural tools in studying the geometry of complex (possibly non-K\"ahler) manifolds, see, e.g., \cite{aeppli, bott-chern, deligne-griffiths-morgan-sullivan, varouchas, alessandrini-bassanelli, schweitzer, kooistra, bismut, tseng-yau-3, angella-1, angella-tomassini-3}.

\medskip

Let $X$ be a $2n$-dimensional manifold endowed with a symplectic structure $\omega$.
We define the co-differential operator $\de^\Lambda := \left[\de,-\iota_{\omega^{-1}}\right]$ on the de Rham complex $\wedge^{\bullet}X$ which was introduced by  J.-L. Koszul, \cite{koszul}, and by J.-L. Brylinski, \cite{brylinski}.
In \cite{tseng-yau-1, tseng-yau-2, tseng-yau-3}, L.-S. Tseng and S.-T. Yau introduced and studied the cohomologies  
$$ H^{\bullet}_{BC}(X;\R) \;:=\; \frac{\ker\de\cap\ker\de^\Lambda}{\imm\de\de^\Lambda} \qquad \text{ and } \qquad H^{\bullet}_{A}(X;\R) \;:=\; \frac{\ker\de\de^\Lambda}{\imm\de+\imm\de^\Lambda} \;. $$
Since they are analogous to Bott-Chern and Aeppli cohomologies of complex manifolds respectively, we call  $ H^{\bullet}_{BC}(X;\R) $  the {\em symplectic Bott-Chern cohomology} and 
$ H^{\bullet}_{A}(X;\R) $ the {\em symplectic Aeppli cohomology}.
We notice that  Tseng and Yau did not use this terminology.
But this  terminology is very useful in this paper for using the result in \cite{angella-kasuya-1}, see Corollary \ref{ives} and its proof.
Moreover, we remark that  $ H^{\bullet}_{BC}(X;\R) $ and $ H^{\bullet}_{A}(X;\R) $  can be considered as the  Bott-Chern and Aeppli cohomologies of a ``generalized'' complex manifold, (see \cite{angella-tomassini-5},) where we regard a symplectic structure as a generalized complex structure of type $0$, (see \cite{cavalcanti, gualtieri}).

We recall that {\em solvmanifolds} are compact quotients $\solvmfd$ of connected simply-connected solvable Lie groups $G$ by co-compact discrete subgroups $\Gamma$.
When $G$ is nilpotent, we call $\solvmfd$ {\em nilmanifold}.
The purpose of this paper is to study the symplectic Bott-Chern and Aeppli cohomologies of symplectic solvmanifolds.
Let $\solvmfd$ be a solvmanifold and $\g$ the Lie algebra of $G$.
Consider the cochain complex $\wedge^{\bullet}\g^{\ast}$ of left-invariant differential forms and the inclusion
$$\iota: \wedge^{\bullet}\g^{\ast}\hookrightarrow \wedge^{\bullet}\solvmfd .
$$
This inclusion induces the map $H^{\bullet}(\g;\R)\to H^{\bullet}_{dR}(\solvmfd;\R)$ where $H^{\bullet}(\g;\R)$ is the cohomology of $\wedge^{\bullet}\g^{\ast}$ and $ H^{\bullet}_{dR}(\solvmfd;\R)$ is the de Rham cohomology of $\solvmfd$.
Suppose that $\solvmfd$ admits a left-invariant  symplectic structure.
Then the co-differential $\de^\Lambda$ is defined on $\wedge^{\bullet}\g^{\ast}$ and we can define the symplectic Bott-Chern and Aeppli cohomologies $ H^{\bullet}_{BC}(\g;\R) $ and $ H^{\bullet}_{A}(\g;\R) $ of the Lie algebra $\g$.
We consider the  maps
$$ H^{\bullet}_{BC}(\g;\R)\to  H^{\bullet}_{BC}(X;\R)  \qquad \text{ and } \qquad  H^{\bullet}_{A}(\g;\R) \to H^{\bullet}_{A}(X;\R)  $$
induced by the inclusion $\iota: \wedge^{\bullet}\g^{\ast}\hookrightarrow ^{\bullet}\solvmfd$.
In \cite{macri}, M. Macr\`i prove that if the map  $H^{\bullet}(\g;\R)\to H^{\bullet}_{dR}(\solvmfd;\R)$ is an isomorphism,  then the  maps
$$ H^{\bullet}_{BC}(\g;\R)\to  H^{\bullet}_{BC}(X;\R)  \qquad \text{ and } \qquad  H^{\bullet}_{A}(\g;\R) \to H^{\bullet}_{A}(X;\R)  $$ are also isomorphisms.
Hence, by Hattori's theorem in \cite{hattori}, if $G$ is completely solvable, then  the  maps
$$ H^{\bullet}_{BC}(\g;\R)\to  H^{\bullet}_{BC}(X;\R)  \qquad \text{ and } \qquad  H^{\bullet}_{A}(\g;\R) \to H^{\bullet}_{A}(X;\R)  $$ are  isomorphisms.
In this paper, we give a new proof of Macr\`i's result (Theorem \ref{thm:sympl-cohom-compl-solv}).
Moreover, in this paper,  we will  reach  a more general case. 
In fact, on a general solvmanifold $\solvmfd$, the  map  $H^{\bullet}(\g;\R)\to H^{\bullet}_{dR}(\solvmfd;\R)$ is not an isomorphism and we can not apply Macr\`i's result to general solvmanifolds.
In \cite{kasuya-jdg}, for any solvmanifold $\solvmfd$, the second author construct an explicit finite-dimensional sub-complex $A_{\Gamma}^{\bullet}\subseteq \wedge^{\bullet} \solvmfd\otimes_\R \C$ so that the inclusion induces a cohomology isomorphism $$H^{\bullet}(A_{\Gamma}^{\bullet})\cong  H^{\bullet}_{dR}(\solvmfd;\R)\otimes_\R \C.$$
In this paper, we show the following result.

\smallskip
\noindent{\bfseries Theorem (see Theorem \ref{thm:sympl-cohom-solvmfd}).}
{\itshape
Suppose that a left-invariant symplectic structure $\omega$ on a solvmanifold $\solvmfd$ satisfies $\omega\in A_{\Gamma}^{2}$.
Then, the finite-dimensional sub-complex $A^\bullet_{\Gamma} $ allows to compute the symplectic Bott-Chern and Aeppli  cohomologies of $\solvmfd$.
}
\smallskip

For a compact $2n$-dimensional  symplectic manifold $X$ with a symplectic form $\omega$, 
the cohomologies  $ H^{\bullet}_{BC}(X;\R) $ and $ H^{\bullet}_{A}(X;\R) $  relate to the {\em Hard Lefschetz Condition}, namely, for every $k\in\Z$ with $0\le k\le n$, the map $$\left[\omega^k\right] \cp \sspace \colon H^{n-k}_{dR}(X;\R) \to H^{n+k}_{dR}(X;\R)$$ is an isomorphism.
The Hard Lefschetz Condition is equivalent to the natural map $H^{\bullet}_{BC}(X;\R) \to H^{\bullet}_{dR}(X;\R)$ being injective: in such a case, we say that $X$ satisfies the {\em $\de\de^\Lambda$-Lemma}.

As similar to the complex case, \cite{angella-tomassini-3}, there is an inequality {\itshape à la} Fr\"olicher relating the dimensions of the symplectic Bott-Chern and Aeppli cohomologies and the Betti numbers, \cite{angella-tomassini-5}: for every $k\in\Z$,
$$ \dim_\R H^k_{BC}(X;\R) + \dim_\R H^k_{A}(X;\R) - 2\, \dim_\R H^k_{dR}(X;\R) \;\geq\; 0 \;; $$
furthermore, the equality holds for every $k\in\Z$ if and only if $X$ satisfies the $\de\de^\Lambda$-Lemma.

It is known that a non-toral nilmanifold does not satisfy the Hard Lefschetz Condition, \cite{benson-gordon-nilmanifolds}.
In Section \ref{sec:applications}, we compute the symplectic cohomologies for $4$-dimensional solvmanifolds, for $6$-dimensional nilmanifolds,
 and for the Nakamura manifold. 
 In this sense, the numbers $$\left\{\dim_\R H^{k}_{BC}(X;\R)+\dim_\R H^{k}_{A}(X;\R)-2\,\dim_\R H^{k}_{dR}(X;\R)\right\}_{k\in\Z}$$ provide a measure of the non-K\"ahlerianity of the nilmanifold $X$.

\medskip

We further consider symplectic cohomologies with values in a local system in Section \ref{sec:twisted-sympl-cohom}.
By using $\slf_{2}(\C)$-representations on bi-differential $\Z$-graded complexes, we study twisted Hard Lefschetz Condition and $D_\phi D_\phi^\Lambda$-Lemma.
Finally, in Section \ref{sec:twisted-sympl-cohom-solvmfd}, we investigate twisted symplectic cohomologies on solvmanifolds.
As similar to the non-twisted case, by the Hattori theorem, \cite{hattori}, and the Mostow theorem, \cite{mostow}, we compute the symplectic cohomologies of special solvmanifolds with values in a local system by using Lie algebras.
Moreover, considering the spaces of differential forms on the solvmanifolds with values  in certain local systems so that these spaces have structures of  differential graded algebras given in Hain's paper \cite{Hai}, we compute the symplectic cohomologies of these differential graded algebras by using the Sullivan minimal models constructed by the second author in \cite{kasuya-jdg}, see Theorem \ref{sym-min}, Theorem \ref{aep-min}.
In fact, the cohomology computation by $A^{\bullet}_{\Gamma}$ as above is a consequence of such results.
By these results, we compute the twisted symplectic cohomologies of Sawai's examples of symplectic solvmanifolds which satisfy the Hard Lefschetz Condition but do not satisfy the twisted Hard Lefschetz Condition.

In general, twisted cohomologies do not have self-duality and so, when considering the Lefschetz operators on twisted cohomologies, surjectivity and injectivity are not equivalent.
By this, as regards twisted Hard Lefschetz Conditions and $D_\phi D_\phi^\Lambda$-Lemma, we have differences between the twisted case and the non-twisted case.
We explain such a difference by using Sawai's examples.

\bigskip

\noindent{\sl Acknowledgments.}
The first author would like to warmly thank Adriano Tomassini for his constant support and encouragement, for his several advices, and for many inspiring conversations, and Federico A. Rossi for useful discussions. The second author would like to express his gratitude to Toshitake Kohno for helpful suggestions and stimulating discussions.
The authors would like also to thank the anonymous Referee of \cite{angella-kasuya-1} for her/his valuable suggestions that improved the presentation of this paper too, and Nicoletta Tardini and Adriano Tomassini for having pointed out some errors in the computations of Table \ref{table:dim-cohom-6-nilmfd} in the prevous version.

\section{Preliminaries and notations on cohomology computations}

In this section, we briefly recall the results in \cite{angella-kasuya-1}, about several cohomologies associated to a bounded double complex, respectively bi-differential $\Z$-graded complex, of $\C$-vector spaces. In particular, we are concerned with studying when such cohomologies can be recovered by means of a suitable (possibly finite-dimensional) sub-complex.
We also consider Lefschetz conditions and $\del\delbar$-Lemma for bi-differential $\Z$-graded complexes.

\begin{rem}
For a graded vector space $V^{\bullet}$ and  endomorphisms $A\in \mathrm{End}^{p}(V^{\bullet})$ and $B\in \mathrm{End}^{q}(V^{\bullet})$ of degrees $p$ and $q$ respectively, we denote $[A,B]=AB-(-1)^{pq}BA$.
\end{rem}

\subsection{Cohomologies of double complexes}
Consider a bounded double complex $\left(A^{\bullet,\bullet},\, \del,\, \delbar\right)$ of $\C$-vector spaces. Namely, $\del\in \End^{1,0}\left(A^{\bullet,\bullet}\right)$ and $\delbar\in \End^{0,1}\left(A^{\bullet,\bullet}\right)$ are such that $\del^2=\delbar^2=\left[\del,\delbar\right]=0$, and $A^{p,q}=\{0\}$ but for finitely-many $(p,q)\in\Z^2$.

One can consider several cohomologies associated to $\left(A^{\bullet,\bullet},\, \del,\, \delbar\right)$. More precisely, for $p\in\Z$ and for $q\in\Z$, one has the cohomologies
$$ H^{\bullet,q}_{\del}(A^{\bullet,\bullet}) \;:=\; H^{\bullet,q}\left(A^{\bullet,q},\, \del\right) \qquad \text{ and } \qquad H^{p,\bullet}_{\delbar}(A^{\bullet,\bullet}) \;:=\; H^{p,\bullet}\left(A^{p,\bullet},\, \delbar\right) \;. $$
By denoting the total complex associated to $\left(A^{\bullet,\bullet},\, \del,\, \delbar\right)$ by $\left(\Tot^\bullet \left( A^{\bullet,\bullet} \right) := \bigoplus_{p+q=\bullet} A^{p,q},\; \de:=\del+\delbar\right)$, one has the cohomology
$$ H^{\bullet}_{dR}(A^{\bullet,\bullet}) \;:=\; H^\bullet\left(\Tot^\bullet\left(A^{\bullet,\bullet}\right),\, \de\right) \;. $$
Furthermore, for $(p,q)\in\Z^2$, one can consider the \emph{Bott-Chern cohomology}, \cite{bott-chern},
$$ H^{p,q}_{BC}(A^{\bullet,\bullet}) \;:=\; H\left(A^{p-1,q-1} \stackrel{\del\delbar}{\longrightarrow} A^{p,q} \stackrel{\del+\delbar}{\longrightarrow} A^{p+1,q}\oplus A^{p,q+1}\right) \;, $$
and the \emph{Aeppli cohomology}, \cite{aeppli},
$$ H^{p,q}_{A}(A^{\bullet,\bullet}) \;:=\; H\left(A^{p-1,q}\oplus A^{p,q-1} \stackrel{\left(\del,\,	 \delbar\right)}{\longrightarrow} A^{p,q} \stackrel{\del\delbar}{\longrightarrow} A^{p+1,q+1}\right) \;. $$

\subsection{Fr\"olicher inequalities}
Let $\left(A^{\bullet,\bullet},\, \del,\, \delbar\right)$ be a bounded double complex of $\C$-vector spaces.

We recall that the natural filtrations induce naturally two spectral sequences such that
$$ {'E}_1^{\bullet_1,\bullet_2}\left(A^{\bullet,\bullet},\, \del,\, \delbar\right) \;\simeq\; H^{\bullet_2}\left(A^{\bullet_1,\bullet},\, \delbar\right) \;\Rightarrow\; H^{\bullet_1+\bullet_2}\left(\Tot^\bullet\left(A^{\bullet,\bullet}\right),\, \de\right) \;, $$
and 
$$ {''E}_1^{\bullet_1,\bullet_2}\left(A^{\bullet,\bullet},\, \del,\, \delbar\right) \;\simeq\; H^{\bullet_1}\left(A^{\bullet,\bullet_2},\, \del\right) \;\Rightarrow\; H^{\bullet_1+\bullet_2}\left(\Tot^\bullet\left(A^{\bullet,\bullet}\right),\, \de\right) \;, $$
see, e.g., \cite[\S2.4]{mccleary}, see also \cite[\S3.5]{griffiths-harris}, \cite[Theorem 1, Theorem 3]{cordero-fernandez-ugarte-gray}. In particular, one gets the Fr\"olicher inequalities:
$$ \dim_\C \Tot^\bullet H^{\bullet,\bullet}_{\delbar}\left(A^{\bullet,\bullet}\right) \;\geq\; \dim_\C H^\bullet_{dR}\left(A^{\bullet,\bullet}\right) \qquad \text{ and } \qquad \dim_\C \Tot^\bullet H^{\bullet,\bullet}_{\del}\left(A^{\bullet,\bullet}\right) \;\geq\; \dim_\C H^\bullet_{dR}\left(A^{\bullet,\bullet}\right) \;. $$

Furthermore, the first author and A. Tomassini proved in \cite{angella-tomassini-5} the following inequality {\itshape à la} Fr\"olicher for the Bott-Chern cohomology: assuming that $\dim_\C H^{\bullet,\bullet}_{\del}\left(A^{\bullet,\bullet}\right) < +\infty$ and $\dim_\C H^{\bullet,\bullet}_{\delbar}\left(A^{\bullet,\bullet}\right) < +\infty$, it holds that
$$ \dim_\C \Tot^\bullet H^{\bullet,\bullet}_{BC}\left(A^{\bullet,\bullet}\right) + \dim_\C \Tot^\bullet H^{\bullet,\bullet}_{A}\left(A^{\bullet,\bullet}\right) \;\geq\; 2 \, \dim_\C H^\bullet_{dR}\left(A^{\bullet,\bullet}\right) \;, $$
and the equality holds if and only if $\left(A^{\bullet,\bullet},\, \del,\, \delbar\right)$ satisfies the $\del\delbar$-Lemma, namely, the natural map $\Tot^\bullet H^{\bullet,\bullet}_{BC}\left(A^{\bullet,\bullet}\right) \to H^{\bullet}_{dR}\left(A^{\bullet,\bullet}\right)$ is injective.

\subsection{Bi-differential $\Z$-graded complexes}\label{subsec:doub}
In the symplectic case, the space of differential forms has just a $\Z$-graduation. Hence we consider the case of a bounded bi-differential $\Z$-graded complex $\left(A^{\bullet},\, \del,\, \delbar\right)$ of $\K$-vector spaces, where $\K\in\left\{\R,\, \C\right\}$. Namely, $A^\bullet$ is a $\Z$-graded $\K$-vector space endowed with $\del\in \End^{1}\left(A^{\bullet}\right)$ and $\delbar\in \End^{-1}\left(A^{\bullet}\right)$ such that $\del^2=\delbar^2=\left[\del,\delbar\right]=0$, and $A^{k}=\{0\}$ but for finitely-many $k\in\Z$.

Define
$$
H^{\bullet}_{dR}(A^{\bullet}) \;:=\; \frac{\ker(\del+\delbar)}{\imm(\del+\delbar)} \;,
\qquad
H^{\bullet}_{\del}(A^{\bullet}) \;:=\; \frac{\ker\del}{\imm\del} \;,
\qquad
H^{\bullet}_{\delbar}(A^{\bullet}) \;:=\; \frac{\ker\delbar}{\imm\delbar} \;,
$$
and
$$
H^{\bullet}_{BC}(A^{\bullet}) \;:=\; \frac{\ker\del\cap\ker\delbar}{\imm\del\delbar} \;,
\qquad
H^{\bullet}_{A}(A^{\bullet}) \;:=\; \frac{\ker\del\delbar}{\imm\del+\imm\delbar} \;.
$$

\medskip

Starting with a bi-differential $\Z$-graded complex, one can construct a double complex. Following \cite[\S1.3]{brylinski}, \cite[\S4.2]{cavalcanti}, see \cite[\S II.2]{goodwillie}, \cite[\S II]{connes}, take an infinite cyclic multiplicative group $\left\{ \beta^m \st m \in \Z \right\}$, and define the $\Z^2$-graded $\K$-vector space
$$ \Doub^{\bullet_1,\bullet_2} A^\bullet \;:=\; A^{\bullet_1-\bullet_2} \otimes_\K \K\beta^{\bullet_2} $$
endowed with
$$ \del\otimes_\K\id \;\in\; \End^{1,0}\left(\Doub^{\bullet,\bullet} A^\bullet \right) \qquad \text{ and } \qquad \delbar\otimes_\K\beta \;\in\; \End^{0,1}\left(\Doub^{\bullet,\bullet} A^\bullet \right) \;, $$
which satisfy $\left(\del\otimes_\K\id\right)^2=\left(\delbar\otimes_\K\beta\right)^2=\left[\del\otimes_\K\id,\, \delbar\otimes_\R\beta\right]=0$.

For $\sharp \in \left\{ dR, \, \del, \, \delbar, \, BC, \, A \right\}$, there are natural isomorphisms of $\K$-vector spaces, \cite[Lemma 1.5]{angella-tomassini-5}:
$$ H^{\bullet_1,\bullet_2}_{\sharp} \left(\Doub^{\bullet,\bullet}A^{\bullet}\right) \;\simeq\; H_{\sharp}^{\bullet_1 - \bullet_2}\left(A^\bullet\right) \otimes_\K \K\beta^{\bullet_2} \;. $$

\subsection{Cohomology computations}

We recall the following result in \cite{angella-kasuya-1}, concerning the relations between the Bott-Chern cohomology of a double complex and the Bott-Chern cohomology of a suitable sub-complex.

\begin{thm}[{\cite[Theorem 1.3]{angella-kasuya-1}}]\label{surjBC}
 Let $\left(A^{\bullet,\bullet},\, \del,\, \delbar\right)$ be a bounded double complex of $\C$-vector spaces, and let $\left(C^{\bullet,\bullet},\, \del,\, \delbar\right) \hookrightarrow \left(A^{\bullet,\bullet},\, \del,\, \delbar\right)$ be a sub-complex.
 Suppose that
 \begin{enumerate}
  \item\label{item:thm-surg-hp-1} the induced map $H^{\bullet,\bullet}_{\delbar}\left(C^{\bullet,\bullet}\right) \to H^{\bullet,\bullet}_{\delbar}\left(A^{\bullet,\bullet}\right)$ is an isomorphism,
  \item\label{item:thm-surg-hp-2} the induced map $H^{\bullet,\bullet}_{\del}\left(C^{\bullet,\bullet}\right) \to H^{\bullet,\bullet}_{\del}\left(A^{\bullet,\bullet}\right)$ is an isomorphism, and
  \item\label{item:thm-surg-hp-3} for any $(p,q)\in\Z^2$, the induced map
        $$ \frac{\ker\left(\de\colon \Tot^{p+q}\left(C^{\bullet,\bullet}\right) \to \Tot^{p+q+1}\left(C^{\bullet,\bullet}\right) \right) \cap C^{p,q}}{\imm\left(\de\colon \Tot^{p+q-1}\left(C^{\bullet,\bullet}\right) \to \Tot^{p+q}\left(C^{\bullet,\bullet}\right)\right)} \to \frac{\ker\left(\de\colon \Tot^{p+q}\left(A^{\bullet,\bullet}\right) \to \Tot^{p+q+1}\left(A^{\bullet,\bullet}\right) \right) \cap A^{p,q}}{\imm\left(\de\colon \Tot^{p+q-1}\left(A^{\bullet,\bullet}\right) \to \Tot^{p+q}\left(A^{\bullet,\bullet}\right)\right)} $$
        is surjective.
 \end{enumerate}
 Then the induced map $H^{\bullet,\bullet}_{BC}\left(C^{\bullet,\bullet}\right) \to H^{\bullet,\bullet}_{BC}\left(A^{\bullet,\bullet}\right)$ is surjective.
\end{thm}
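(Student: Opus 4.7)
The strategy is to start with a representative $\alpha \in A^{p,q}$ of a Bott-Chern class in $A^{\bullet,\bullet}$, use hypothesis (iii) to find a $C$-valued $\de$-closed form $\gamma \in C^{p,q}$ representing the same class modulo $\de$-exact pieces, and then exploit (i) and (ii) to refine the reduction so that $\alpha$ differs from a modification of $\gamma$ by a $\del\delbar$-exact form. Concretely, the plan is to produce $\tilde\gamma \in C^{p,q}$ with $\del\tilde\gamma = \delbar\tilde\gamma = 0$ and $\alpha - \tilde\gamma \in \imm\left(\del\delbar \colon A^{p-1,q-1} \to A^{p,q}\right)$.

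By (iii), there exist $\gamma \in C^{p,q}$ with $\del\gamma = \delbar\gamma = 0$ and $\eta \in \Tot^{p+q-1}\left(A^{\bullet,\bullet}\right)$ with $\alpha - \gamma = \de\eta$. Writing $\eta = \sum_k \eta^{p-1+k, q-k}$ (the sum being finite by boundedness), the condition $\de\eta \in A^{p,q}$ decomposes into the central equation $\del\eta^{p-1,q} + \delbar\eta^{p,q-1} = \alpha - \gamma$ together with the side equations $\del\eta^{p-1+k, q-k} + \delbar\eta^{p+k, q-k-1} = 0$ for $k \neq 0$, forming a zigzag in the double complex whose extreme rungs are $\del$-closed (on the right) and $\delbar$-closed (on the left). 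Using (ii) inductively on the top rung---writing it as $c + \del\mu$ with $c \in C$ and subtracting $\de\mu$ from $\eta$---and using (ii) a second time to resolve the resulting $C$-valued coboundary $\delbar c$ as $\del$-exact in $C$, the top rung is pushed into $C$; the same argument (with (i) in place of (ii)) trims the bottom. After finitely many iterations $\eta$ is reduced to $\eta = \eta^{p-1,q} + \eta^{p,q-1}$ with $\delbar\eta^{p-1,q} = 0$ and $\del\eta^{p,q-1} = 0$.

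At this point, (ii) applied to $\eta^{p,q-1}$ yields $c_1 \in C^{p,q-1}$ with $\del c_1 = 0$ and $\xi_1 \in A^{p-1,q-1}$ such that $\eta^{p,q-1} = c_1 + \del\xi_1$, and (i) applied to $\eta^{p-1,q}$ yields $c_2 \in C^{p-1,q}$ with $\delbar c_2 = 0$ and $\xi_2 \in A^{p-1,q-1}$ such that $\eta^{p-1,q} = c_2 + \delbar\xi_2$. Substituting into the central equation and using $\delbar\del = -\del\delbar$,
$$ \alpha - \gamma \;=\; \del c_2 + \delbar c_1 + \del\delbar\left(\xi_2 - \xi_1\right) \;. $$
Setting $\tilde\gamma := \gamma + \del c_2 + \delbar c_1 \in C^{p,q}$, the closedness relations $\del c_1 = 0$ and $\delbar c_2 = 0$ yield $\del\tilde\gamma = \del\delbar c_1 = -\delbar\del c_1 = 0$ and similarly $\delbar\tilde\gamma = 0$, while $\alpha - \tilde\gamma = \del\delbar(\xi_2 - \xi_1) \in \imm\del\delbar$; thus $[\tilde\gamma]_{BC} \in H^{p,q}_{BC}\left(C^{\bullet,\bullet}\right)$ maps to $[\alpha]_{BC}$, giving surjectivity.

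The main obstacle is the inductive zigzag reduction: at each rung one must invoke the relevant isomorphism twice---once to realize a closed form as a $C$-class modulo an exact part, and once to recognize the resulting $C$-valued coboundary as exact in $C$---and one must check that the cascade of $\de$-exact corrections closes up without leaking back into the central bidegrees $(p-1,q)$ and $(p,q-1)$. Boundedness of $A^{\bullet,\bullet}$ ensures that only finitely many iterations are needed; the fact that (i) and (ii) are isomorphisms, not merely surjections, is what allows the double application at every rung.
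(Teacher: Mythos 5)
Your global strategy---invoking hypothesis \emph{(iii)} to write $\alpha=\gamma+\de\eta$ with $\gamma\in C^{p,q}$ $\de$-closed, decomposing $\eta$ into its bigraded zigzag, trimming the two tails, and assembling $\tilde\gamma=\gamma+\del c_2+\delbar c_1$---is the right one, and your final assembly step is verified correctly. The genuine gap is in the trimming. The move you describe for the top rung $\eta_N:=\eta^{p-1+N,q-N}$ with $N\ge 2$ (write it as $c+\del\mu$ with $c\in C\cap\ker\del$, subtract $\de\mu$, then use injectivity of \emph{(ii)} to get $\delbar c=\del c'$ with $c'\in C$) leaves the rung equal to $c$: it puts the rung \emph{into} $C$ but does not remove it, so iterating it does not yield the claimed reduction to $\eta=\eta^{p-1,q}+\eta^{p,q-1}$. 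To actually delete the rung one must subtract something like $c-c'$, but $\de(c-c')=-\delbar c'$ sits in bidegree $(p+N-2,q-N+2)$, so this changes $\de\eta$ and makes the zigzag equation at the next level inhomogeneous; you name exactly this cascade as ``the main obstacle'' but do not resolve it. Moreover the cascade does \emph{not} close up before reaching the central column: carried to the end it deposits a term $-\delbar c_1$ in bidegree $(p,q)$ itself, where $c_1\in C^{p,q-1}$ satisfies $\del c_1=\delbar \tilde c_2$ for the $C$-valued rung $\tilde c_2\in C^{p+1,q-2}$, so $c_1$ is \emph{not} $\del$-closed and your final application of \emph{(ii)} to $\eta^{p,q-1}$ is not yet licensed.

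What rescues the argument, and what is missing from your write-up, is that this terminal leak is harmless: $\del(\delbar c_1)=-\delbar\del c_1=-\delbar\delbar\tilde c_2=0$ and $\delbar\delbar c_1=0$, so $-\delbar c_1\in C^{p,q}\cap\ker\del\cap\ker\delbar$ and may be absorbed into $\gamma$, replacing it by another admissible representative. Concretely, the inductive lemma you need for the right tail is: for $k=N,N-1,\dots,2$ one can write $\eta_k=\tilde c_k+\del\mu_k'+\delbar\mu_{k+1}'$ with $\tilde c_k\in C$, $\mu_k',\mu_{k+1}'\in A$, and $\delbar\tilde c_k=\del c_{k-1}$ for some $c_{k-1}\in C$ (surjectivity of \emph{(ii)} gives the splitting at each level, injectivity gives the relation $\delbar\tilde c_k=\del c_{k-1}$); pushing this to $k=1$ yields $\delbar\eta^{p,q-1}=\bigl(\delbar c_1''-\delbar c_1\bigr)-\del\delbar\mu_1'$ with $\del c_1''=0$, where $\delbar c_1''-\delbar c_1\in C^{p,q}\cap\ker\del\cap\ker\delbar$ and $\del\delbar\mu_1'\in\imm\del\delbar$. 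The mirror statement for the left tail uses \emph{(i)}. With that lemma supplied, your concluding computation goes through verbatim and the surjectivity follows.
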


As a corollary, we get the following result concerning cohomology computations for bi-differential $\Z$-graded complexes.

\begin{cor}\label{ives}
Let $\left(A^{\bullet},\, \del,\, \delbar\right)$ be  a bounded bi-differential $\Z$-graded complex and $B^{\bullet}\subseteq A^{\bullet}$ be a bi-differential $\Z$-graded sub-complex.
Suppose that:
\begin{enumerate}
\item the cohomologies $H^{\bullet}_{\del}(A^\bullet)$ and $H^{\bullet}_{\delbar}(A^\bullet)$, and $H^{\bullet}_{\del}(B^\bullet)$ and $H^{\bullet}_{\delbar}(B^\bullet)$ are finite-dimensional;
\item the inclusion $\iota \colon B^{\bullet}\hookrightarrow A^{\bullet}$ induces the isomorphisms $H^{\bullet}_{\del}(B^{\bullet}) \stackrel{\simeq}{\to} H^{\bullet}_{\del}(A^{\bullet})$ and $H^{\bullet}_{\delbar}(B^{\bullet}) \stackrel{\simeq}{\to} H^{\bullet}_{\delbar}(A^{\bullet})$;
\item there exists a map $\mu \colon A^{\bullet} \to B^{\bullet}$ of bi-differential $\Z$-graded complexes such that $\mu\circ \iota=\id_{B^\bullet}$.
\end{enumerate}
Then the inclusion $\iota \colon B^{\bullet}\hookrightarrow A^{\bullet}$ induces the isomorphism
\[ H^{\bullet}_{BC}(B^{\bullet}) \stackrel{\simeq}{\to} H^{\bullet}_{BC}(A^{\bullet}) \;. \]
\end{cor}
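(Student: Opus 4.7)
The plan is to reduce to the double-complex setting of Theorem \ref{surjBC} via the doubling construction $\Doub$ recalled above. We set $\tilde{C}^{\bullet,\bullet} := \Doub^{\bullet,\bullet} B^\bullet$ and $\tilde{A}^{\bullet,\bullet} := \Doub^{\bullet,\bullet} A^\bullet$; since both $\iota$ and $\mu$ preserve the $\Z$-grading and commute with $\del$ and $\delbar$, they lift to morphisms of double complexes $\Doub\iota \colon \tilde{C}^{\bullet,\bullet} \hookrightarrow \tilde{A}^{\bullet,\bullet}$ and $\Doub\mu \colon \tilde{A}^{\bullet,\bullet} \to \tilde{C}^{\bullet,\bullet}$ with $\Doub\mu \circ \Doub\iota = \id$. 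The natural bidegree isomorphisms $H^{\bullet_1,\bullet_2}_{\sharp}(\Doub A^\bullet) \simeq H^{\bullet_1-\bullet_2}_{\sharp}(A^\bullet) \otimes_{\K} \K\beta^{\bullet_2}$ recalled in \S\ref{subsec:doub} then translate hypothesis (ii) of the corollary directly into hypotheses (i) and (ii) of Theorem \ref{surjBC} for the pair $(\tilde{C}^{\bullet,\bullet}, \tilde{A}^{\bullet,\bullet})$.

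The main work is to verify hypothesis (iii) of Theorem \ref{surjBC}. First, since $\Doub\iota$ induces isomorphisms on the $\delbar$-cohomology at every bidegree, a morphism-of-spectral-sequences argument applied to the first Fr\"olicher spectral sequence $'E$ (an isomorphism on the $E_1$-page forces an isomorphism at the bounded abutment) shows that the induced map $\Doub\iota_* \colon H^{\bullet}_{dR}(\tilde{C}^{\bullet,\bullet}) \to H^{\bullet}_{dR}(\tilde{A}^{\bullet,\bullet})$ is itself an isomorphism. Together with $\Doub\mu \circ \Doub\iota = \id$, this forces $\Doub\mu_*$ to be the two-sided inverse of $\Doub\iota_*$, so that $\Doub\iota \circ \Doub\mu$ acts as the identity on $H^{\bullet}_{dR}(\tilde{A}^{\bullet,\bullet})$. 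Then, given any $a \in \tilde{A}^{p,q}$ with $\de a = 0$, the bidegree-preserving element $b := \Doub\mu(a) \in \tilde{C}^{p,q}$ is $\de$-closed, and $a - \Doub\iota(b) = \de \eta$ for some $\eta \in \Tot^{p+q-1}\tilde{A}^{\bullet,\bullet}$; this is exactly the surjectivity required by hypothesis (iii).

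With all three hypotheses in place, Theorem \ref{surjBC} yields the surjectivity of the induced map $H^{\bullet,\bullet}_{BC}(\tilde{C}^{\bullet,\bullet}) \to H^{\bullet,\bullet}_{BC}(\tilde{A}^{\bullet,\bullet})$. Injectivity is then essentially free: by functoriality of $H_{BC}$, the identity $\Doub\mu \circ \Doub\iota = \id_{\tilde{C}^{\bullet,\bullet}}$ descends to Bott-Chern cohomology, so $\Doub\iota_*$ is also injective. Translating back via the bidegree isomorphism $H^{p,q}_{BC}(\Doub A^\bullet) \simeq H^{p-q}_{BC}(A^\bullet) \otimes_{\K} \K\beta^{q}$ (and the analogous one for $B^\bullet$) yields the desired isomorphism $H^{\bullet}_{BC}(B^\bullet) \stackrel{\simeq}{\to} H^{\bullet}_{BC}(A^\bullet)$. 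The main obstacle is the verification of hypothesis (iii): without the retraction $\mu$, the isomorphisms on $H_\del$ and $H_\delbar$ alone would give only a de Rham equivalence at the level of classes, not at the level of representatives of the prescribed bidegree, and it is precisely $\mu$ that promotes this class-level identity to the element-level identity required by the theorem.
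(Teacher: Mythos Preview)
Your proof is correct and follows essentially the same route as the paper's: pass to the doubled double complex, lift $\iota$ and $\mu$, verify hypotheses (i)--(ii) of Theorem~\ref{surjBC} from the cohomology isomorphisms, obtain the de Rham isomorphism via the Fr\"olicher spectral sequence, use that $\Doub\mu$ preserves bidegree and inverts $\Doub\iota$ on de Rham cohomology to establish hypothesis (iii), and then read off injectivity from the retraction and translate back via $H^{p,q}_{BC}(\Doub A^\bullet)\simeq H^{p-q}_{BC}(A^\bullet)\otimes_\K\K\beta^q$. The only cosmetic difference is that the paper records injectivity first and phrases the verification of (iii) as exhibiting $\mu\otimes\id$ as an explicit inverse on the ``pure-bidegree de Rham'' quotients, whereas you phrase it as producing a specific preimage $b=\Doub\mu(a)$; the content is identical.
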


\begin{proof}
We have the induced map $\iota \colon H^{\bullet}_{BC}(B^\bullet)\to H^{\bullet}_{BC}(A^\bullet)$ and $\mu \colon H^{\bullet}_{BC}(A^\bullet) \to H^{\bullet}_{BC}(B^\bullet)$ such that $\mu \circ \iota = \id$.
Hence $\iota \colon H^{\bullet}_{BC}(B^\bullet) \to H^{\bullet}_{BC}(A^\bullet)$ is injective.

We prove the surjectivity of the map $\iota\colon H^{\bullet}_{BC}(B^\bullet) \to H^{\bullet}_{BC}(A^\bullet)$ by using Theorem \ref{surjBC}.
Consider the double complex $( \Doub^{\bullet_1,\bullet_2} A^\bullet \;:=\; A^{\bullet_1-\bullet_2} \otimes_\K \K\beta^{\bullet_2},   \del\otimes_\K\id, \delbar\otimes_\K\beta )$, as in Section \ref{subsec:doub}, and the double sub-complex $\Doub^{\bullet,\bullet} B^\bullet$.
Then, by the assumption, the inclusion $\iota\otimes \id \colon \Doub^{\bullet,\bullet} B^\bullet \hookrightarrow \Doub^{\bullet,\bullet} A^\bullet $ induces the isomorphisms
$$ H^{\bullet,\bullet}_{ \del\otimes_\K\id}(\Doub^{\bullet,\bullet} B^\bullet) \stackrel{\simeq}{\to} H^{\bullet,\bullet}_{ \del\otimes_\K\id}(\Doub^{\bullet,\bullet} A^\bullet) $$
and
$$ H^{\bullet,\bullet}_{ \delbar\otimes_\K\beta}(\Doub^{\bullet,\bullet} B^\bullet) \stackrel{\simeq}{\to} H^{\bullet,\bullet}_{ \delbar\otimes_\K\beta}(\Doub^{\bullet,\bullet} A^\bullet) \;. $$
Considering the spectral sequences of double complexes, by \cite[Theorem 3.5]{mccleary}, the inclusion $\iota\otimes \id \colon \Doub^{\bullet,\bullet} B^\bullet \hookrightarrow \Doub^{\bullet,\bullet} A^\bullet $ induces the isomorphism
\[H^{\bullet}_{dR} ({\Tot}^{\bullet}\Doub^{\bullet,\bullet} B^\bullet) \stackrel{\simeq}{\to} H^{\bullet}_{dR} ({\Tot}^{\bullet}\Doub^{\bullet,\bullet} A^\bullet) \;. 
\]

We prove that, for any $(p,q)\in\Z^2$, the induced map
 \begin{eqnarray*}
\iota\otimes \id\colon \lefteqn{ \frac{\ker\left(\de\colon \Tot^{p+q}\left(\Doub^{\bullet,\bullet} B^{\bullet} \right) \to \Tot^{p+q+1}\left(\Doub^{\bullet,\bullet}\wedge 
B^{\bullet}\right) \right) \cap \Doub^{p,q}B^{\bullet}}{\imm\left(\de\colon \Tot^{p+q-1}\left(\Doub^{\bullet,\bullet}B^{\bullet}\right) \to \Tot^{p+q}\left(\Doub^{\bullet,\bullet}B^{\bullet}\right)\right)} } \\[5pt]
 &\to& \frac{\ker\left(\de\colon \Tot^{p+q}\left(\Doub^{\bullet,\bullet}A^{\bullet}\right) \to \Tot^{p+q+1}\left(\Doub^{\bullet,\bullet}A^{\bullet}\right) \right) \cap \Doub^{p,q}A^{\bullet}}{\imm\left(\de\colon \Tot^{p+q-1}\left(\Doub^{\bullet,\bullet}A^{\bullet}\right) \to \Tot^{p+q}\left(\Doub^{\bullet,\bullet}A^{\bullet}\right)\right)}
 \end{eqnarray*}
 is surjective.

Consider the map ${\mu}\otimes \id \colon \Doub^{\bullet,\bullet} B^\bullet \rightarrow \Doub^{\bullet,\bullet} A^\bullet$.
This map is a homomorphism of double complexes.
Then, by the assumption that $\mu\circ\iota=\id$, for the induced map
\[\mu\otimes \id \colon H^{\bullet}_{dR} \left({\Tot}^{\bullet}\Doub^{\bullet,\bullet} A^\bullet\right) \to H^{\bullet}_{dR} \left({\Tot}^{\bullet}\Doub^{\bullet,\bullet} B^\bullet\right) \;, \]
we have $\left(\mu\otimes \id\right) \circ \left(\iota\otimes \id\right)={\id}$.
Since $\iota\otimes \id \colon H^{\bullet}_{dR} \left({\Tot}^{\bullet}\Doub^{\bullet,\bullet} B^\bullet\right) \stackrel{\simeq}{\to} H^{\bullet}_{dR} \left({\Tot}^{\bullet}\Doub^{\bullet,\bullet} A^\bullet\right)$ is an isomorphism, $\mu\otimes \id \colon H^{\bullet}_{dR} \left({\Tot}^{\bullet}\Doub^{\bullet,\bullet} A^\bullet\right) \to H^{\bullet}_{dR} \left({\Tot}^{\bullet}\Doub^{\bullet,\bullet} B^\bullet\right)$ is its inverse map.
Consider
\[ \frac{\ker\left(\de\colon \Tot^{p+q}\left(\Doub^{\bullet,\bullet} B^{\bullet} \right) \to \Tot^{p+q+1}\left(\Doub^{\bullet,\bullet}
B^{\bullet}\right) \right) \cap \Doub^{p,q}B^{\bullet}}{\imm\left(\de\colon \Tot^{p+q-1}\left(\Doub^{\bullet,\bullet}B^{\bullet}\right) \to \Tot^{p+q}\left(\Doub^{\bullet,\bullet}B^{\bullet}\right)\right)} \subseteq H^{\bullet}_{dR} \left({\Tot}^{\bullet}\Doub^{\bullet_1,\bullet_2} B^\bullet\right)
\]
and 
\[ \frac{\ker\left(\de\colon \Tot^{p+q}\left(\Doub^{\bullet,\bullet} A^{\bullet} \right) \to \Tot^{p+q+1}\left(\Doub^{\bullet,\bullet}
A^{\bullet}\right) \right) \cap \Doub^{p,q}A^{\bullet}}{\imm\left(\de\colon \Tot^{p+q-1}\left(\Doub^{\bullet,\bullet}A^{\bullet}\right) \to \Tot^{p+q}\left(\Doub^{\bullet,\bullet}A^{\bullet}\right)\right)} \subseteq H^{\bullet}_{dR} \left({\Tot}^{\bullet}\Doub^{\bullet_1,\bullet_2} A^\bullet\right) \;.
\]
By $\mu\otimes\id (\Doub^{p,q}A^{\bullet})\subseteq \Doub^{p,q}B^{\bullet}$, 
the induced map
 \begin{eqnarray*}
\iota\otimes \id\colon \lefteqn{ \frac{\ker\left(\de\colon \Tot^{p+q}\left(\Doub^{\bullet,\bullet} B^{\bullet} \right) \to \Tot^{p+q+1}\left(\Doub^{\bullet,\bullet}\wedge 
B^{\bullet}\right) \right) \cap \Doub^{p,q}B^{\bullet}}{\imm\left(\de\colon \Tot^{p+q-1}\left(\Doub^{\bullet,\bullet}B^{\bullet}\right) \to \Tot^{p+q}\left(\Doub^{\bullet,\bullet}B^{\bullet}\right)\right)} } \\[5pt]
 &\to& \frac{\ker\left(\de\colon \Tot^{p+q}\left(\Doub^{\bullet,\bullet}A^{\bullet}\right) \to \Tot^{p+q+1}\left(\Doub^{\bullet,\bullet}A^{\bullet}\right) \right) \cap \Doub^{p,q}A^{\bullet}}{\imm\left(\de\colon \Tot^{p+q-1}\left(\Doub^{\bullet,\bullet}A^{\bullet}\right) \to \Tot^{p+q}\left(\Doub^{\bullet,\bullet}A^{\bullet}\right)\right)}
 \end{eqnarray*}
is an isomorphism with the inverse map 
 \begin{eqnarray*}
\mu \otimes \id\colon \lefteqn{ \frac{\ker\left(\de\colon \Tot^{p+q}\left(\Doub^{\bullet,\bullet} A^{\bullet} \right) \to \Tot^{p+q+1}\left(\Doub^{\bullet,\bullet}\wedge 
A^{\bullet}\right) \right) \cap \Doub^{p,q}A^{\bullet}}{\imm\left(\de\colon \Tot^{p+q-1}\left(\Doub^{\bullet,\bullet}A^{\bullet}\right) \to \Tot^{p+q}\left(\Doub^{\bullet,\bullet}A^{\bullet}\right)\right)} } \\[5pt]
 &\to& \frac{\ker\left(\de\colon \Tot^{p+q}\left(\Doub^{\bullet,\bullet}B^{\bullet}\right) \to \Tot^{p+q+1}\left(\Doub^{\bullet,\bullet}B^{\bullet}\right) \right) \cap \Doub^{p,q}B^{\bullet}}{\imm\left(\de\colon \Tot^{p+q-1}\left(\Doub^{\bullet,\bullet}B^{\bullet}\right) \to \Tot^{p+q}\left(\Doub^{\bullet,\bullet}B^{\bullet}\right)\right)}.
 \end{eqnarray*}

By Theorem \ref{surjBC}, for any $(p,q)\in\Z^2$, the induced map
\[\iota\otimes \id\colon H_{BC}^{p - q}\left(B^\bullet\right) \otimes_\K \K\beta^{q} \simeq H^{p,q}_{BC}(\Doub^{\bullet,\bullet}B^{\bullet})\to H^{p,q}_{BC}(\Doub^{\bullet,\bullet}A^{\bullet}) \simeq H_{BC}^{p - q}\left(A^\bullet\right) \otimes_\K \K\beta^{q}
\]
 is surjective and so $H^{p,q}_{BC}(B^{\bullet})\to H^{p,q}_{BC}(A^{\bullet})$ is surjective.
Hence the corollary follows.
\end{proof}

\subsection{Hard Lefschetz condition for bi-differential $\Z$-graded complexes}
We recall here some definitions and results concerning $\slf_2(\C)$-representations, and Hard Lefschetz Condition for bi-differential $\Z$-graded complexes; we refer to \cite{yan}.

\medskip

Consider a (possibly non-finite-dimensional) $\C$-vector space $V$. Let $\phi\colon \slf_{2}(\C) \to  \End(V)$ be a representation of the Lie algebra $\slf_{2}(\C)$.
Take the basis
\[ X \;=\; \left(
\begin{array}{cc}
0&1\\
0&0
\end{array}
\right) \;, \qquad Y \;=\; \left(
\begin{array}{cc}
0&0\\
1&0
\end{array}
\right) \qquad \text{ and } \qquad Z \;=\; \left(
\begin{array}{cc}
-1&0\\
0&1
\end{array}
\right)
\]
of $\slf_{2}(\C)$.
We recall that $\phi$ is said of {\em finite $Z$-spectrum}, \cite[Definition 2.2]{yan}, if
\begin{enumerate}
\item $V$ can be decomposed as the direct sum of the eigenspaces of $\phi(Z)$, and
\item $\phi(Z)$ has only finitely many distinct eigenvalues.
\end{enumerate}
For $\slf_2(\C)$-representations of finite $Z$-spectrum, one has the following results, \cite[Corollary 2.4, Corollary 2.5, Corollary 2.6]{yan}:
\begin{enumerate}
\item all eigenvalues of $\phi(Z)$ are integers;
\item consider, for $k\in\Z$, the eigenspace $V_{k}$ of $\phi(Z)$ with respect to the eigenvalue $k$; for any $k\in\N$, the maps
$\phi(X)^{k}\colon V_{k}\to V_{-k}$ and $\phi(Y)^{k}\colon V_{-k}\to V_{k}$ are isomorphisms;
\item consider, for $k\in\Z$, the set $P_{k} = \left\{ v\in V_{k} \st \phi(X)v=0 \right\} = \left\{ v\in V_{k} \st \phi(Y)^{k+1}v=0 \right\}$ of primitive elements; then one has the following decompositions:
\begin{enumerate}
\item $V_{k} = P_{k} \oplus \imm \phi(X)$ for any $k\in\Z$;
\item $V_{k} = \bigoplus_{j\in\N} \phi(X)^j\left(P_{k+2j}\right)$ for any $k\in\N$;
\item $V_{-k} = \bigoplus_{j\in\N} \phi(X)^{k+j}\left(P_{k+2j}\right)$ for any $k\in\N\setminus\{0\}$.
\end{enumerate}
\end{enumerate}

\medskip

Now let $\left(A^{\bullet},\, \del,\, \delbar\right)$ be a bounded bi-differential $\Z$-graded complex such that $A^{k}=\{0\}$ for $k<0$ or $k>2n$, for some $n\in\N\setminus\{0\}$.
We define $H\in {\rm End}^{0}(A^{\bullet})$ as $H=\sum_{k\in \Z} (n-k)\pi_{A^{k}}$ where  $\pi_{A^{k}}:A^{\bullet}\to A^{k}$ denotes the projection.
We suppose that we have operators $L\in {\rm End}^{2}(A^{\bullet})$ and $\Lambda\in {\rm End}^{-2}(A^{\bullet})$
satisfying the following relations:
\begin{equation}\label{eq:comm-del-sympl}
\left[\del,L\right] \;=\; 0 \;, \qquad \left[\delbar, L\right] \;=\; -\del \;, \qquad  \left[\delbar, \Lambda\right]=0 \;, \qquad \left[\del, \Lambda\right] \;=\; \delbar \;,
\end{equation}
and
\begin{equation}\label{eq:sl2}
\left[\Lambda,L\right] \;=\; H \;, \qquad \left[L, H\right] \;=\; 2L \;, \qquad \left[\Lambda, H\right] \;=\; -2\Lambda \;.
\end{equation}
Then we have the representation $\phi\colon \slf_{2}(\C)\to \End(A^{\bullet})$ of finite $Z$-spectrum given by
$$ \phi(X) \;=\; \Lambda \;, \qquad \phi(Y) \;=\; L \;, \qquad \text{ and } \phi(Z) \;=\; H \;. $$
Hence, in particular, $L^{k}\colon A^{n-k}\to A^{n+k}$ is an isomorphism, for any $k\in\{0,\ldots,n\}$.

An element $\alpha\in A^{\bullet}$ is called {\em harmonic} if $\del\alpha=0$ and $\delbar \alpha=0$.
Let $H_{\text{hr}}^{\bullet}(A^{\bullet})$ be the space of all the harmonic elements.
Then, by the above relations, $H_{\text{hr}}^{\bullet}(A^{\bullet})$ is a $\slf_{2}(\C)$-submodule of finite $Z$-spectrum.
Hence, in particular, $L^{k}:H_{\text{hr}}^{n-k}(A^{\bullet})\to H_{\text{hr}}^{n+k}(A^{\bullet})$ is an isomorphism, for any $k\in\{0,\ldots,n\}$.

The same argument in \cite[Section 3]{yan} still works in yielding the following result.

\begin{thm}[{see \cite[Theorem 0.1]{yan}}]\label{har}
Let $\left(A^{\bullet},\, \del,\, \delbar\right)$ be a bounded bi-differential $\Z$-graded complex such that $A^{k}=\{0\}$ for $k<0$ or $k>2n$, for some $n\in\N\setminus\{0\}$, and that \eqref{eq:comm-del-sympl} and \eqref{eq:sl2} hold.
The following conditions are equivalent:
\begin{enumerate}
\item the inclusion $H_{\text{hr}}^{\bullet}(A^{\bullet}) \hookrightarrow A^{\bullet}$ induces the surjection $H_{\text{hr}}^{\bullet}(A^{\bullet})\to H_{\del}^{\bullet}(A^{\bullet})$;
\item for any $k\in\{0,\ldots,n\}$, the induced map $L^{k}\colon H^{n-k}_{\del}(A^{\bullet})\to H^{n+k}_{\del}(A^{\bullet})$ is surjective.
\end{enumerate}
\end{thm}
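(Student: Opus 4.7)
The plan is to adapt the purely algebraic argument of \cite[\S3]{yan} to our abstract bi-differential $\Z$-graded complex. All the structure used in Yan's proof — the representation $\phi\colon\slf_2(\C)\to\End(A^{\bullet})$ of finite $Z$-spectrum, the commutation relations \eqref{eq:comm-del-sympl}--\eqref{eq:sl2}, and the resulting Lefschetz/primitive decomposition — is already available by hypothesis, so no geometric input is needed beyond what is stated.

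\textbf{Direction (i) $\Rightarrow$ (ii).} This is the straightforward implication. First observe, from \eqref{eq:comm-del-sympl}, that both $L$ and $\Lambda$ preserve $H_{\text{hr}}^{\bullet}(A^{\bullet})$: if $\del\alpha=\delbar\alpha=0$, then $\del L\alpha=L\del\alpha=0$ and $\delbar L\alpha = L\delbar\alpha-\del\alpha=0$, and similarly for $\Lambda$. Consequently $H_{\text{hr}}^{\bullet}(A^{\bullet})$ inherits the structure of an $\slf_2(\C)$-submodule of finite $Z$-spectrum, so that $L^k\colon H_{\text{hr}}^{n-k}(A^{\bullet})\to H_{\text{hr}}^{n+k}(A^{\bullet})$ is an isomorphism for $k\in\{0,\dots,n\}$. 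Since $[\del,L]=0$, the operator $L^k$ descends to $\del$-cohomology; composing the isomorphism on harmonic elements with the surjection $H_{\text{hr}}^{\bullet}(A^{\bullet}) \twoheadrightarrow H_{\del}^{\bullet}(A^{\bullet})$ from (i) then yields the surjectivity of $L^k\colon H^{n-k}_{\del}(A^{\bullet})\to H^{n+k}_{\del}(A^{\bullet})$ asserted in (ii).

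\textbf{Direction (ii) $\Rightarrow$ (i).} This is the technical direction and is carried out by an inductive procedure along the primitive decomposition. Given $\alpha\in A^{k}$ with $\del\alpha=0$, the goal is to produce $\beta\in A^{k-1}$ such that $\alpha-\del\beta$ is also $\delbar$-closed (hence harmonic). The key formal identity is that, since $[\del,\Lambda]=\delbar$, one has $\delbar\alpha=\del(\Lambda\alpha)-\Lambda\del\alpha=\del(\Lambda\alpha)$, so $\delbar\alpha$ is automatically $\del$-exact; the question is whether it can be written as $\del\delbar\beta$. Writing $\alpha=\sum_{j\geq 0}L^{j}\alpha_{j}$ in the (finite) primitive decomposition, one inductively modifies the primitive components $\alpha_j$: the hypothesis (ii), combined with the fact that $L^{s}\colon H_{\del}^{n-s}\to H_{\del}^{n+s}$ is surjective, is precisely what is needed to solve the auxiliary $\del$-cohomology equations that arise when one tries to adjust $\alpha$ by a $\del$-exact form so as to kill $\delbar\alpha$ component by component.

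\textbf{Main obstacle.} The substantive difficulty is book-keeping, since $\del$ does not preserve primitivity: if $\alpha_j\in P^{k-2j}$ then $\Lambda\del\alpha_j=\delbar\alpha_j$ is generally non-zero, so $\del\alpha_j$ has non-trivial components across several Lefschetz levels. The induction must therefore be organized by decreasing top primitive degree so that corrections introduced at one level do not spoil those already achieved; the finiteness of the $Z$-spectrum guarantees that the procedure terminates after finitely many steps. This combinatorial scheme is exactly the one implemented in \cite[\S3]{yan}, and since it uses only the $\slf_{2}(\C)$-module structure together with relations \eqref{eq:comm-del-sympl}--\eqref{eq:sl2}, it transfers verbatim to our abstract bi-differential complex.
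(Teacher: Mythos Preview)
Your proposal is correct and matches the paper's approach exactly: the paper simply states that ``the same argument in \cite[Section 3]{yan} still works'' without giving any further detail, and your sketch spells out precisely why Yan's purely algebraic proof transfers to the abstract bi-differential $\Z$-graded setting once the $\slf_2(\C)$-module structure and relations \eqref{eq:comm-del-sympl}--\eqref{eq:sl2} are in place. If anything, your write-up is more informative than the paper's one-line justification.
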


\begin{rem}
Note that the surjectivity of $H_{\text{hr}}^{\bullet}(A^{\bullet})\to H_{\del}^{\bullet}(A^{\bullet})$ does not imply the injectivity of $L^{k}\colon H^{n-k}_{\del}(A^{\bullet})\to H^{n+k}_{\del}(A^{\bullet})$.
In the second part of the above theorem, we can not state that $L^{k}\colon H^{n-k}_{\del}(A^{\bullet})\to H^{n+k}_{\del}(A^{\bullet})$ is an isomorphism, unless the cohomology $H^\bullet_{\del}(A^\bullet)$ is self-dual and finite-dimensional.
\end{rem}

In fact, the following result holds.

\begin{cor}\label{sel}
Let $\left(A^{\bullet},\, \del,\, \delbar\right)$ be a bounded bi-differential $\Z$-graded complex such that $A^{k}=\{0\}$ for $k<0$ or $k>2n$, for some $n\in\N\setminus\{0\}$, and that \eqref{eq:comm-del-sympl} and \eqref{eq:sl2} hold.
Furthermore, assume that the cohomology $H^{\bullet}_{\del}(A^{\bullet})$ is finite-dimensional and, for each $k\in\{0,\ldots,n\}$, we have $\dim H^{n-k}_{\del}(A^{\bullet})=\dim H^{n+k}_{\del}(A^{\bullet})$.
Then the following conditions are equivalent:
\begin{enumerate}
\item the inclusion $H_{\text{hr}}^{\bullet}(A^{\bullet}) \hookrightarrow A^{\bullet}$ induces the surjection $H_{\text{hr}}^{\bullet}(A^{\bullet})\to H_{\del}^{\bullet}(A^{\bullet})$;
\item for any $k\in\{0,\ldots,n\}$, the induced map $L^{k}\colon H^{n-k}_{\del}(A^{\bullet})\to H^{n+k}_{\del}(A^{\bullet})$ is an isomorphism.
\end{enumerate}
\end{cor}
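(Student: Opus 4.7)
The plan is to reduce this corollary directly to Theorem \ref{har} by using the extra hypotheses (finite-dimensionality of $H_\del^\bullet$ and the symmetry $\dim H^{n-k}_\del = \dim H^{n+k}_\del$) only as a purely linear-algebraic upgrade from surjectivity to bijectivity.

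First, I would observe that Theorem \ref{har} already establishes the equivalence between the surjectivity of $H^\bullet_{\text{hr}}(A^\bullet) \to H^\bullet_\del(A^\bullet)$ and the surjectivity of $L^k \colon H^{n-k}_\del(A^\bullet) \to H^{n+k}_\del(A^\bullet)$ for every $k \in \{0,\ldots,n\}$. Hence, to prove the corollary, it suffices to show that, under the present hypotheses, the surjectivity of each $L^k$ is equivalent to its being an isomorphism.

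The direction ``isomorphism implies surjection'' is trivial. For the converse, I would argue as follows: since $H^\bullet_\del(A^\bullet)$ is finite-dimensional and $\dim H^{n-k}_\del(A^\bullet) = \dim H^{n+k}_\del(A^\bullet)$ for every $k\in\{0,\ldots,n\}$, each surjective linear map $L^k \colon H^{n-k}_\del(A^\bullet) \to H^{n+k}_\del(A^\bullet)$ between finite-dimensional vector spaces of the same dimension is automatically injective, hence bijective.

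There is essentially no obstacle here; the role of the added hypotheses is just to supply the self-duality at the level of dimensions that was flagged in the Remark preceding the statement. Combining these two observations with Theorem \ref{har} gives the equivalence (i) $\Leftrightarrow$ (ii) in the corollary.
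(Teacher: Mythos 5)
Your proposal is correct and is exactly the intended argument: the paper gives no written proof for this corollary, but the preceding Remark makes clear it is meant to follow from Theorem \ref{har} together with the elementary fact that a surjection between finite-dimensional vector spaces of equal dimension is an isomorphism. Nothing is missing.
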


\medskip

Arguing by induction as in \cite[Proposition 5.4]{cavalcanti}, thanks to the above relations for $\del$, $\delbar$, $L$, and $\Lambda$, we have the following result.

\begin{prop}[{see \cite[Proposition 5.4]{cavalcanti}}]\label{del12}
Let $\left(A^{\bullet},\, \del,\, \delbar\right)$ be a bounded bi-differential $\Z$-graded complex such that $A^{k}=\{0\}$ for $k<0$ or $k>2n$, for some $n\in\N\setminus\{0\}$, and that \eqref{eq:comm-del-sympl} and \eqref{eq:sl2} hold.
Suppose that for any $k \in \{0,\ldots,n\}$, the induced map $L^{k}\colon H^{n-k}_{\del}(A^{\bullet})\to H^{n+k}_{\del}(A^{\bullet})$ is an isomorphism.
Moreover, suppose that, for $\alpha\in A^{1}$, if $\del\delbar\alpha=0$, then $\delbar \alpha$ is $\del$-exact.
Then we have
$$\imm\delbar\cap \ker \del=\imm \del\cap \imm \delbar.
$$
\end{prop}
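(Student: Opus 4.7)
My plan is to prove by induction on $k=|\alpha|$ the statement: for every $\alpha\in A^{k}$ with $\del\delbar\alpha=0$, $\delbar\alpha\in\imm\del$. Since $\imm\del\cap\imm\delbar\subseteq\imm\delbar\cap\ker\del$ holds trivially, this implies the claimed equality. The base case $k=1$ is precisely the second hypothesis of the proposition. For the inductive step at $k\geq 2$, I begin with a $\Lambda$-reduction: using $[\delbar,\Lambda]=0$ and $[\del,\Lambda]=\delbar$,
\[
\del\delbar(\Lambda\alpha)\;=\;\del\Lambda\delbar\alpha\;=\;\Lambda\del\delbar\alpha+\delbar^{2}\alpha\;=\;0,
\]
so the inductive hypothesis applied to $\Lambda\alpha\in A^{k-2}$ yields $\Lambda\delbar\alpha=\delbar\Lambda\alpha\in\imm\del$.

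To bridge from $\Lambda\delbar\alpha\in\imm\del$ to the desired $\delbar\alpha\in\imm\del$, I use Theorem~\ref{har}: the Hard Lefschetz hypothesis guarantees a harmonic representative $\gamma$ of the class $[\delbar\alpha]_{\del}$, so that $\delbar\alpha=\gamma+\del\eta$ for some $\eta\in A^{k-2}$. Applying $\delbar$ gives $\del\delbar\eta=0$; by the inductive hypothesis $\delbar\eta\in\imm\del$, and expanding $\Lambda\del\eta=\del\Lambda\eta-\delbar\eta$ one finds $\Lambda\gamma\in\imm\del$. The argument iterates: writing $\Lambda^{j}\gamma=\del\zeta_{j}$, the identity $\del\delbar\zeta_{j}=-\delbar\del\zeta_{j}=-\delbar\Lambda^{j}\gamma=-\Lambda^{j}\delbar\gamma=0$ holds by harmonicity of $\gamma$, so the inductive hypothesis gives $\delbar\zeta_{j}\in\imm\del$, whence $\Lambda^{j+1}\gamma=\del\Lambda\zeta_{j}-\delbar\zeta_{j}\in\imm\del$. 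Thus $\Lambda^{j}\gamma\in\imm\del$ for every $j\geq 1$.

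A direct check shows that $H^{\bullet}_{\text{hr}}(A^{\bullet})$ is closed under $\Lambda$, $L$ and $H$, hence an $\slf_{2}(\C)$-submodule of finite $Z$-spectrum, so the Lefschetz decomposition $\gamma=\sum_{l}L^{l}\gamma_{l}$ holds with $\gamma_{l}$ primitive harmonic of degree $k-1-2l$ and weight $\lambda_{l}=n-(k-1-2l)$. Choosing $l_{\max}$ maximal in this decomposition, the condition $L^{l_{\max}}\gamma_{l_{\max}}\neq 0$ forces $l_{\max}\leq\lambda_{l_{\max}}$, and since $\Lambda^{l_{\max}}L^{l}=0$ on primitives for $l<l_{\max}$, the standard $\slf_{2}(\C)$ formula gives
\[
\Lambda^{l_{\max}}\gamma\;=\;\Lambda^{l_{\max}}L^{l_{\max}}\gamma_{l_{\max}}\;=\;\frac{l_{\max}!\,\lambda_{l_{\max}}!}{(\lambda_{l_{\max}}-l_{\max})!}\,\gamma_{l_{\max}}
\]
with nonzero scalar; combined with $\Lambda^{l_{\max}}\gamma\in\imm\del$ this forces $\gamma_{l_{\max}}\in\imm\del$, hence $L^{l_{\max}}\gamma_{l_{\max}}\in\imm\del$. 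Applying the same reasoning to $\gamma-L^{l_{\max}}\gamma_{l_{\max}}$ and iterating, every $\gamma_{l}$ is $\del$-exact, so $\gamma\in\imm\del$ and $\delbar\alpha=\gamma+\del\eta\in\imm\del$. The main obstacle of the plan is this final $\slf_{2}(\C)$-theoretic step, and in particular the degenerate case $l_{\max}=0$ (where $\gamma$ is itself primitive and the $\Lambda^{j}$-iteration yields no information); handling it requires an auxiliary argument, e.g.\ applying the HLC isomorphism $L^{n-k+1}\colon H^{k-1}_{\del}\to H^{2n-k+1}_{\del}$ to the class $[\gamma]$ and using a further round of $\Lambda$-reduction on $L^{n-k+1}\alpha$ to exhibit $L^{n-k+1}\gamma$ as $\del$-exact.
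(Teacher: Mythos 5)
Your overall scheme --- reducing the statement to ``$\del\delbar\alpha=0\Rightarrow\delbar\alpha\in\imm\del$'', inducting upward on $k=|\alpha|$ with the degree-one hypothesis as base case, passing to a harmonic representative $\gamma$ of $[\delbar\alpha]_{\del}$, and establishing $\Lambda^{j}\gamma\in\imm\del$ for $j\geq1$ --- is sound, and those steps are carried out correctly (the paper itself gives no details for this proposition, deferring entirely to \cite[Proposition 5.4]{cavalcanti}). The problem is the step you yourself flag: the primitive component of $\gamma$. This is not a degenerate corner case but the heart of the matter. For every $k\leq n+1$ the harmonic representative can have a nonzero primitive part --- for $k=2$ it is \emph{always} primitive, since $\Lambda$ kills $A^{1}$ for degree reasons --- and on such a component the chain $\Lambda^{j}\gamma\in\imm\del$, $j\geq1$, carries no information. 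So the first unproved instance of your induction is already $k=2$.

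The suggested repair does not close the gap. Writing $L^{n-k+1}\delbar\alpha=\delbar\left(L^{n-k+1}\alpha\right)+(n-k+1)\,\del\left(L^{n-k}\alpha\right)$ and invoking injectivity of $L^{n-k+1}\colon H^{k-1}_{\del}(A^\bullet)\to H^{2n-k+1}_{\del}(A^\bullet)$ correctly reduces the problem to showing $\delbar\left(L^{n-k+1}\alpha\right)\in\imm\del$, where $L^{n-k+1}\alpha\in A^{2n-k+2}$ satisfies $\del\delbar\left(L^{n-k+1}\alpha\right)=0$. But $2n-k+2\geq k$ precisely when $k\leq n+1$, i.e.\ exactly in the cases where a primitive component can occur, so this is an instance of the claim in degree $\geq k$ and lies outside your upward induction hypothesis; for $k=n+1$ one has $L^{n-k+1}=\id$ and the reduction is literally circular. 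A ``further round of $\Lambda$-reduction'' lowers the degree by $2$ at a time but reintroduces the same obstruction each time one climbs back up with $L$, so the induction as organized cannot close. Handling this middle-degree/primitive case is precisely the delicate point of the implication from the Hard Lefschetz Condition to the $\del\delbar$-Lemma (the gap in Merkulov's original argument repaired by Guillemin and by Cavalcanti), and it requires a genuinely different ingredient --- for instance the pointwise Lefschetz decomposition of $\del\alpha$ itself combined with the identity $\delbar=\del\Lambda-\Lambda\del$, as in \cite[Proposition 5.4]{cavalcanti} --- rather than a rearrangement of the induction you have set up.
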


\subsection{$\del\delbar$-Lemma for bi-differential $\Z$-graded complexes}

In this section, we prove some results concerning $\del\delbar$-Lemma for bi-differential $\Z$-graded complexes, in relation with the Hard Lefschetz Condition.

\medskip

As for the following result, compare also \cite[Theorem 4.3]{cavalcanti} and \cite[Proposition 2]{fernandez-Munoz-Ugarte}.

\begin{prop}\label{ddlam}
Let $\left(A^{\bullet},\, \del,\, \delbar\right)$ be  a bounded bi-differential $\Z$-graded complex such that $A^{k}=\{0\}$ for $k<0$ or $k>2n$, for some $n\in\N\setminus\{0\}$.
Suppose that:
\begin{enumerate}
\item the inclusion $H_{\text{hr}}^{\bullet}(A^{\bullet})\subseteq A^{\bullet}$ induces the surjection $H_{\text{hr}}^{\bullet}(A^{\bullet})\to H_{\del}^{\bullet}(A^{\bullet})$;
\item $\imm\delbar\cap\ker\del=\imm\del\cap\ker \delbar$.
\end{enumerate}
Then we have $\imm \delbar\cap\ker\del=\imm \del\delbar=\imm \del\cap\ker \delbar$.
\end{prop}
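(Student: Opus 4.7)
The plan is to prove the chain $\imm\delbar\cap\ker\del \subseteq \imm\del\delbar \subseteq \imm\del\cap\ker\delbar$; the right-hand inclusion is immediate from $\del^2=0$ and $\del\delbar=-\delbar\del$, and by hypothesis (ii) the two outer spaces coincide, so everything reduces to establishing the single inclusion $\imm\delbar\cap\ker\del \subseteq \imm\del\delbar$. Fix $\alpha=\delbar\beta\in A^k$ with $\del\alpha=0$; by (ii) we also have $\delbar\alpha=0$.

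First, I carry out a \emph{descent}: using hypothesis (ii) iteratively, I construct a sequence $\beta_0:=\beta$, $\beta_1,\beta_2,\ldots$ with $\beta_i\in A^{k+1+2i}$ and $\del\beta_i=\delbar\beta_{i+1}$. At each stage $\delbar\del\beta_i = -\del\delbar\beta_i = -\del\del\beta_{i-1} = 0$ (with the convention $\del\beta_{-1}:=\alpha$), so $\del\beta_i\in\imm\del\cap\ker\delbar$, which by (ii) equals $\imm\delbar\cap\ker\del$; hence some $\beta_{i+1}$ with $\del\beta_i=\delbar\beta_{i+1}$ exists. Because $A^{j}=\{0\}$ for $j>2n$, the sequence terminates at some $\beta_N=0$, forcing $\del\beta_{N-1}=\delbar\beta_N=0$, so $\beta_{N-1}\in\ker\del$.

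Next, I \emph{ascend}, producing $\del$-closed replacements $\tilde\beta_i\in\ker\del$ with $\tilde\beta_i-\beta_i\in\imm\delbar$ by downward induction on $i$. Set $\tilde\beta_{N-1}:=\beta_{N-1}$. If $\tilde\beta_i\in\ker\del$ has been built, hypothesis (i) gives a harmonic element $h_i$ and some $\tau_i$ with $\tilde\beta_i=h_i+\del\tau_i$. Applying $\delbar$ and using $\delbar h_i=0$ yields $\delbar\tilde\beta_i=-\del\delbar\tau_i$; on the other hand, since $\tilde\beta_i-\beta_i$ is $\delbar$-exact, hence $\delbar$-closed, $\delbar\tilde\beta_i=\delbar\beta_i=\del\beta_{i-1}$. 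Combining the two expressions gives $\del\beta_{i-1}=-\del\delbar\tau_i$, so $\tilde\beta_{i-1}:=\beta_{i-1}+\delbar\tau_i$ is $\del$-closed, differs from $\beta_{i-1}$ by a $\delbar$-exact term, and the induction continues.

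In the final step, $\tilde\beta_0=\beta+\delbar\tau_1\in\ker\del$ with $\delbar\tilde\beta_0=\delbar\beta=\alpha$. Applying hypothesis (i) one last time, write $\tilde\beta_0=h_0+\del\tau_0$ for harmonic $h_0$, and conclude $\alpha=\delbar\tilde\beta_0=\delbar h_0+\delbar\del\tau_0=-\del\delbar\tau_0\in\imm\del\delbar$, as required. The most delicate point is the ascent: one must verify at every level that the replacement $\beta_i\leadsto\tilde\beta_i$ preserves the descent identity $\delbar\tilde\beta_i=\del\beta_{i-1}$, which works precisely because the corrections are $\delbar$-exact and therefore invisible to $\delbar$.
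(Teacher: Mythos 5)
Your proof is correct and is essentially the paper's argument: the paper runs a downward induction on $s$, proving $\imm\delbar\cap\ker\del=\imm\del\delbar$ on $A^{2n-s}$ by using hypothesis \emph{(ii)} to place $\del\alpha_{2n-s}$ in $\imm\del\delbar$ one degree higher and then hypothesis \emph{(i)} to absorb the harmonic part — which is exactly your zig-zag with the recursion left implicit in the induction hypothesis. Your explicit descent/ascent is the unrolled form of that induction, terminating in top degree for the same boundedness reason.
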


\begin{proof}
For $s\in\N\setminus\{0\}$, we show that $\imm \delbar\cap\ker\del=\imm \del\delbar=\imm \del\cap\ker \delbar$ on $A^{2n-s}$.
We will prove this by induction on $s\in\N\setminus\{0\}$.

First we consider the case $s=0$.
By $\imm \delbar=\{0\}$, we have  $\imm \delbar\cap\ker\del = \{0\} = \imm \del\cap\ker \delbar$, and 
$\imm \del\delbar=\imm(- \delbar \del)= \{0\}$.

Then we consider the case $s=1$.
Let $\beta\in \imm \delbar\cap\ker\del\cap A^{2n-1} = \imm \del\cap\ker \delbar\cap A^{2n-1}$ such that $\beta=\delbar \alpha$ for some $\alpha\in A^{2n}$.
Then we have $\del\delbar\alpha=\del \beta=0$.
By the assumption and by $\del A^{2n}=0$, we have $\alpha_{\text{hr}}\in H_{\text{hr}}^{2n}(A^\bullet)$ such that $\alpha=\alpha_{\text{hr}}+\del\gamma$ for some $\gamma\in A^{2n-1}$.
Hence we have $\beta=\delbar\alpha_{\text{hr}}+\delbar\del\gamma=\delbar\del \gamma=\del\delbar (-\gamma)$.

Assume now that $\imm \delbar\cap\ker\del=\imm \del\delbar=\imm \del\cap\ker \delbar$ on $A^{2n-r}$ with $r\leq s$.
We need to prove $\imm \delbar\cap\ker\del=\imm \del\delbar=\imm \del\cap\ker \delbar$ on $A^{2n-s-1}$.
Let $\alpha_{2n-s-1}\in \imm \delbar\cap\ker\del \cap A^{2n-s-1} = \imm \del\cap\ker \delbar\cap A^{2n-s-1}$ such that $\alpha_{2n-s-1}=\delbar\alpha_{2n-s}$ for some $\alpha_{2n-s}\in A^{2n-s}$.
Then we have $0=\del\alpha_{2n-s-1}=\del\delbar\alpha_{2n-s}=-\delbar\del \alpha_{2n-s}$.
Hence $\del \alpha_{2n-s} \in \imm\del\delbar \cap A^{2n-s+1}$ follows from induction hypothesis and we have $\del \alpha_{2n-s}=\del\delbar\alpha_{2n-s+1}$ for some $\alpha_{2n-s+1}\in A^{2n-s+1}$.
By the assumption that $H^{2n-s}_{\text{hr}}(A^\bullet) \to H^{2n-s}_{\del}(A^\bullet)$ is surjective, 
we have that there exist $\beta_{2n-s}\in A^{2n-s}$ harmonic and $\gamma_{2n-s-1}\in A^{2n-s-1}$ such that 
\[\alpha_{2n-s}-\delbar \alpha_{2n-s+1}-\beta_{2n-s}=\del \gamma_{2n-s-1}.
\]
Hence we have
\[\alpha_{2n-s-1}=\delbar \alpha_{2n-s}=\delbar \del \gamma_{2n-s-1}=\del\delbar(-\gamma_{2n-s-1})\in \imm\del\delbar.
\]
Thus the proposition follows.
\end{proof}

As for the following result, compare also \cite[Theorem 4.2]{cavalcanti} and \cite[Theorem 2]{fernandez-Munoz-Ugarte}.

\begin{prop}\label{delin}
Let $\left(A^{\bullet},\, \del,\, \delbar\right)$ be  a bounded bi-differential $\Z$-graded complex such that $A^{k}=\{0\}$ for $k<0$ or $k>2n$, for some $n\in\N\setminus\{0\}$.
Suppose that
$$\imm\delbar\cap\ker\del=\imm\del\delbar=\imm\del\cap\ker \delbar \;. $$
Then the inclusion $H_{\text{hr}}^{\bullet}(A^{\bullet})\subseteq A^{\bullet}$ induces the surjection $H_{\text{hr}}^{\bullet}(A^{\bullet})\to H_{\del}^{\bullet}(A^{\bullet})$.
\end{prop}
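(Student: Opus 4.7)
The plan is to produce, for an arbitrary $\del$-closed $\alpha$, a harmonic representative of the same $H_{\del}$-class by correcting $\alpha$ with a $\del$-exact term extracted from the hypothesis. Concretely, I would proceed as follows.

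Fix $\alpha\in\ker\del$ and look at $\delbar\alpha$. It clearly lies in $\imm\delbar$, and the graded commutation $[\del,\delbar]=0$ (so that $\del\delbar=-\delbar\del$, as is already used in the proof of Proposition \ref{ddlam}) gives $\del(\delbar\alpha)=-\delbar(\del\alpha)=0$. Hence
\[
\delbar\alpha\;\in\;\imm\delbar\cap\ker\del\;=\;\imm\del\delbar
\]
by the first half of the hypothesis. So there exists $\eta\in A^{\bullet}$ with $\delbar\alpha=\del\delbar\eta$.

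Now set $\beta:=\alpha+\del\eta$. Then $\del\beta=\del\alpha+\del^{2}\eta=0$, and
\[
\delbar\beta\;=\;\delbar\alpha+\delbar\del\eta\;=\;\delbar\alpha-\del\delbar\eta\;=\;0,
\]
so $\beta\in H_{\text{hr}}^{\bullet}(A^{\bullet})$. Since $\alpha-\beta=-\del\eta\in\imm\del$, the classes $[\alpha]$ and $[\beta]$ coincide in $H_{\del}^{\bullet}(A^{\bullet})$, and this exhibits $[\alpha]$ as the image of a harmonic class under $H_{\text{hr}}^{\bullet}(A^{\bullet})\to H_{\del}^{\bullet}(A^{\bullet})$.

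In this form the argument is essentially a one-line calculation once the hypothesis $\imm\delbar\cap\ker\del=\imm\del\delbar$ is invoked; in fact only this half of the stated assumption is needed, the companion equality $\imm\del\cap\ker\delbar=\imm\del\delbar$ being unused. There is no genuine obstacle: the only mildly delicate point is keeping track of the sign $\delbar\del=-\del\delbar$ coming from the graded bracket, which is exactly what makes the correction $\alpha+\del\eta$ (rather than $\alpha-\del\eta$) cancel $\delbar\alpha$.
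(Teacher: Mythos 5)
Your proof is correct and is essentially identical to the paper's own argument: take a $\del$-closed $\alpha$, note $\delbar\alpha\in\imm\delbar\cap\ker\del=\imm\del\delbar$, write $\delbar\alpha=\del\delbar\eta$, and correct $\alpha$ by the $\del$-exact term $\del\eta$ to obtain a harmonic representative of the same class. Your observation that only the equality $\imm\delbar\cap\ker\del=\imm\del\delbar$ is used also matches the paper's proof, and your sign bookkeeping (yielding $\alpha+\del\eta$ rather than $\alpha-\del\eta$) is the careful version of what the paper writes.
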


\begin{proof}
For a $\del$-closed $k$-element $\alpha \in A^{k}$, we have
$\del\delbar\alpha=-\delbar\del\alpha=0$. 
By $\delbar\alpha\in  \imm \delbar\cap\ker\del=\imm \del\delbar$, we have $\beta\in A^{k-1}$ such that $\delbar \alpha=\del\delbar\beta$.
Hence $\alpha-\del\beta$ is a harmonic element which is cohomologous to $\alpha$.
\end{proof}

We summarize the contents of Proposition \ref{ddlam} and Proposition \ref{delin} in the following corollary.

\begin{cor}
Let $\left(A^{\bullet},\, \del,\, \delbar\right)$ be  a bounded bi-differential $\Z$-graded complex such that $A^{k}=\{0\}$ for $k<0$ or $k>2n$, for some $n\in\N\setminus\{0\}$.
Suppose that $\imm\delbar\cap\ker\del=\imm\del\cap\ker \delbar$.
The following conditions are equivalent:
\begin{enumerate}
 \item the inclusion $H_{\text{hr}}^{\bullet}(A^{\bullet})\subseteq A^{\bullet}$ induces the surjection $H_{\text{hr}}^{\bullet}(A^{\bullet})\to H_{\del}^{\bullet}(A^{\bullet})$;
 \item $\imm \delbar\cap\ker\del=\imm \del\delbar=\imm \del\cap\ker \delbar$
\end{enumerate}
\end{cor}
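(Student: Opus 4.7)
The plan is to observe that this corollary is a direct repackaging of Proposition \ref{ddlam} and Proposition \ref{delin}, with the common hypothesis $\imm\delbar\cap\ker\del=\imm\del\cap\ker\delbar$ promoted to a standing assumption. Hence no further work is needed beyond invoking those two propositions in sequence for the two implications.

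For the implication (i) $\Rightarrow$ (ii), I would simply apply Proposition \ref{ddlam}. Its two hypotheses are exactly: the surjectivity $H^\bullet_{\text{hr}}(A^\bullet)\to H^\bullet_{\del}(A^\bullet)$, which is (i), and the identity $\imm\delbar\cap\ker\del=\imm\del\cap\ker\delbar$, which is the corollary's standing assumption. Its conclusion $\imm\delbar\cap\ker\del=\imm\del\delbar=\imm\del\cap\ker\delbar$ is precisely (ii).

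For the converse (ii) $\Rightarrow$ (i), I would apply Proposition \ref{delin}, whose single hypothesis is the chain of equalities $\imm\delbar\cap\ker\del=\imm\del\delbar=\imm\del\cap\ker\delbar$, that is, (ii), and whose conclusion is the surjectivity of $H^\bullet_{\text{hr}}(A^\bullet)\to H^\bullet_{\del}(A^\bullet)$, that is, (i).

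Since both implications reduce to a one-line citation of an already-proved proposition, there is essentially no obstacle; the only thing to verify is that the hypothesis $\imm\delbar\cap\ker\del=\imm\del\cap\ker\delbar$ is indeed what is needed (together with (i), respectively (ii)) to match the statements of Propositions \ref{ddlam} and \ref{delin}, which it does verbatim. The boundedness hypothesis $A^k=\{0\}$ for $k<0$ or $k>2n$ is also carried over unchanged and used only to support the induction in Proposition \ref{ddlam}.
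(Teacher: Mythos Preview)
Your proposal is correct and matches the paper's approach exactly: the paper presents this corollary without proof, introducing it as a summary of Proposition \ref{ddlam} and Proposition \ref{delin}, and your argument spells out precisely that. The only minor remark is that for the implication (ii) $\Rightarrow$ (i) the standing hypothesis $\imm\delbar\cap\ker\del=\imm\del\cap\ker\delbar$ is not even needed, since it is already contained in (ii); but this is harmless.
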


\section{Preliminaries and notations on symplectic structures}
In this section, we set the notations concerning symplectic structures on manifolds and symplectic Hodge theory, referring to, e.g., \cite{cannasdasilva, brylinski, mathieu, yan, cavalcanti, tseng-yau-1} for more details.

\subsection{Symplectic structures}\label{ope}
Let $X$ be a $2n$-dimensional compact manifold endowed with a \emph{symplectic structure} $\omega$, namely, a non-degenerate $\de$-closed $2$-form on $X$. The symplectic form $\omega$ yields the natural isomorphism
$$ I \colon TX \to T^*X \;, \qquad I(\sspace)(\ssspace) \;:=\; \omega(\sspace,\ssspace) \;.$$
Define the \emph{canonical Poisson bi-vector} associated to $\omega$ as
$$ \Pi \;:=\; \omega^{-1} \;:=\; \omega\left(I^{-1}\sspace,I^{-1}\ssspace\right) \;\in\; \wedge^2 TX \;, $$
and, for $k\in\N$, let $\left(\omega^{-1}\right)^k$ be the bi-linear form on $\wedge^k X$ defined, on the simple elements $\alpha^1\wedge\cdots \wedge \alpha^k\in\wedge^kX$ and $\beta^1\wedge\cdots \wedge \beta^k \in \wedge^kX$, as
$$
\left(\omega^{-1}\right)^k\left(\alpha^1\wedge\cdots \wedge \alpha^k,\, \beta^1\wedge\cdots \wedge \beta^k\right) \;:=\; \det\left(\omega^{-1}\left(\alpha^i,\, \beta^j\right)\right)_{i,j\in\{1,\ldots,k\}} \;.
$$
Define the \emph{symplectic-$\star$-operator}, \cite[\S2]{brylinski},
$$
\star_\omega\colon \wedge^\bullet X\to \wedge^{2n-\bullet}X
$$
by requiring that, for every $\alpha,\beta\in\wedge^kX$,
$$ \alpha\wedge\star_\omega \beta \;=\; \left(\omega^{-1}\right)^k\left(\alpha,\beta\right)\,\omega^n \;.$$

The operators
\begin{eqnarray*}
L \;\in\; \End^2\left(\wedge^\bullet X\right) \;, \quad && L(\alpha) \;:=\; \omega\wedge\alpha \;,\\[5pt]
\Lambda \;\in\; \End^{-2}\left(\wedge^\bullet X\right) \;, \quad && \Lambda(\alpha) \;:=\; -\iota_\Pi\alpha \;,\\[5pt]
H \;\in\; \End^0\left(\wedge^\bullet X\right) \;, \quad && H(\alpha) \;:=\; \sum_{k\in\Z} \left(n-k\right)\,\pi_{\wedge^kX}\alpha \;,
\end{eqnarray*}
yield an $\slf_2(\C)$-representation on $\wedge^\bullet X\otimes\C$, see, e.g., \cite[Corollary 1.6]{yan} (where $\iota_{\xi}\colon\wedge^{\bullet}X\to\wedge^{\bullet-2}X$ denotes the interior product with $\xi\in\wedge^2\left(TX\right)$, and $\pi_{\wedge^kX}\colon\wedge^\bullet X\to\wedge^kX$ denotes the natural projection onto $\wedge^kX$, for $k\in\Z$).

Define the \emph{symplectic co-differential operator} as
$$ \de^\Lambda \;:=\; \left[\de,\, \Lambda\right] \;\in\; \End^{-1}\left(\wedge^\bullet X \right) \;. $$
One has that
$$ \de^2 \;=\; \left(\de^\Lambda\right)^2 \;=\; \left[\de,\, \de^\Lambda\right] \;=\; 0 \;,$$
see, e.g., \cite[page 266, page 265]{koszul}, \cite[Proposition 1.2.3, Theorem 1.3.1]{brylinski}.

\subsection{Symplectic cohomologies}
Let $X$ be a $2n$-dimensional compact manifold endowed with a symplectic structure $\omega$.

By considering the bi-differential $\Z$-graded complex $\left(\wedge^\bullet X, \de, \de^\Lambda\right)$, one has the symplectic cohomologies $H^\bullet_{\sharp}\left(X;\R\right) := H^\bullet_{\sharp}\left(\wedge^\bullet X\right)$, for $\sharp\in\left\{dR, \de^\Lambda, BC, A\right\}$.
More precisely, other than the cohomologies
$$ H^\bullet_{dR}(X;\R) \;:=\; \frac{\ker\de}{\imm\de} \qquad \text{ and } \qquad H^\bullet_{\de^\Lambda}(X;\R) \;:=\; \frac{\ker\de^\Lambda}{\imm\de^\Lambda} \;, $$
one can define, following L.-S. Tseng and S.-T. Yau, \cite[\S3.2, \S3.3]{tseng-yau-1}, see also \cite{tseng-yau-2, tseng-yau-3},
$$ H^\bullet_{BC}(X;\R) \;:=\; \frac{\ker\de\cap\ker\de^\Lambda}{\imm\de\de^\Lambda} \qquad \text{ and } \qquad H^\bullet_{A}(X;\R) \;:=\; \frac{\ker\de\de^\Lambda}{\imm\de+\imm\de^\Lambda} \;. $$

In view of generalized complex geometry, \cite{gualtieri, cavalcanti, cavalcanti-impa}, these cohomologies are the symplectic counterpart of the Bott-Chern and Aeppli cohomologies for complex manifolds, \cite{tseng-yau-3}.

Note that $\star_\omega^2=\id_{\wedge^\bullet X}$, \cite[Lemma 2.1.2]{brylinski}, and that $\de^\Lambda\lfloor_{\wedge^kX} = \left(-1\right)^{k+1}\, \star_\omega \, \de \, \star_\omega$ for any $k\in\N$, \cite[Theorem 2.2.1]{brylinski}. In particular, it follows that the symplectic Hodge-$\star$-operator induces the isomorphism, \cite[Corollary 2.2.2]{brylinski},
$$ \star_\omega \colon H^\bullet_{dR}(X;\R) \stackrel{\simeq}{\to} H^{2n-\bullet}_{\de^\Lambda}(X;\R) \;. $$

\medskip

In \cite{tseng-yau-1}, L.-S. Tseng and S.-T. Yau developed a Hodge theory for the symplectic cohomologies. More precisely, fixed an almost-K\"ahler structure $\left(J, \omega, g:=\omega\left(\sspace, \, J\ssspace\right)\right)$ on $X$, they defined self-adjoint elliptic differential operators whose kernel is isomorphic to the above cohomologies, \cite[Proposition 3.3, Theorem 3.5, Theorem 3.16]{tseng-yau-1}. In particular, $X$ being compact, it follows that $\dim_\R H^\bullet_\sharp(X;\R) < +\infty$ for $\sharp\in\left\{dR,\, \de^\Lambda,\, BC,\, A\right\}$, \cite[Corollary 3.6, Corollary 3.17]{tseng-yau-1}. As another consequence, the Hodge-$*$-operator $*_g\colon \wedge^\bullet X \to \wedge^{2n-\bullet}X$ induces the isomorphism, \cite[Corollary 3.25]{tseng-yau-1},
$$ *_g\colon H^\bullet_{BC}(X;\R) \stackrel{\simeq}{\to} H^{2n-\bullet}_{A}(X;\R) \;. $$

\subsection{Hard Lefschetz Condition}
Several special cohomological properties can be defined on symplectic manifolds. More precisely, a compact $2n$-dimensional manifold $X$ endowed with a symplectic structure $\omega$ is said to satisfy:
\begin{itemize}
 \item the {\em Hard Lefschetz Condition} if, for any $k\in\N$, the map $\left[\omega^k\right]\cp\sspace \colon H^{n-k}_{dR}(X;\R) \to H^{n+k}_{dR}(X;\R)$ is an isomorphism;
 \item the {\em Brylinski conjecture}, \cite[Conjecture 2.2.7]{brylinski}, if every de Rham cohomology class admits a $\de$-closed $\de^\Lambda$-closed representative, namely, the natural map $H^{\bullet}_{BC}(X;\R) \to H^{\bullet}_{dR}(X;\R)$ induced by the identity is surjective;
 \item the {\em $\de\de^\Lambda$-Lemma} if every $\de$-exact $\de^\Lambda$-closed form is also $\de\de^\Lambda$-exact, namely, if the natural map $H^{\bullet}_{BC}(X;\R) \to H^{\bullet}_{dR}(X;\R)$ induced by the identity is injective.
\end{itemize}

By \cite[Corollary 2]{mathieu}, \cite[Theorem 0.1]{yan}, \cite[Proposition 1.4]{merkulov}, \cite{guillemin}, \cite[Proposition 3.13]{tseng-yau-1}, \cite[Theorem 5.4]{cavalcanti}, it follows that, for compact symplectic manifolds, the Hard Lefschetz Condition, the Brylinski conjecture, and the $\de\de^\Lambda$-Lemma are equivalent properties. In this case, it follows that the natural maps
$$
\xymatrix{
& H^{\bullet}_{BC}(X;\R) \ar[dl] \ar[dr] & \\
H^{\bullet}_{dR}(X;\R) \ar[dr] & & H^{\bullet}_{\de^\Lambda}(X;\R) \ar[dl] \\
& H^{\bullet}_{A}(X;\R) & \\
}
$$
induced by the identity are actually isomorphisms.

(See also, e.g., \cite[Proposition 3.13]{tseng-yau-1}, \cite{bock}, \cite[\S3, Theorem 4]{macri}, \cite[Remark 2.3]{angella-tomassini-4}, \cite[Theorem 4.4]{angella-tomassini-5}, \cite[\S8]{kasuya-jdg}, \cite[\S5]{kasuya-osaka}.)

\section{Symplectic cohomologies for solvmanifolds}

In this section, we apply the results in \cite{angella-kasuya-1} in order to provide tools for the computations of the symplectic cohomologies for solvmanifolds. In particular, we recover a theorem by M. Macrì, \cite{macri}, for completely-solvable solvmanifolds, see Theorem \ref{thm:sympl-cohom-compl-solv}, and we extend the result to the general case in Theorem \ref{thm:sympl-cohom-solvmfd}. Such results will be used in Section \ref{sec:applications} to investigate explicit examples.

\subsection{Notations}
In order to fix notations, let $X = \solvmfd$ be a \emph{solvmanifold} (that is, a compact quotient of a connected simply-connected solvable Lie group by a co-compact discrete subgroup) endowed with a $G$-left-invariant symplectic structure $\omega$. Denote the Lie algebra associated to $G$ by $\g$: it is endowed with the linear symplectic structure $\omega\in\wedge^2\g^\ast$. Denote the complexification of $\g$ by $\g_\C := \g \otimes_\R \C$.

Given a bi-differential $\Z$-graded sub-complex $\left( A^{\bullet},\, \de,\, \de^\Lambda \right) \hookrightarrow \left( \wedge^\bullet X,\, \de,\, \de^\Lambda \right)$, consider, for $\sharp \in \left\{ dR,\, \de^\Lambda,\, BC,\, A \right\}$,
$$ \iota\colon H^{\bullet}_{\sharp}(A^\bullet) \to H^{\bullet}_{\sharp}(X;\R) \;. $$

In particular, by means of left-translations, one has the $\Z$-graded $\R$-vector sub-space $\iota\colon \wedge^\bullet\g^\ast \hookrightarrow \wedge^\bullet X$. Since $\omega$ is $G$-left-invariant, the space $\wedge^\bullet\g^\ast$ is endowed with the (restrictions of the) differentials $\de$ and $\de^\Lambda$. In particular, $\left(\wedge^\bullet \mathfrak{g}^* ,\, \de ,\, \de^\Lambda \right)$ is a bi-differential $\Z$-graded sub-complex of $\left( \wedge^\bullet X ,\, \de ,\, \de^\Lambda \right)$.
For $\sharp \in \left\{ dR,\, \de^\Lambda,\, BC,\, A \right\}$, denote
$$ \iota\colon H^{\bullet}_{\sharp}(\mathfrak{g};\R) \;:=\; H^{\bullet}_{\sharp}(\wedge^\bullet\mathfrak{g}^*) \to H^\bullet_{\sharp}\left(X;\R\right) \;. $$

\subsection{Subgroups of symplectic cohomologies}

We firstly note the following result, as a consequence of the symplectic Hodge theory developed by L.-S. Tseng and S.-T. Yau in \cite{tseng-yau-1}.

\begin{cor}\label{cor:inj-sympl-solv}
 Let $\solvmfd$ be a $2n$-dimensional solvmanifold endowed with a $G$-left-invariant symplectic structure $\omega$. Let $\left(A^\bullet,\, \de\right)$ be a sub-complex of $\left(\wedge^\bullet X,\, \de\right)$ such that $A^2 \ni \omega$, and suppose that there exists an almost-K\"ahler structure $\left(J, \omega, g\right)$ on $X$ such that the Hodge-$*$-operator associated to $g$ satisfies $\left.*_g\right\lfloor_{A^\bullet} \colon A^\bullet \to A^{2n-\bullet}$. Then, for $\sharp\in\left\{dR,\, \de^\Lambda,\, BC,\, A\right\}$, the natural map
 $$ \iota\colon H^\bullet_{\sharp}\left(A^\bullet\right) \to H^\bullet_{\sharp}\left(X;\R\right) $$
 is injective.
\end{cor}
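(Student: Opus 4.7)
The plan is to apply the symplectic Hodge theory developed by Tseng--Yau in \cite{tseng-yau-1}, which, for the compact almost-K\"ahler manifold $(X,J,\omega,g)$, provides self-adjoint elliptic Laplacians $\Delta_\sharp$ whose kernels are isomorphic to $H^\bullet_\sharp(X;\R)$, together with $L^2$-orthogonal Hodge decompositions of $\wedge^\bullet X$. The strategy is then to transfer these decompositions to the sub-complex $A^\bullet$.

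The first step is to verify that each of the operators $\de,\,\de^*,\,\de^\Lambda,\,(\de^\Lambda)^*$ entering the Tseng--Yau Laplacians preserves $A^\bullet$. Since $A^\bullet$ is a $\de$-sub-complex with $*_g(A^\bullet)\subseteq A^{2n-\bullet}$ by hypothesis, the $L^2$-adjoint $\de^* = -*_g\,\de\,*_g$ preserves $A^\bullet$. For the symplectic codifferential, one combines Brylinski's identity $\de^\Lambda = (-1)^{k+1}\,*_\omega\,\de\,*_\omega$ on $\wedge^k X$ with the almost-K\"ahler relation between $*_g$ and $*_\omega$, which involves $J$ and the Lefschetz operators $L,\Lambda$ induced by $\omega\in A^2$; this shows that $\de^\Lambda$ and $(\de^\Lambda)^*$ also preserve $A^\bullet$. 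Consequently each $\Delta_\sharp$, being a polynomial in these four operators, restricts to a self-adjoint operator on $A^\bullet$ equipped with the restricted $L^2$-inner product.

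Given this, let $[\alpha]\in H^\bullet_\sharp(A^\bullet)$ with $\iota([\alpha])=0$ in $H^\bullet_\sharp(X;\R)$. By Tseng--Yau's Hodge decomposition the harmonic projection $P_\sharp\alpha\in\ker\Delta_\sharp$ vanishes, so $\alpha$ lies in $\imm\Delta_\sharp$. Applying the explicit expressions $\alpha = \de\de^* G_{dR}\alpha$ (for $\sharp=dR$, once $\de\alpha=0$) and $\alpha = \de\de^\Lambda(\de\de^\Lambda)^* G_{BC}\alpha$ (for $\sharp=BC$, once $\de\alpha=\de^\Lambda\alpha=0$), and the analogous ones for $\sharp\in\{\de^\Lambda,A\}$, exhibits $\alpha$ as a $\sharp$-exact form with primitives in $A^\bullet$, provided Green's operator $G_\sharp$ also preserves $A^\bullet$. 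Thus $[\alpha]=0$ in $H^\bullet_\sharp(A^\bullet)$.

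The main obstacle is the preservation of $A^\bullet$ by $G_\sharp$. When $A^\bullet$ is finite-dimensional, as in the applications to left-invariant sub-complexes of solvmanifolds below, this is automatic: $\Delta_\sharp|_{A^\bullet}$ is self-adjoint on the finite-dimensional inner product space $A^\bullet$, yielding the orthogonal decomposition $A^\bullet = (A^\bullet\cap\ker\Delta_\sharp)\oplus\imm(\Delta_\sharp|_{A^\bullet})$, so $G_\sharp$ restricts to $A^\bullet$. In the general (possibly infinite-dimensional) setting, the argument requires an additional closedness hypothesis, but the algebraic core remains the same.
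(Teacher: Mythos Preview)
Your approach is essentially the same as the paper's: both invoke the Tseng--Yau Laplacians $D_{\de+\de^\Lambda}$ and $D_{\de\de^\Lambda}$, check that they restrict to $A^\bullet$ (via $\de$, $\de^\Lambda$, $*_g$), and deduce injectivity from the resulting Hodge decomposition on $A^\bullet$. The paper packages the final step as a commutative diagram identifying $H^\bullet_{BC}(A^\bullet)\simeq\ker D_{\de+\de^\Lambda}\lfloor_{A^\bullet}\hookrightarrow\ker D_{\de+\de^\Lambda}\simeq H^\bullet_{BC}(X;\R)$ rather than through Green's operators, but the content is identical; your explicit remark that the argument needs $A^\bullet$ finite-dimensional (so that the restricted Laplacian is self-adjoint on a finite-dimensional inner-product space and hence yields a Hodge decomposition) is a point the paper leaves implicit.
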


\begin{proof}
 Take an almost-K\"ahler structure $(J,\omega,g)$ as in the statement. In particular, $J$ is an almost-complex structure being compatible with $\omega$, that is, $g:=\omega(\sspace,\, J\ssspace)$ is a $J$-Hermitian metric with associated fundamental form $\omega$. Consider the Hodge-$*$-operator $*_g\colon \wedge^{\bullet}X \to \wedge^{2n-\bullet}X$ associated to $g$.
 By \cite[Theorem 3.5, Corollary 3.6]{tseng-yau-1}, the $4^{\text{th}}$-order self-adjoint differential operator
 $$ D_{\de+\de^\Lambda} \;:=\; \left(\de\de^\Lambda\right)\left(\de\de^\Lambda\right)^* + \left(\de\de^\Lambda\right)^*\left(\de\de^\Lambda\right) + \left(\de^*\de^\Lambda\right)\left(\de^*\de^\Lambda\right)^* + \left(\de^*\de^\Lambda\right)^*\left(\de^*\de^\Lambda\right) + \de^*\de+\left(\de^\Lambda\right)^*\de^\Lambda $$
 is elliptic, and it induces an orthogonal decomposition
 $$ \wedge^\bullet X \;=\; \ker D_{\de+\de^\Lambda} \stackrel{\perp}{\oplus} \de\de^\Lambda \wedge^\bullet X \stackrel{\perp}{\oplus} \left(\de^*\wedge^{\bullet+1}X + \left(\de^\Lambda\right)^*\wedge^{\bullet-1}X\right) \;, $$
 and hence the isomorphism
 $$ H^{\bullet}_{BC}(X;\R) \;\simeq\; \ker D_{\de+\de^\Lambda} \;, $$

 By the hypotheses, one has that $\de$ and $\de^\Lambda$, and $*_g$, restricts to $A^\bullet$. In particular, $D_{\de+\de^\Lambda}\lfloor_{A^\bullet}\colon A^\bullet \to A^\bullet$ induces an isomorphism
 $$ H^\bullet_{BC}(A^\bullet) \;\simeq\; \ker D_{\de+\de^\Lambda} \;. $$

 Hence, (as in \cite[Theorem 1.6]{angella-kasuya-1},) one gets the commutative diagram
 $$
 \xymatrix{
  \ker D_{\de+\de^\Lambda}\lfloor_{A^\bullet} \ar[r]^{\simeq} \ar@{^{(}->}[d] & H^{\bullet}_{BC}(A^\bullet) \ar[d] \\
  \ker D_{\de+\de^\Lambda} \ar[r]^{\simeq} \ar[r]^{\simeq} & H^{\bullet}_{BC}(X;\R)
 }
 $$
 from which it follows that the natural map $H^{\bullet}_{BC}(A^\bullet) \to H^{\bullet}_{BC}(X;\R)$ is injective.

 The theorem follows, by considering the differential elliptic operators $\left[\de,\de^*\right]$ and $\left[\de^\Lambda, \left(\de^\Lambda\right)^*\right]$ and, \cite[Theorem 3.16]{tseng-yau-1},
 \begin{eqnarray*}
  D_{\de\de^\Lambda} &:=& \left(\de\de^\Lambda\right)\left(\de\de^\Lambda\right)^* + \left(\de\de^\Lambda\right)^*\left(\de\de^\Lambda\right) + \left(\de\left(\de^\Lambda\right)^*\right)\left(\de\left(\de^\Lambda\right)^*\right)^* + \left(\de\left(\de^\Lambda\right)^*\right)^*\left(\de\left(\de^\Lambda\right)^*\right) \\[5pt]
  && + \de\de^* + \de^\Lambda\left(\de^\Lambda\right)^* \;,   
 \end{eqnarray*}
 such that $H^{\bullet}_{A}(X;\R)\simeq\ker D_{\de\de^\Lambda}$, \cite[Corollary 3.17]{tseng-yau-1},
 or by noting that $*_g D_{\de+\de^\Lambda}=D_{\de\de^\Lambda}*_g$, from which one has the isomorphism $*_g\colon H^{\bullet}_{BC}(X;\R)\stackrel{\simeq}{\to}H^{2n-\bullet}_{A}(X;\R)$, \cite[Lemma 3.23, Proposition 3.24, Corollary 3.25]{tseng-yau-1}.
\end{proof}

\subsection{Symplectic cohomologies for completely-solvable solvmanifolds}

By A. Hattori's theorem \cite[Corollary 4.2]{hattori}, if $G$ is \emph{completely-solvable}, (that is, for any $g\in G$, all the eigen-values of $\Ad g$ are real,) then the natural map $H^\bullet_{dR}\left(\g;\R\right) \to H^\bullet_{dR}\left(X;\R\right)$ is an isomorphism.

The following result states that, for a completely-solvable solvmanifold, the L.-S. Tseng and S.-T. Yau symplectic cohomologies can be computed using just left-invariant forms; we refer to \cite[Theorem 3]{macri} by M. Macrì for a different proof of the same result.

\begin{thm}[{see \cite[Theorem 3]{macri}}]\label{thm:sympl-cohom-compl-solv}
 Let $\solvmfd$ be a  solvmanifold endowed with a $G$-left-invariant symplectic structure $\omega$. 
Suppose that the natural map $H^\bullet_{dR}\left(\g;\R\right) \to H^\bullet_{dR}\left(X;\R\right)$ is an isomorphism.
Then, 
 for $\sharp\in\left\{\de^\Lambda,\, BC,\, A\right\}$, the natural map
 $$ \iota\colon H^\bullet_{\sharp}\left(\g;\R\right) \to H^\bullet_{\sharp}\left(X;\R\right) $$
 is an isomorphism.
\end{thm}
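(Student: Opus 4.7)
\medskip
\noindent\textbf{Proof sketch.} The plan is to apply Corollary \ref{ives} to the inclusion $\left(\wedge^\bullet\g^\ast,\, \de,\, \de^\Lambda\right) \hookrightarrow \left(\wedge^\bullet X,\, \de,\, \de^\Lambda\right)$ in order to obtain the Bott-Chern statement, and then to derive the Aeppli statement by symplectic Hodge duality. Note first that the sub-complex is well defined: $\wedge^\bullet\g^\ast$ is closed under $\de$ by the Chevalley-Eilenberg formula, and it is closed under $\Lambda = -\iota_{\omega^{-1}}$ because the Poisson bivector $\omega^{-1}\in\wedge^2\g$ is left-invariant, hence under $\de^\Lambda = \left[\de,\Lambda\right]$.

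First I would check the hypotheses of Corollary \ref{ives} concerning the $\de$- and $\de^\Lambda$-cohomologies. Hattori's theorem \cite{hattori} gives $\iota\colon H^{\bullet}_{dR}(\g;\R) \stackrel{\simeq}{\to} H^{\bullet}_{dR}(X;\R)$, since $G$ is completely-solvable. For the $\de^\Lambda$-cohomology, observe that $\star_\omega$ is constructed only from $\omega$ and $\omega^n$, which are left-invariant; hence $\star_\omega$ preserves $\wedge^\bullet\g^\ast$ and commutes with $\iota$. By the duality $\star_\omega\colon H^\bullet_{dR} \stackrel{\simeq}{\to} H^{2n-\bullet}_{\de^\Lambda}$ of \cite[Corollary 2.2.2]{brylinski}, applied both to $X$ and to the finite-dimensional complex $\wedge^\bullet\g^\ast$, the commutative square forces $\iota\colon H^{\bullet}_{\de^\Lambda}(\g;\R) \stackrel{\simeq}{\to} H^{\bullet}_{\de^\Lambda}(X;\R)$. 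Finite-dimensionality on the $\g$-side is automatic and on the $X$-side follows from \cite[Corollary 3.6, Corollary 3.17]{tseng-yau-1}.

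Next, I would construct a retraction $\mu\colon \wedge^\bullet X \to \wedge^\bullet\g^\ast$ intertwining both differentials. Since $G$ admits a cocompact lattice it is unimodular, so a left-invariant Haar measure descends to a $G$-invariant volume form on $X$. For $\alpha\in\wedge^\bullet X$ and left-invariant $V_1,\ldots,V_k\in\g$, set
\[
\mu(\alpha)(V_1,\ldots,V_k) \;:=\; \frac{1}{\mathrm{vol}(X)}\int_{X}\alpha(V_1,\ldots,V_k)\, d\mathrm{vol} \;.
\]
Clearly $\mu\circ\iota = \id_{\wedge^\bullet\g^\ast}$. Using the Koszul formula for $\de\alpha$ on the one side and the Chevalley-Eilenberg formula for $\de(\mu\alpha)$ on the other, together with the identity $\int_X V_i f\,d\mathrm{vol}=0$ for any smooth $f$ (which holds because $L_{V_i}d\mathrm{vol}=0$ on the unimodular group), one obtains $\mu\circ\de = \de\circ\mu$. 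Since $\omega^{-1}$ is left-invariant, the averaging trivially commutes with $\iota_{\omega^{-1}}$, so $\mu\circ\Lambda = \Lambda\circ\mu$ and consequently $\mu\circ\de^\Lambda = \de^\Lambda\circ\mu$. Thus $\mu$ is a morphism of bi-differential $\Z$-graded complexes.

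Applying Corollary \ref{ives} then yields $\iota\colon H^{\bullet}_{BC}(\g;\R)\stackrel{\simeq}{\to} H^{\bullet}_{BC}(X;\R)$. For the Aeppli cohomology, choose any $\omega$-compatible linear almost-complex structure $J$ on $\g$ (which exists on any symplectic vector space) and consider the associated left-invariant almost-K\"ahler metric $g = \omega(\sspace,J\ssspace)$ on $X$. Since $*_g$ is built from $\omega$ and $J$, it preserves $\wedge^\bullet\g^\ast$ and commutes with $\iota$; by \cite[Corollary 3.25]{tseng-yau-1}, together with its formal analogue on the finite-dimensional complex $\wedge^\bullet\g^\ast$, it induces isomorphisms $*_g\colon H^\bullet_{BC}\stackrel{\simeq}{\to}H^{2n-\bullet}_A$ on both sides, and combining these with the Bott-Chern isomorphism gives the Aeppli statement. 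The main obstacle is verifying that the symmetrization $\mu$ is in fact a chain map for both $\de$ and $\de^\Lambda$; once this is in place the rest is formal from Corollary \ref{ives} and the Hodge-theoretic dualities of \cite{brylinski, tseng-yau-1}.
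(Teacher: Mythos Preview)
Your proof is correct and follows essentially the same route as the paper: Hattori for $H_{dR}$, the $\star_\omega$-duality for $H_{\de^\Lambda}$, Corollary~\ref{ives} with an averaging retraction for $H_{BC}$, and the $*_g$-duality of \cite{tseng-yau-1} for $H_A$. The only cosmetic difference is that you construct the averaging map $\mu$ by hand (and verify $\mu\circ\de^\Lambda=\de^\Lambda\circ\mu$ via $\mu\circ\Lambda=\Lambda\circ\mu$), whereas the paper simply invokes the Belgun symmetrization map \cite[Theorem~7]{belgun} together with \cite[Lemma~6.2]{milnor} for the bi-invariant volume; these are the same object.
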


\begin{proof}
 We split the proof in the following steps.


 \paragrafo{1}{The symplectic $\de^\Lambda$-cohomology.}
 Since $\omega$ is $G$-left-invariant, then the symplectic-$\star$-operator $\star_\omega\colon \wedge^{\bullet}X \to \wedge^{\dim X - \bullet}X$ induces the isomorphism $\star_\omega\lfloor_{\wedge^\bullet \g^\ast} \colon \wedge^\bullet \g^\ast \to \wedge^{\dim_\R\g - \bullet} \g^\ast$. Hence, since $\left(\star_\omega\lfloor_{\wedge^\bullet \g^\ast}\right)^2 = \star_\omega^2\lfloor_{\wedge^\bullet \g^\ast} = \id_{\wedge^\bullet \g^\ast}$ and $\de^\Lambda\lfloor_{\wedge^\bullet \g^\ast} = \left(-1\right)^{k+1} \, \star_\omega\lfloor_{\wedge^\bullet \g^\ast} \, \de\lfloor_{\wedge^\bullet \g^\ast} \, \star_\omega\lfloor_{\wedge^\bullet \g^\ast}$ for any $k\in\N$, one has the isomorphism $\star_\omega \colon H^\bullet_{dR}(\g;\R) \stackrel{\simeq}{\to} H^{\dim_\R\g - \bullet}_{\de^\Lambda}(\g;\R)$. Therefore one gets the commutative diagram
 $$
 \xymatrix{
  H^\bullet_{dR}\left(\g;\R\right) \ar[r]^{\simeq}_{\iota} \ar[d]^{\simeq}_{\star_\omega} & H^\bullet_{dR}\left(X;\R\right) \ar[d]^{\simeq}_{\star_\omega} \\
  H^{\dim_\R\g - \bullet}_{\de^\Lambda}\left(\g;\R\right) \ar[r]_{\iota} & H^{\dim X - \bullet}_{\de^\Lambda}\left(X;\R\right) \;,
 }
 $$
 from which it follows that the natural map $\iota\colon H^\bullet_{\de^\Lambda}\left(\g;\R\right) \to H^\bullet_{\de^\Lambda}\left(X;\R\right)$ is an isomorphism.

 \paragrafo{2}{The symplectic Bott-Chern cohomology.}
 Apply Corollary \ref{ives} to $\iota \colon \left(\wedge^\bullet \g^\ast ,\, \de ,\, \de^\Lambda\right) \hookrightarrow \left(\wedge^\bullet X ,\, \de ,\, \de^\Lambda\right)$, where the map $\mu \colon \wedge^\bullet X \to \wedge^\bullet \g^\ast$ is the F.~A. Belgun symmetrization map, \cite[Theorem 7]{belgun}: namely, by \cite[Lemma 6.2]{milnor}, consider a $G$-bi-invariant volume form $\eta$ on $G$ such that $\int_X\eta=1$, and define
 $$ \mu\colon \wedge^\bullet X \otimes_\R \C \to \wedge^\bullet\g_\C^\ast \;, \qquad \mu(\alpha)\;:=\;\int_X \alpha\lfloor_x \, \eta(x) \;. $$

 \paragrafo{3}{The symplectic Aeppli cohomology.}
 Let $J$ be a $G$-left-invariant $\omega$-compatible almost-complex structure on $X$, (see, e.g., \cite[Proposition 12.6]{cannasdasilva},) and consider the $G$-left-invariant $J$-Hermitian metric $g:=\omega\left(\sspace, \, J\ssspace\right)$. By \cite[Corollary 3.25]{tseng-yau-1}, the Hodge-$*$-operator $*_g\colon \wedge^\bullet X \to \wedge^{2n-\bullet}X$ induces the isomorphism $*_g\colon H^\bullet_{BC}(X;\R) \stackrel{\simeq}{\to} H^{2n-\bullet}_{A}(X;\R)$, and, since $g$ is $G$-left-invariant, also the isomorphism $*_g\colon H^\bullet_{BC}(\g;\R) \stackrel{\simeq}{\to} H^{2n-\bullet}_{A}(\g;\R)$. Hence one has the commutative diagram
 $$
 \xymatrix{
  H^\bullet_{BC}\left(\g;\R\right) \ar[r]^{\simeq}_{\iota} \ar[d]^{\simeq}_{*_g} & H^\bullet_{BC}\left(X;\R\right) \ar[d]^{\simeq}_{*_g} \\
  H^{\dim_\R\g - \bullet}_{A}\left(\g;\R\right) \ar[r]_{\iota} & H^{\dim X - \bullet}_{A}\left(X;\R\right) \;,
 }
 $$
 from which it follows that also the natural map $\iota\colon H^\bullet_{A}\left(\g;\R\right) \to H^\bullet_{A}\left(X;\R\right)$ is an isomorphism.
\end{proof}

\section{Applications}\label{sec:applications}

In this section, as an application of Theorem \ref{thm:sympl-cohom-compl-solv}, we explicitly compute the symplectic cohomologies of some low-dimensional nilmanifolds and solvmanifolds.

\medskip

We recall that, by \cite[Theorem 4.4]{angella-tomassini-5}, for a compact manifold $X$ endowed with a symplectic structure, for any $k\in\Z$, the inequality
$$ \Delta^k \;:=\; \dim_\R H^{k}_{BC}(X;\R) + \dim_\R H^{k}_{A}(X;\R) - 2\, \dim_\R H^{k}_{dR}(X;\R) \;\geq\; 0 $$
holds. Furthermore, the equality holds for any $k\in\Z$ if and only if $X$ satisfies the Hard Lefschetz Condition.
Non-tori nilmanifolds never satisfy the Hard Lefschetz Condition by \cite[Theorem A]{benson-gordon-nilmanifolds}.
Hence, for nilmanifolds, the numbers $\left\{\Delta^k\right\}_{k\in\Z}$ provide a degree of non-K\"ahlerianity.
As regards Hard Lefschetz Condition for solvmanifolds, we refer to \cite{kasuya-jdg, kasuya-osaka}.
Finally, recall that, by \cite[Theorem 4.3]{tardini-tomassini}, for compact symplectic manifolds, it holds always $\Delta^1=0$.

\medskip

(As a matter of notations, by writing the structure equations of the Lie algebra $\g$ associated to a solvmanifold, we write, e.g., $(0,0,0,12)$: we mean that there exists a basis $\left\{e^1,e^2,e^3,e^4\right\}$ of $\g^*$ such that $\de e^1=\de e^2=\de e^3=0$ and $\de e^4=e^1\wedge e^2$; furthermore, we shorten $e^1\wedge e^2=:e^{12}$; we follow the notations in \cite{salamon, bock, cavalcanti-gualtieri}.)

\subsection{$4$-dimensional solvmanifolds}
According to \cite[Theorem 6.2, Table 2]{bock}, the compact $4$-dimensional manifolds that are diffeomorphic to a solvmanifold admitting a left-invariant symplectic structure are the following:
\begin{enumerate}[\itshape (a)]
 \item the {\em torus} $4\,\mathfrak{g}_{1}=(0,\, 0,\, 0,\, 0)$ endowed with the left-invariant symplectic structure $\omega:=e^{12}+e^{34}$: the Lie algebra is Abelian, and the symplectic structure satisfies the Hard Lefschetz Condition;
 \item the differentiable manifold underlying the {\em primary Kodaira surface} $\mathfrak{g}_{3.1}\oplus\mathfrak{g}_{1}=(0,\, 0,\, 0,\, 23)$ endowed with the left-invariant symplectic structure $\omega:=e^{12}+e^{34}$: the Lie algebra is nilpotent, and hence no left-invariant symplectic structure on such manifold satisfies the Hard Lefschetz Condition;
 \item the solvmanifold associated to $\mathfrak{g}_{1}\oplus\mathfrak{g}_{3.4}^{-1}=(0,\, 0,\, -23,\, 24)$ endowed with the left-invariant symplectic structure $\omega:=e^{12}+e^{34}$: the Lie algebra is completely-solvable, and the symplectic structure satisfies the Hard Lefschetz Condition;
 \item the differentiable manifold underlying the {\em hyper-elliptic surface}, whose associated Lie algebra is $\mathfrak{g}_{1}\oplus\mathfrak{g}_{3.5}^{0}=(0,\, 0,\, -24,\, 23)$, endowed with the left-invariant symplectic structure $\omega:=e^{12}+e^{34}$: it yields a K\"ahler structure on a solvmanifold, and hence the symplectic structure satisfies the Hard Lefschetz Condition;
 \item the manifold associated to $\mathfrak{g}_{4.1}=(0,\, 0,\, 12,\, 13)$ endowed with the left-invariant symplectic structure $\omega:=e^{14}+e^{23}$: the Lie algebra is nilpotent, and hence no left-invariant symplectic structure on such manifold satisfies the Hard Lefschetz Condition.
\end{enumerate}

\medskip

We compute the symplectic cohomologies of the above manifolds endowed with the indicated left-invariant symplectic structures. Note that, in case of the nilmanifolds {\itshape (a)}, {\itshape (b)}, and {\itshape (e)}, by Theorem \ref{thm:sympl-cohom-solvmfd}, see also \cite[Theorem 3]{macri}, it suffices to consider the left-invariant forms. On the other hand, since the symplectic structures on the manifolds {\itshape (c)} and {\itshape (d)} satisfy the Hard Lefschetz Condition, we know that the Bott-Chern, Aeppli, and de Rham cohomologies are all isomorphic.

In Table \ref{table:cohom-4-solvmfd} we list the harmonic representatives with respect to the left-invariant metric $g:=\sum_{j=1}^{4}e^j\odot e^j$, and in Table \ref{table:dim-coom-4-solvmfd} we summarize the dimensions of the symplectic cohomologies and the non-K\"ahlerianity degrees $\left\{\Delta^k\right\}_{k\in\{1,2,3\}}$.

\begin{center}
\begin{table}[ht]
 \centering
\resizebox{\textwidth}{!}{
\begin{tabular}{>{$\mathbf\bgroup}l<{\mathbf\egroup$} >{$\mathbf\bgroup}l<{\mathbf\egroup$} || >{$}p{3cm}<{$} >{$}p{5cm}<{$} >{$}p{4cm}<{$} || }
\toprule
 \multicolumn{2}{c||}{$H^\bullet_{\sharp}$} & H^1_{\sharp} & H^2_{\sharp} & H^3_{\sharp} \\
\toprule
\midrule[0.02em]
\multirow{2}{*}{$4\,\mathfrak{g}_{1}$} & \text{$H^\bullet_{BC}$} & \R\left\langle e^1,\; e^2,\; e^3,\; e^4 \right\rangle & \R\left\langle e^{12},\; e^{13},\; e^{14},\; e^{23},\; e^{24},\; e^{34} \right\rangle & \R\left\langle e^{123},\; e^{124},\; e^{134},\; e^{234} \right\rangle \\
 & \text{$H^\bullet_{A}$} & \R\left\langle e^1,\; e^2,\; e^3,\; e^4 \right\rangle & \R\left\langle e^{12},\; e^{13},\; e^{14},\; e^{23},\; e^{24},\; e^{34} \right\rangle & \R\left\langle e^{123},\; e^{124},\; e^{134},\; e^{234} \right\rangle \\
\midrule[0.02em]
\multirow{2}{*}{$\mathfrak{g}_{3.1}\oplus\mathfrak{g}_{1}$} & \text{$H^\bullet_{BC}$} & \R\left\langle e^1,\; e^2,\; e^3 \right\rangle & \R\left\langle e^{12},\; e^{13},\; e^{23},\; e^{24},\; e^{34} \right\rangle & \R\left\langle e^{123},\; e^{134},\; e^{234} \right\rangle \\
 & \text{$H^\bullet_{A}$} & \R\left\langle e^1,\; e^2,\; e^4 \right\rangle & \R\left\langle e^{12},\; e^{13},\; e^{14},\; e^{24},\; e^{34} \right\rangle & \R\left\langle e^{124},\; e^{134},\; e^{234} \right\rangle \\
\midrule[0.02em]
\multirow{2}{*}{$\mathfrak{g}_{3.4}^{-1}\oplus\mathfrak{g}_{1}$} & \text{$H^\bullet_{BC}$} & \R\left\langle e^1,\; e^2 \right\rangle & \R\left\langle e^{12},\; e^{34} \right\rangle & \R\left\langle e^{134},\; e^{234} \right\rangle \\
 & \text{$H^\bullet_{A}$} & \R\left\langle e^1,\; e^2 \right\rangle & \R\left\langle e^{12},\; e^{34} \right\rangle & \R\left\langle e^{134},\; e^{234} \right\rangle \\
\midrule[0.02em]
\multirow{2}{*}{$\mathfrak{g}_{3.5}^{0}\oplus\mathfrak{g}_{1}$} & \text{$H^\bullet_{BC}$} & \R\left\langle e^1,\; e^2 \right\rangle & \R\left\langle e^{12},\; e^{34} \right\rangle & \R\left\langle e^{134},\; e^{234} \right\rangle \\
 & \text{$H^\bullet_{A}$} & \R\left\langle e^1,\; e^2 \right\rangle & \R\left\langle e^{12},\; e^{34} \right\rangle & \R\left\langle e^{134},\; e^{234} \right\rangle \\
\midrule[0.02em]
\multirow{2}{*}{$\mathfrak{g}_{4.1}$} & \text{$H^\bullet_{BC}$} & \R\left\langle e^1,\; e^2 \right\rangle & \R\left\langle e^{12},\; e^{13},\; e^{14},\; e^{23} \right\rangle & \R\left\langle e^{123},\; e^{124} \right\rangle \\
 & \text{$H^\bullet_{A}$} & \R\left\langle e^3,\; e^4 \right\rangle & \R\left\langle e^{14},\; e^{23},\; e^{24},\; e^{34} \right\rangle & \R\left\langle e^{134},\; e^{234} \right\rangle \\
\bottomrule
\end{tabular}
}
\caption{The symplectic Bott-Chern and Aeppli cohomologies for $4$-dimensional solvmanifolds.}
\label{table:cohom-4-solvmfd}
\end{table}
\end{center}

\begin{table}[ht]
 \centering
\begin{tabular}{>{$\mathbf\bgroup}c<{\mathbf\egroup$} >{$\mathbf\bgroup}c<{\mathbf\egroup$} || >{$}c<{$} | >{$}c<{$} | >{$}c<{$} | >{$}c<{$} | >{$}c<{$} ||}
\toprule
\multicolumn{2}{c||}{\textnormal{$\dim_\R H_{\sharp}^{\bullet}$}} & 4\,\mathfrak{g}_{1} & \mathfrak{g}_{3.1}\oplus\mathfrak{g}_{1} & \mathfrak{g}_{3.4}^{-1}\oplus\mathfrak{g}_{1} & \mathfrak{g}_{3.5}^{0}\oplus\mathfrak{g}_{1} & \mathfrak{g}_{4.1} \\
& & (0,\, 0,\, 0,\, 0) & (0,\, 0,\, 0,\, 23) & (0,\, 0,\, -23,\, 24) & (0,\, 0,\, -24,\, 23) & (0,\, 0,\, 12,\, 13) \\
\toprule
\midrule[0.02em]
\multirow{4}{*}{k=1} & \text{$\dim_\R H^1_{dR}$} & 4 & 3 & 2 & 2 & 2 \\
& \text{$\dim_\R H^1_{BC}$} & 4 & 3 & 2 & 2 & 2 \\
& \text{$\dim_\R H^1_{A}$} & 4 & 3 & 2 & 2 & 2 \\
\hline
& \text{$\Delta^1$} & 0 & 0 & 0 & 0 & 0 \\
\midrule[0.02em]\midrule[0.02em]
\multirow{4}{*}{k=2} & \text{$\dim_\R H^2_{dR}$} & 6 & 4 & 2 & 2 & 2 \\
& \text{$\dim_\R H^2_{BC}$} & 6 & 5 & 2 & 2 & 4 \\
& \text{$\dim_\R H^2_{A}$} & 6 & 5 & 2 & 2 & 4 \\
\hline
& \text{$\Delta^2$} & 0 & 2 & 0 & 0 & 4 \\
\midrule[0.02em]\midrule[0.02em]
\multirow{4}{*}{k=3} & \text{$\dim_\R H^3_{dR}$} & 4 & 3 & 2 & 2 & 2 \\
& \text{$\dim_\R H^3_{BC}$} & 4 & 3 & 2 & 2 & 2 \\
& \text{$\dim_\R H^3_{A}$} & 4 & 3 & 2 & 2 & 2 \\
\hline
& \text{$\Delta^3$} & 0 & 0 & 0 & 0 & 0 \\
\bottomrule
\multicolumn{2}{c||}{\text{\bfseries{HLC?}}} & \text{\checkmark} & \times & \text{\checkmark} & \text{\checkmark} & \times \\
\bottomrule
\end{tabular}
\caption{Summary of the dimensions of symplectic Bott-Chern and Aeppli cohomologies for $4$-dimensional solvmanifolds.}
\label{table:dim-coom-4-solvmfd}
\end{table}

\subsection{$6$-dimensional nilmanifolds}
The $6$-dimensional nilmanifolds can be classified in terms of their Lie algebra, up to isomorphisms, in $34$ classes, according to V.~V. Morozov's classification, \cite{morozov}, see also \cite{magnin}.

As regards the complex geometry of $6$-dimensional nilmanifolds, S. Salamon proved in \cite{salamon} that just $18$ of these $34$ classes admit a complex structure. A complete classification, up to equivalence, of the left-invariant complex structures on $6$-dimensional nilmanifolds follows from the works by several authors, and was completed in \cite{ceballos-otal-ugarte-villacampa}. In \cite{angella-franzini-rossi, latorre-ugarte-villacampa}, the Bott-Chern cohomology for each of such structures is computed. In view of \cite{angella-tomassini-3}, such computations provide a measure of the non-K\"ahlerianity of $6$-dimensional nilmanifolds, but for nilmanifolds associated to the Lie algebra $\mathfrak{h}_7=\left(0,0,0,12,13,23\right)$ possibly.

As regards the symplectic geometry of $6$-dimensional nilmanifolds, M. Goze and Y. Khakimdjanov proved that $26$ of the $34$ classes of $6$-dimensional Lie algebras admit a symplectic structure, \cite{goze-khakimdjanov}.

Finally, G.~R. Cavalcanti and M. Gualtieri proved in \cite{cavalcanti-gualtieri} that every $6$-dimensional nilmanifolds admit a generalized-complex structure.

\medskip

By applying Theorem \ref{thm:sympl-cohom-compl-solv}, see also \cite[Theorem 3]{macri}, one can compute the symplectic cohomologies of the $6$-dimensional nilmanifolds endowed with a left-invariant symplectic structure. The results of the computations, which we performed with the aid of Sage \cite{sage}, are summarized in Table \ref{table:dim-cohom-6-nilmfd}.

\begin{sidewaystable}[pht]
\vspace{10cm}
 \centering
\resizebox{\textwidth}{!}{
\begin{tabular}{>{$}c<{$} >{$}c<{$} >{$}c<{$} >{$}c<{$} | >{$}c<{$} | >{$}c<{$} || >{$}c<{$} >{$}c<{$} >{$}c<{$} | >{$}c<{$} || >{$}c<{$} >{$}c<{$} >{$}c<{$} | >{$}c<{$} || >{$}c<{$} >{$}c<{$} >{$}c<{$} | >{$}c<{$} ||}
\toprule
 \multicolumn{6}{c||}{$\dim_\R H^\bullet_{\sharp}$} &  \multicolumn{4}{c||}{$k=1$} & \multicolumn{4}{c||}{$k=2$} & \multicolumn{4}{c||}{$k=3$} \\
 & \text{\cite{bock}} & \text{\cite{cavalcanti-gualtieri}} & & \omega & \omega = 12 + 34 + 56 & \dim_\R H^{1}_{dR} & \dim_\R H^{1}_{BC} & \dim_\R H^{1}_{A} & \Delta^1 & \dim_\R H^{2}_{dR} & \dim_\R H^{2}_{BC} & \dim_\R H^{2}_{A} & \Delta^2 & \dim_\R H^{3}_{dR} & \dim_\R H^{3}_{BC} & \dim_\R H^{3}_{A} & \Delta^3 \\
\toprule
\midrule[0.02em]
\mathfrak{g}_{6.N2} & 30 & 1 & \left(0,0,12,13,14,15\right) & 16+34-25 & \left(0,15,16,13,14,0\right) & 2 & 2 & 2 & 0 & 3 & 5 & 5 & 4 & 4 & 6 & 6 & 4 \\
\mathfrak{g}_{6.N19} & 31 & 3 & \left(0,0,12,13,14,23+15\right) & 16+24+34-25 & \left(0,63+15,16,13-16,14,0\right) & 2 & 2 & 2 & 0 & 3 & 5 & 5 & 4 & 4 & 6 & 6 & 4 \\
\mathfrak{g}_{6.N11} & 27 & 4 & \left(0,0,12,13,23,14\right) & 15+24+34-26 & \left(0,63,16,13-16,14,0\right) & 2 & 2 & 2 & 0 & 4 & 6 & 6 & 4 & 6 & 6 & 6 & 0 \\
\mathfrak{g}_{6.N18}^{1} & 29 & 5 & \left(0,0,12,13,23,14-25\right) & 15+24-35+16 & \left(35,35+13,0,15,0,54+23\right) & 2 & 2 & 2 & 0 & 4 & 6 & 6 & 4 & 6 & 6 & 6 & 0 \\
\mathfrak{g}_{6.N18}^{-1} & 28 & 6 & \left(0,0,12,13,23,14+25\right) & 15+24+35+16 & \left(53,31-35,0,51,0,54+32\right) & 2 & 2 & 2 & 0 & 4 & 6 & 6 & 4 & 6 & 6 & 6 & 0 \\
\mathfrak{g}_{6.N20} & 32 & 8 & \left(0,0,12,13,14+23,24+15\right) & 16+2\times34-25 & \left(0,64+15,2\times16,\frac{1}{2}\times13,14+\frac{1}{2}\times63,0\right) & 2 & 2 & 2 & 0 & 3 & 5 & 5 & 4 & 4 & 6 & 6 & 4 \\
\midrule[0.02em]
\mathfrak{g}_{6.N6} & 13 & 10 & \left(0,0,0,12,13,14+23\right) & 16-2\times34-25 & \left(0,13+\frac{1}{2}\times64,16,0,\frac{1}{2}\times14,0\right) & 3 & 3 & 3 & 0 & 6 & 9 & 9 & 6 & 8 & 11 & 11 & 6 \\
\mathfrak{g}_{6.N7} & 14 & 11 & \left(0,0,0,12,13,24\right) & 26+14+35 & \left(0,14,0,31,0,35\right) & 3 & 3 & 3 & 0 & 6 & 9 & 9 & 6 & 8 & 9 & 9 & 2 \\
\mathfrak{g}_{6.N1} & 12 & 12 & \left(0,0,0,12,13,14\right) & 16+24+35 & \left(0,14,0,13,0,15\right) & 3 & 3 & 3 & 0 & 6 & 9 & 9 & 6 & 8 & 10 & 10 & 4 \\
\mathfrak{g}_{6.N3} & 11 & 13 & \left(0,0,0,12,13,23\right) & 15+24+36 & \left(0,15,0,13,0,35\right) & 3 & 3 & 3 & 0 & 8 & 11 & 11 & 6 & 12 & 13 & 13 & 2 \\
\mathfrak{g}_{6.N17} & 25 & 14 & \left(0,0,0,12,14,15+23\right) & 13+26-45 & \left(0,0,0,15+32,16,13\right) & 3 & 3 & 3 & 0 & 5 & 6 & 6 & 2 & 6 & 7 & 7 & 2 \\
\mathfrak{g}_{6.N15} & 24 & 15 & \left(0,0,0,12,14,15+23+24\right) & 13+26-45 & \left(0,0,0,15+32+36,16,13\right) & 3 & 3 & 3 & 0 & 5 & 6 & 6 & 2 & 6 & 7 & 7 & 2 \\
\mathfrak{g}_{5.6}\oplus\mathfrak{g}_{1} & 17 & 16 & \left(0,0,0,12,14,15+24\right) & 13+26-45 & \left(0,0,0,15+36,16,13\right) & 3 & 3 & 3 & 0 & 5 & 6 & 6 & 2 & 6 & 7 & 7 & 2 \\
\mathfrak{g}_{5.2}\oplus\mathfrak{g}_{1} & 15 & 17 & \left(0,0,0,12,14,15\right) & 13+26-45 & \left(0,0,0,15,16,13\right) & 3 & 3 & 3 & 0 & 5 & 6 & 6 & 2 & 6 & 7 & 7 & 2 \\
\mathfrak{g}_{6.N9} & 19 & 19 & \left(0,0,0,12,14,13+42\right) & 15+26+34 & \left(0,16,0,15+63,0,13\right) & 3 & 3 & 3 & 0 & 5 & 8 & 8 & 6 & 6 & 10 & 10 & 8 \\
\mathfrak{g}_{6.N8} & 18 & 20 & \left(0,0,0,12,14,23+24\right) & 16-34+25 & \left(0,54+53,15,0,0,13\right) & 3 & 3 & 3 & 0 & 5 & 8 & 8 & 6 & 6 & 10 & 10 & 8 \\
\mathfrak{g}_{6.N16} & 26 & 23 & \left(0,0,0,12,14-23,15+34\right) & 16+35+24 & \left(0,14+36,0,16+35,0,15\right) & 3 & 3 & 3 & 0 & 4 & 7 & 7 & 6 & 4 & 7 & 7 & 6 \\
\mathfrak{g}_{6.N10} & 20 & 24 & \left(0,0,0,12,14+23,13+42\right) & 15+2\times26+34 & \left(0,16+\frac{1}{2}\times35,0,15+\frac{1}{2}\times63,0,\frac{1}{2}\times13\right) & 3 & 3 & 3 & 0 & 5 & 8 & 8 & 6 & 6 & 10 & 10 & 8 \\
\midrule[0.02em]
\mathfrak{g}_{4.1}\oplus2\mathfrak{g}_{1} & 9 & 26 & \left(0,0,0,0,12,15\right) & 16+25+34 & \left(0,14,0,13,0,0\right) & 4 & 4 & 4 & 0 & 7 & 9 & 9 & 4 & 8 & 12 & 12 & 8 \\
\mathfrak{g}_{5.5}\oplus\mathfrak{g}_{1} & 8 & 27 & \left(0,0,0,0,12,14+25\right) & 13+26+45 & \left(0,0,0,15+36,0,13\right) & 4 & 4 & 4 & 0  & 7 & 9 & 9 & 4 & 8 & 12 & 12 & 8 \\
\mathfrak{g}_{6N.4} & 6 & 28 & \left(0,0,0,0,12,14+23\right) & 13+26+45 & \left(0,0,0,15+32,0,13\right) & 4 & 4 & 4 & 0 & 8 & 10 & 10 & 4 & 10 & 12 & 12 & 4 \\
2\mathfrak{g}_{3.1} & 5 & 29 & \left(0,0,0,0,12,34\right) & 15+36+24 & \left(0,15,0,36,0,0\right) & 4 & 4 & 4 & 0 & 8 & 10 & 10 & 4 & 10 & 11 & 11 & 2 \\
\mathfrak{g}_{5.1}\oplus\mathfrak{g}_{1} & 4 & 30 & \left(0,0,0,0,12,13\right) & 16+25+34 & \left(0,15,0,13,0,0\right) & 4 & 4 & 4 & 0 & 9 & 11 & 11 & 4 & 12 & 14 & 14 & 4 \\
\mathfrak{g}_{6N.5} & 7 & 31 & \left(0,0,0,0,13+42,14+23\right) & 16+25+34 & \left(0,16+35,0,15+63,0,0\right) & 4 & 4 & 4 & 0 & 8 & 10 & 10 & 4 & 10 & 11 & 11 & 2 \\
\midrule[0.02em]
\mathfrak{g}_{3.1}\oplus3\mathfrak{g}_{1} & 2 & 33 & \left(0,0,0,0,0,12\right) & 16+23+45 & \left(0,13,0,0,0,0\right) & 5 & 5 & 5 & 0 & 11 & 12 & 12 & 2 & 14 & 16 & 16 & 4 \\
\midrule[0.02em]
6\mathfrak{g}_1 & 1& 34 & \left(0,0,0,0,0,0\right) & 12+34+56 & \left(0,0,0,0,0,0\right) & 6 & 6 & 6 & 0 & 15 & 15 & 15 & 0 & 20 & 20 & 20 & 0 \\
\bottomrule
\end{tabular}
}
\caption{Summary of the dimensions of the symplectic Bott-Chern and Aeppli cohomologies for $6$-dimensional nilmanifolds.}
\label{table:dim-cohom-6-nilmfd}
\end{sidewaystable}

\section{Twisted symplectic cohomologies and twisted Hard Lefschetz Condition}\label{sec:twisted-sympl-cohom}

In this section, we study twisted symplectic cohomologies, and in particular twisted Hard Lefschetz Condition and $D_{\phi}D_{\phi}^{\Lambda}$-Lemma.

\medskip

Let $X$ be be a $2n$-dimensional compact manifold endowed with a symplectic structure $\omega$. 
For a real or complex vector space $V$, consider a trivial vector bundle $E_{\phi}=X\times V$ with a connection form $\phi\in \wedge^{1}X\times \End(V)$.

\subsection{Twisted symplectic cohomologies}
Define the operator
$$ D_{\phi} \;:=\; \de+\phi $$
on the space $\wedge^{\bullet}\left(X;E_{\phi}\right)$ of the differential forms with values in the vector bundle $E_{\phi}$.

Define the operators
\[\ast_{\omega} \colon \wedge^{\bullet}\left(X;E_{\phi}\right)\to \wedge^{2n-\bullet}\left(X;E_{\phi}\right) \;,
\]
and
\begin{eqnarray*}
 L \colon \wedge^{\bullet}\left(X;E_{\phi}\right)\to \wedge^{\bullet+2}\left(X;E_{\phi}\right) \;, \\[5pt]
 \Lambda \colon \wedge^{\bullet}\left(X;E_{\phi}\right)\to \wedge^{\bullet-2}\left(X;E_{\phi}\right) \;, \\[5pt]
 H \colon \wedge^{\bullet}\left(X;E_{\phi}\right)\to \wedge^{\bullet}\left(X;E_{\phi}\right) \;,
\end{eqnarray*}
as the natural extensions of the operators in Section \ref{ope}.

Define
 $$ D_{\phi}^{\Lambda} \;:=\; \left[D_{\phi},\Lambda \right] \;. $$
By the same way as in \cite[Theorem 2.2.1]{brylinski}, (see  also \cite[Proposition 5.1]{cavalcanti},) on $\wedge^{k}\left(X;E_{\phi}\right)$,
we have
$$ D_{\phi}^{\Lambda}\lfloor_{\wedge^{k}\left(X;E_{\phi}\right)} \;=\; (-1)^{k+1} \star_{\omega} D_{\phi}\star_{\omega} \;. $$

\medskip

We suppose now that $\phi$ is flat, i.e., $\de\phi+\phi\wedge\phi=0$.
Then the pair $(\wedge^{\bullet}\left(X;E_{\phi}\right), D_{\phi})$ is a differential graded module over the differential graded algebra $\wedge^{\bullet}X$.
Moreover we have
$$ \left(D_{\phi}^{\Lambda}\right)^{2} \;=\; 0 \;. $$

Now we define the {\em twisted symplectic cohomologies}
$$ H^\bullet_{dR}(X; E_{\phi}) \;:=\; \frac{\ker D_{\phi}}{\imm D_{\phi}} \qquad \text{ and } \qquad H^\bullet_{D_{\phi}^\Lambda}(X;E_{\phi}) \;:=\; \frac{\ker D_{\phi}^\Lambda}{\imm D_{\phi}^\Lambda} \;, $$
and
$$ H^\bullet_{BC}(X;E_{\phi}) \;:=\; \frac{\ker D_{\phi}\cap\ker D_{\phi}^\Lambda}{\imm D_{\phi}D_{\phi}^\Lambda} \qquad \text{ and } \qquad H^\bullet_{A}(X;E_{\phi}) \;:=\; \frac{\ker D_{\phi}D_{\phi}^\Lambda}{\imm D_{\phi}+\imm D_{\phi}^\Lambda} \;. $$

\medskip

By the same way as in \cite[Section 1]{yan}, (see  also \cite[Proposition 5.2]{cavalcanti},) we have the relations

\begin{eqnarray*}
 \left[ D_{\phi},L \right] \;=\; 0\;, & \qquad \left[ D_{\phi}^{\Lambda}, L \right] \;=\; -D_{\phi} \;, \qquad & L \;=\; -\star_{\omega}\Lambda\star_{\omega} \;, \\[5pt]
 \left[ D_{\phi}^{\Lambda}, \Lambda \right] \;=\; 0 \;, & \qquad \left[ D_{\phi}, \Lambda \right] \;=\; D_{\phi}^{\Lambda} \;, \qquad & \Lambda \;=\; -\star_{\omega}L\star_{\omega} \\[5pt]
 \left[ \Lambda,L \right] \;=\; H \;, & \qquad \left[ L,H \right] \;=\; 2L \;, \qquad & \left[ \Lambda, H \right] \;=\; -2\Lambda \;.
\end{eqnarray*}

By $D_{\phi}^{\Lambda}=(-1)^{k+1} \ast_{\omega} D_{\phi}\star_{\omega}$, we get that the operator $\star_{\omega}\colon \wedge^{\bullet}\left(X;E_{\phi}\right)\to \wedge^{2n-\bullet}\left(X;E_{\phi}\right)$ induces the isomorphism
$$ H^\bullet_{dR}(X; E_{\phi}) \stackrel{\simeq}{\to} H^{2n-\bullet}_{D_{\phi}^\Lambda}(X;E_{\phi}).
$$

Take an almost-complex structure $J$ being compatible with $\omega$, that is, $g:=\omega(\sspace,\, J\ssspace)$ is a $J$-Hermitian metric with associated fundamental form $\omega$. Recall that there is a canonical way to construct $J$, see, e.g., \cite[Proposition 12.6]{cannasdasilva}. Consider the Hodge-$*$-operator $*_g\colon \wedge^{\bullet}\left(X;E_{\phi}\right) \to \wedge^{2n-\bullet}\left(X;E_{\phi}^{\ast}\right)$ associated to $g$ where $E_{\phi}^{\ast}$ is the dual of $E_{\phi}$.
Consider the $4^{\text{th}}$-order self-adjoint differential operators
\begin{eqnarray*}
 D_{D_{\phi}+D_{\phi}^\Lambda} &:=& \left(D_{\phi}D_{\phi}^\Lambda\right)\left(D_{\phi}D_{\phi}^\Lambda\right)^* + \left(D_{\phi}D_{\phi}^\Lambda\right)^*\left(D_{\phi}D_{\phi}^\Lambda\right) + \left(D_{\phi}^*D_{\phi}^\Lambda\right)\left(D_{\phi}^*D_{\phi}^\Lambda\right)^* + \left(D_{\phi}^*D_{\phi}^\Lambda\right)^*\left(D_{\phi}^*D_{\phi}^\Lambda\right) \\[5pt]
 && + D_{\phi}^*D_{\phi}+\left(D_{\phi}^\Lambda\right)^*D_{\phi}^\Lambda
\end{eqnarray*}
and
\begin{eqnarray*}
 D_{D_{\phi}D_{\phi}^\Lambda} &:=& \left(D_{\phi}D_{\phi}^\Lambda\right)\left(D_{\phi}D_{\phi}^\Lambda\right)^* + \left(D_{\phi}D_{\phi}^\Lambda\right)^*\left(D_{\phi}D_{\phi}^\Lambda\right) + \left(D_{\phi}\left(D_{\phi}^\Lambda\right)^*\right)\left(D_{\phi}\left(D_{\phi}^\Lambda\right)^*\right)^* \\[5pt]
 && + \left(D_{\phi}\left(D_{\phi}^\Lambda\right)^*\right)^*\left(D_{\phi}\left(D_{\phi}^\Lambda\right)^*\right) + D_{\phi}D_{\phi}^* + D_{\phi}^\Lambda\left(D_{\phi}^\Lambda\right)^* \;.
 \end{eqnarray*}
Then, as similar to \cite{tseng-yau-1}, these operators are elliptic, and hence induce  orthogonal decompositions
 $$ \wedge^\bullet \left(X;E_{\phi}\right) \;=\; \ker D_{D_{\phi}+D_{\phi}^\Lambda} \stackrel{\perp}{\oplus} D_{\phi}D_{\phi}^\Lambda \wedge^\bullet \left(X;E_{\phi}\right) \stackrel{\perp}{\oplus} \left(D_{\phi}^*\wedge^{\bullet+1}\left(X;E_{\phi}\right) + \left(D_{\phi}^\Lambda\right)^*\wedge^{\bullet-1}\left(X;E_{\phi}\right)\right) \; $$
and
$$\wedge^\bullet \left(X;E_{\phi}\right) \;=\; \ker D_{D_{\phi}D_{\phi}^\Lambda} \stackrel{\perp}{\oplus} \left(D_{\phi}\wedge^{\bullet-1}\left(X;E_{\phi}\right) + D_{\phi}^\Lambda\wedge^{\bullet+1}\left(X;E_{\phi}\right)\right) \stackrel{\perp}{\oplus} \left(D_{\phi}D_{\phi}^\Lambda\right)^* \wedge^\bullet \left(X;E_{\phi}\right)\;.
$$
Therefore we have the isomorphisms
 $$ H^{\bullet}_{BC}(X;E_\phi) \;\simeq\; \ker D_{D_{\phi}+D_{\phi}^\Lambda} \qquad \text{ and } \qquad H^{\bullet}_{A}(X;E_\phi) \;\simeq\; \ker D_{D_{\phi}D_{\phi}^\Lambda}\;. $$
By  noting that $*_g D_{D_{\phi}+D_{\phi}^\Lambda}=D_{D_{\phi}D_{\phi}^\Lambda}*_g$, we have the isomorphism
$$ *_g\colon H^{\bullet}_{BC}(X;E_{\phi})\stackrel{\simeq}{\to}H^{2n-\bullet}_{A}(X;E_{\phi}^{\ast}) \;. $$

\subsection{Twisted Hard Lefschetz Condition}

As in \cite[page 102]{brylinski}, we define $c\in \wedge^{\bullet}\left(X;E_{\phi}\right)$ to be {\em symplectically-harmonic} if $D_{\phi}c=D_{\phi}^{\Lambda}c=0$.

We say that $\left(X,\, \omega\right)$ satisfies the {\em $E_{\phi}$-twisted Hard Lefschetz Condition} if, for each $1\le k\le n$, the linear map $[\omega^{k}]\cp \sspace \colon H^{n-k}_{dR}(X; E_{\phi})\to H^{n+k}_{dR}(X; E_{\phi})$ is an isomorphism.

\medskip

By the above relations, by using the $\slf_{2}(\C)$-representation theory, we have the following result as an application of Theorem \ref{har}.

\begin{thm}
Let $X$ be a $2n$-dimensional compact manifold endowed with a symplectic structure $\omega$.
Let $E_{\phi}=X\times V$ be a trivial vector bundle on $X$ with a connection form $\phi\in \wedge^{1}X\times \End(V)$. Suppose that $\phi$ is flat.

The following two conditions are  equivalent:
\begin{enumerate}
\item for each $1\le k\le n$, the linear map $[\omega^{k}]\cp \sspace \colon H^{n-k}_{dR}(X; E_{\phi})\to H^{n+k}_{dR}(X; E_{\phi})$ is surjective;
\item there is a symplectically-harmonic representative in each cohomology class in $H^{\bullet}_{dR}(X;E_{\phi})$.
\end{enumerate}
\end{thm}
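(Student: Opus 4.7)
The plan is to apply Theorem \ref{har} directly to the bi-differential $\Z$-graded complex $\left(A^\bullet,\, \del,\, \delbar\right) := \left(\wedge^\bullet(X;E_\phi),\, D_\phi,\, D_\phi^\Lambda\right)$. Once we check that all the structural hypotheses of Theorem \ref{har} are in place, the statement will be precisely the translation, in this concrete geometric setting, of the abstract equivalence given there, and the symplectically-harmonic elements of $\wedge^\bullet(X;E_\phi)$ are by definition the harmonic elements $H^\bullet_{\text{hr}}(A^\bullet)$ of the abstract complex.

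First I would identify the two differentials: set $\del := D_\phi$ of degree $+1$ and $\delbar := D_\phi^\Lambda$ of degree $-1$ on the $\Z$-graded vector space $A^\bullet := \wedge^\bullet(X;E_\phi)$, which satisfies $A^k = \{0\}$ for $k<0$ or $k>2n$. Flatness of $\phi$ gives $D_\phi^2 = 0$, while the identities $(D_\phi^\Lambda)^2 = 0$ and $[D_\phi, D_\phi^\Lambda] = 0$ have already been recorded above, so we have a bounded bi-differential complex. Taking $L$, $\Lambda$, $H$ as the natural extensions of the $\slf_2(\C)$-operators to $E_\phi$-valued forms, the commutation relations
\[ [D_\phi, L] = 0, \quad [D_\phi^\Lambda, L] = -D_\phi, \quad [D_\phi^\Lambda, \Lambda] = 0, \quad [D_\phi, \Lambda] = D_\phi^\Lambda \]
and
\[ [\Lambda, L] = H, \quad [L,H] = 2L, \quad [\Lambda, H] = -2\Lambda \]
are exactly \eqref{eq:comm-del-sympl} and \eqref{eq:sl2}; all of these were already stated in the preceding subsection, either being formal consequences of the coefficient-free case or following from the same computation extended $V$-linearly along $\phi$.

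With the hypotheses of Theorem \ref{har} verified, I would then translate its equivalence. Under our identifications, $H^\bullet_\del(A^\bullet) = H^\bullet_{dR}(X;E_\phi)$, and the operator $L$ on cohomology is exactly $[\omega]\cp\sspace$, so $L^k = [\omega^k]\cp\sspace \colon H^{n-k}_{dR}(X;E_\phi) \to H^{n+k}_{dR}(X;E_\phi)$. The subspace $H^\bullet_{\text{hr}}(A^\bullet)$ is, by construction, the space of $D_\phi$-closed and $D_\phi^\Lambda$-closed forms, i.e.\ the symplectically-harmonic $E_\phi$-valued forms, and surjectivity of $H^\bullet_{\text{hr}}(A^\bullet) \to H^\bullet_\del(A^\bullet)$ is the assertion that every class in $H^\bullet_{dR}(X;E_\phi)$ admits a symplectically-harmonic representative. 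So condition (ii) of Theorem \ref{har} matches our condition (i), and condition (i) of Theorem \ref{har} matches our condition (ii), completing the proof.

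The only subtlety worth flagging is that, unlike the untwisted case, the cohomology $H^\bullet_{dR}(X;E_\phi)$ need not be self-dual and the Lefschetz operator $L^k$ need not automatically be an isomorphism even when it is surjective; this is precisely the reason the statement is phrased in terms of surjectivity only and why one invokes Theorem \ref{har} rather than Corollary \ref{sel}. No further obstacle arises, as ellipticity of $D_{D_\phi + D_\phi^\Lambda}$ is not needed for the bare surjectivity statement — one only needs the algebraic $\slf_2(\C)$-structure, which extends to $E_\phi$-valued forms without modification.
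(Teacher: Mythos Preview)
Your proposal is correct and follows exactly the approach the paper takes: the paper simply states that, by the commutation relations established in the preceding subsection and the $\slf_2(\C)$-representation theory, the theorem is an application of Theorem~\ref{har}. Your write-up spells out this application in more detail than the paper does, but the strategy is identical.
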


By the Poincar\'e duality for local systems and Corollary \ref{sel}, we have the following result.
\begin{thm}
Let $X$ be a $2n$-dimensional compact manifold endowed with a symplectic structure $\omega$.
Let $E_{\phi}=X\times V$ be a trivial vector bundle on $X$ with a connection form $\phi\in \wedge^{1}X\times \End(V)$. Suppose that $\phi$ is flat, and that the dual flat bundle of $E_{\phi}$ is isomorphic to $E_{\phi}$ itself.

Then the following two conditions are equivalent:
\begin{enumerate}
\item $(X,\omega)$ satisfies the $E_{\phi}$-twisted Hard Lefschetz Condition;
\item there is a symplectically-harmonic representative in each cohomology class in $H^{\bullet}\left(X;E_{\phi}\right)$.
\end{enumerate}
\end{thm}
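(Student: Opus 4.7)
The plan is to reduce this to Corollary \ref{sel} applied to the bi-differential $\Z$-graded complex $\left(\wedge^{\bullet}(X;E_{\phi}),\, D_{\phi},\, D_{\phi}^{\Lambda}\right)$, with $L=\omega\wedge\sspace$ and $\Lambda$, $H$ as defined above. The commutation relations displayed in this section show that \eqref{eq:comm-del-sympl} and \eqref{eq:sl2} hold in this setting, with $D_\phi$ playing the role of $\del$ and $D_\phi^{\Lambda}$ the role of $\delbar$. The previous theorem already yields the equivalence between the existence of symplectically-harmonic representatives in each class of $H^{\bullet}_{dR}(X;E_{\phi})$ and the surjectivity of $L^{k}\colon H^{n-k}_{dR}(X;E_{\phi})\to H^{n+k}_{dR}(X;E_{\phi})$; what remains is to upgrade surjectivity to bijectivity, which is exactly the content of Corollary \ref{sel} provided the finite-dimensionality and dimension-matching hypotheses are verified.

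First I would check finite-dimensionality of $H^{\bullet}_{dR}(X;E_{\phi})$. Since $X$ is compact and $\phi$ is a flat connection on the (trivial) bundle $E_{\phi}$, the operator $D_{\phi}^{\ast}D_{\phi}+D_{\phi}D_{\phi}^{\ast}$ (for any Hermitian metric on $E_{\phi}$ together with a Riemannian metric on $X$) is elliptic and self-adjoint, and the standard Hodge theory for flat bundles yields an isomorphism of $H^{\bullet}_{dR}(X;E_{\phi})$ with the finite-dimensional kernel of this Laplacian.

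Second, I would establish the dimension equality $\dim H^{n-k}_{dR}(X;E_{\phi})=\dim H^{n+k}_{dR}(X;E_{\phi})$ for every $k\in\{0,\ldots,n\}$. This is the Poincar\'e duality for local systems: one has a non-degenerate pairing
\[
H^{j}_{dR}(X;E_{\phi})\otimes H^{2n-j}_{dR}(X;E_{\phi}^{\ast})\to\R
\]
induced by wedge, contraction of $E_{\phi}$ with $E_{\phi}^{\ast}$, and integration over $X$. By the hypothesis that the dual flat bundle of $E_{\phi}$ is isomorphic to $E_{\phi}$ itself, $H^{2n-j}_{dR}(X;E_{\phi}^{\ast})\simeq H^{2n-j}_{dR}(X;E_{\phi})$, so the pairing gives the required equality of dimensions with $j=n-k$.

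With these two inputs in place, Corollary \ref{sel} applied to $\left(\wedge^{\bullet}(X;E_{\phi}),\, D_{\phi},\, D_{\phi}^{\Lambda}\right)$ yields that the surjectivity of $L^{k}\colon H^{n-k}_{dR}(X;E_{\phi})\to H^{n+k}_{dR}(X;E_{\phi})$ for every $k\in\{0,\ldots,n\}$ is equivalent to $L^{k}$ being an isomorphism, which is precisely the $E_{\phi}$-twisted Hard Lefschetz Condition. Combining this with the previous theorem gives the stated equivalence. The main delicate point, rather than any deep obstacle, will be checking that the non-degenerate Poincar\'e pairing for local systems still behaves well under the chosen isomorphism $E_{\phi}^{\ast}\simeq E_{\phi}$ so that the dimension equality holds symmetrically in $k$ and $-k$ around the middle degree $n$; once this is settled, the rest is formal from Corollary \ref{sel}.
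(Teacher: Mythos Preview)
Your proposal is correct and follows essentially the same route as the paper, which simply invokes Poincar\'e duality for local systems (using the self-duality hypothesis to get $\dim H^{n-k}_{dR}(X;E_{\phi})=\dim H^{n+k}_{dR}(X;E_{\phi})$) and then applies Corollary~\ref{sel}. Your expanded treatment of finite-dimensionality and the duality pairing fills in details the paper leaves implicit, but the argument is the same.
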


\subsection{$D_{\phi}D_{\phi}^{\Lambda}$-Lemma}

We say that $\left(X,\, \omega\right)$ satisfies the {\em $D_{\phi}D_{\phi}^{\Lambda}$-Lemma} if we have
$$ \imm D_{\phi}^{\Lambda}\cap\ker D_{\phi} \;=\; \imm D_{\phi}D_{\phi}^{\Lambda} \;=\; \imm D_{\phi}\cap\ker D_{\phi}^{\Lambda} \;, $$
namely, the natural maps $H^\bullet_{BC}(X;E_\phi) \to H^\bullet_{dR}(X;E_\phi)$ and $H^\bullet_{BC}(X;E_\phi) \to H^\bullet_{\de^\Lambda}(X;E_\phi)$ induced by the identity are isomorphisms.

\medskip

In studying the relations between the twisted Hard Lefschetz Condition and the $D_{\phi}D_{\phi}^{\Lambda}$-Lemma, we need the following result.

\begin{prop}
Let $X$ be a $2n$-dimensional compact manifold endowed with a symplectic structure $\omega$.
Let $E_{\phi}=X\times V$ be a trivial vector bundle on $X$ with a connection form $\phi\in \wedge^{1}X\times \End(V)$. Suppose that $\phi$ is flat.

Suppose that
\begin{itemize}
\item either $E_{\phi}$ is isomorphic to trivial flat bundle $E_{0}$,
\item or $H^{0}_{dR}\left(X; E_{\phi}\right)=\{0\}$.
\end{itemize}
Suppose also that $\left(X,\, \omega\right)$ satisfies the $E_{\phi}$-twisted Hard Lefschetz Condition.

Then we have
$$ \imm D_{\phi}^{\Lambda}\cap\ker D_{\phi} \;=\; \imm D_{\phi}\cap\ker D_{\phi}^{\Lambda} \;. $$
\end{prop}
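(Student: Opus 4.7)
The plan is to apply Proposition \ref{del12} to the bounded bi-differential $\Z$-graded complex $(\wedge^\bullet(X; E_\phi),\, D_\phi,\, D_\phi^\Lambda)$, and then to use the symplectic $\star_\omega$-involution to bootstrap its one-sided conclusion into the required two-sided equality.

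The commutation relations \eqref{eq:comm-del-sympl} and \eqref{eq:sl2} for $(D_\phi, D_\phi^\Lambda, L, \Lambda, H)$ are recorded just above the statement, and the $E_\phi$-twisted Hard Lefschetz Condition on $H^\bullet_{dR}(X; E_\phi)$ is hypothesized; hence the $L^k$-isomorphism hypothesis of Proposition \ref{del12} holds. The remaining hypothesis requires that, for every $\alpha \in \wedge^1(X; E_\phi)$ with $D_\phi D_\phi^\Lambda \alpha = 0$, the form $D_\phi^\Lambda \alpha \in \wedge^0(X; E_\phi)$ be $D_\phi$-exact; since $\wedge^{-1}(X; E_\phi) = 0$, this reduces to $D_\phi^\Lambda \alpha = 0$. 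This is exactly where the dichotomy in the statement enters. If $H^0_{dR}(X; E_\phi) = 0$, then $\ker D_\phi|_{\wedge^0(X; E_\phi)} = H^0_{dR}(X; E_\phi) = 0$ and the claim is automatic. If instead $E_\phi \cong E_0$, a gauge transformation reduces to the untwisted case $D_\phi = d$, $D_\phi^\Lambda = d^\Lambda$: then $dd^\Lambda \alpha = 0$ forces the $V$-valued function $d^\Lambda \alpha$ to be locally constant, and using $d^\Lambda|_{\wedge^1} = \star_\omega d \star_\omega$ together with $\star_\omega^2 = \id$ one has $d^\Lambda \alpha \cdot \omega^n = d \star_\omega \alpha$, whose integral over $X$ vanishes by Stokes' theorem; hence $d^\Lambda \alpha = 0$.

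Having verified both hypotheses, Proposition \ref{del12} yields $\imm D_\phi^\Lambda \cap \ker D_\phi = \imm D_\phi \cap \imm D_\phi^\Lambda$, and since $\imm D_\phi^\Lambda \subseteq \ker D_\phi^\Lambda$ this gives the inclusion $\imm D_\phi^\Lambda \cap \ker D_\phi \subseteq \imm D_\phi \cap \ker D_\phi^\Lambda$. For the reverse inclusion I would use the involution $\star_\omega$, with $\star_\omega^2 = \id$ and $D_\phi^\Lambda|_{\wedge^k} = (-1)^{k+1} \star_\omega D_\phi \star_\omega$: for $\alpha \in \imm D_\phi \cap \ker D_\phi^\Lambda$, these identities place $\star_\omega \alpha \in \imm D_\phi^\Lambda \cap \ker D_\phi$, so by the inclusion just established $\star_\omega \alpha = D_\phi \eta$ for some $\eta$; applying $\star_\omega$ once more gives $\alpha = \pm D_\phi^\Lambda \star_\omega \eta \in \imm D_\phi^\Lambda$, and combined with the automatic $D_\phi \alpha = 0$ this yields $\alpha \in \imm D_\phi^\Lambda \cap \ker D_\phi$. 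The main obstacle is the verification of the second hypothesis of Proposition \ref{del12}; absent either alternative in the statement, a $D_\phi$-closed section of $E_\phi$ need not vanish, so $D_\phi^\Lambda \alpha$ of degree zero need not be $D_\phi$-exact and Proposition \ref{del12} cannot be invoked.
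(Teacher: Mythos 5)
Your proof is correct and follows essentially the same route as the paper: verify the degree-one hypothesis of Proposition \ref{del12} via the stated dichotomy, deduce $\imm D_{\phi}^{\Lambda}\cap\ker D_{\phi}=\imm D_{\phi}\cap\imm D_{\phi}^{\Lambda}$, and transfer to $\imm D_{\phi}\cap\ker D_{\phi}^{\Lambda}$ by conjugating with $\star_{\omega}$. The only (harmless) difference is that in the case $E_{\phi}\simeq E_{0}$ the paper simply defers to the untwisted argument of Cavalcanti, whereas you spell out the Stokes-theorem verification and run both cases uniformly through Proposition \ref{del12}.
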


\begin{proof}
In the case $E_{\phi}\simeq E_{0}$, we have $\left( \wedge^{\bullet}\left(X;E_{\phi}\right), \, D_{\phi} \right) \simeq \left( \wedge^{\bullet}(X)\otimes \C^{n} ,\, \de \right)$.
Hence we can prove as in the ordinary case, see \cite[Proposition 5.4]{cavalcanti}.

\medskip

Suppose $H^{0}_{dR}\left(X; E_{\phi}\right)=\{0\}$.
Let $\alpha\in \wedge^{1}\left(X;E_{\phi}\right)$ with $D_{\phi}D^{\Lambda}_{\phi}\alpha=0$.
Then, by the assumption, we have $D^{\Lambda}_{\phi}\alpha=0$.
Hence by Proposition \ref{del12}, we have
$$ \imm D_{\phi}^{\Lambda}\cap \ker D_{\phi} \;=\; \imm D_{\phi}\cap \imm D_{\phi}^{\Lambda} \;. $$
By this equation and the relation $D_{\phi}^{\Lambda}\lfloor_{\wedge^{k}\left(X;E_{\phi}\right)} = (-1)^{k+1} \star_{\omega} D_{\phi}\star_{\omega}$, we have also
$$ \imm D_{\phi}\cap \ker D^{\Lambda}_{\phi} \;=\; \imm D_{\phi}\cap \imm D_{\phi}^{\Lambda} \;. $$
Hence the proposition follows.
\end{proof}

By this proposition and Proposition \ref{ddlam} and Proposition \ref{delin}, we have the following result.

\begin{cor}
Let $X$ be a $2n$-dimensional compact manifold endowed with a symplectic structure $\omega$.
Let $E_{\phi}=X\times V$ be a trivial vector bundle on $X$ with a connection form $\phi\in \wedge^{1}X\times \End(V)$. Suppose that $\phi$ is flat.
Suppose that the monodromy representation of $E_{\phi}$ is semi-simple.

Consider the following two conditions:
\begin{enumerate}
\item $(X,\omega)$ satisfies the $E_{\phi}$-twisted Hard Lefschetz Condition;
\item  $(X,\omega)$ satisfies the $D_{\phi}D_{\phi}^{\Lambda}$-Lemma.
\end{enumerate}

Then the first condition implies the second one.
Moreover if the dual flat bundle of $E_{\phi}$ is isomorphic to $E_{\phi}$ itself, then the two conditions are equivalent. 
\end{cor}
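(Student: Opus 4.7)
The plan is to reduce the corollary to the preceding Proposition together with Propositions \ref{ddlam} and \ref{delin}, using semi-simplicity to split $E_{\phi}$ into pieces on which the preceding Proposition applies.

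First I would decompose $E_{\phi} = E_{\phi}^{\text{tr}} \oplus E_{\phi}^{\text{nt}}$ as flat bundles, where $E_{\phi}^{\text{tr}}$ is the maximal trivial summand (coming from $V^{\pi_{1}(X)}$) and $E_{\phi}^{\text{nt}}$ is a direct sum of non-trivial irreducible flat bundles, which is possible by semi-simplicity of the monodromy. For each non-trivial irreducible summand $E^{\prime}$, Schur's lemma gives $H^{0}_{dR}(X;E^{\prime})=\{0\}$. The decomposition is parallel and fiberwise, so $D_{\phi}$, $D_{\phi}^{\Lambda}$, $L$, and $\Lambda$ all preserve it; hence it induces a direct sum decomposition of $(\wedge^{\bullet}(X;E_{\phi}), D_{\phi}, D_{\phi}^{\Lambda})$ as bi-differential $\Z$-graded complexes, together with a compatible decomposition of all the twisted cohomologies and the twisted Lefschetz operator.

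To prove (1) implies (2): The $E_{\phi}$-twisted Hard Lefschetz Condition restricts to each summand, and by the first theorem of this subsection, surjectivity of the Lefschetz maps on each summand implies that every $D_{\phi}$-class on that summand admits a symplectically-harmonic representative; hence $H^{\bullet}_{\text{hr}} \to H^{\bullet}_{D_{\phi}}$ is surjective on all of $E_{\phi}$. On each summand, the preceding Proposition applies (either because the summand is trivial or because $H^{0}=\{0\}$), yielding $\imm D_{\phi}^{\Lambda} \cap \ker D_{\phi} = \imm D_{\phi} \cap \ker D_{\phi}^{\Lambda}$, and consequently the same equality holds on $E_{\phi}$. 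Proposition \ref{ddlam}, applied to $(\wedge^{\bullet}(X;E_{\phi}), D_{\phi}, D_{\phi}^{\Lambda})$, then yields the $D_{\phi}D_{\phi}^{\Lambda}$-Lemma. For the converse, under the additional hypothesis $E_{\phi}^{\ast}\simeq E_{\phi}$: Proposition \ref{delin} shows that the $D_{\phi}D_{\phi}^{\Lambda}$-Lemma forces the surjectivity of $H^{\bullet}_{\text{hr}} \to H^{\bullet}_{D_{\phi}}$, i.e., every class has a symplectically-harmonic representative; the second theorem of this subsection, whose equivalence uses precisely the self-duality of $E_{\phi}$, then gives the $E_{\phi}$-twisted Hard Lefschetz Condition.

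The main delicate point is checking that the splitting of $E_{\phi}$ is compatible with all the operators at play, so that the hypotheses of the preceding Proposition can be verified componentwise. This is clean because the decomposition is at the level of flat sub-bundles: the connection and therefore $D_{\phi}, D_{\phi}^{\Lambda}$ split along it, while $L$ and $\Lambda$ act only on the form factor. Everything else is a bookkeeping assembly of results already established in this section.
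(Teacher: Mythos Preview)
Your proposal is correct and follows exactly the route the paper indicates. The paper's own proof is the single sentence ``By this proposition and Proposition \ref{ddlam} and Proposition \ref{delin}, we have the following result,'' and you have correctly unpacked how these ingredients combine: the semi-simplicity hypothesis is used precisely to split $E_{\phi}$ into irreducible flat summands, each of which is either trivial or has vanishing $H^{0}$, so that the preceding Proposition applies componentwise; Proposition \ref{ddlam} then upgrades the resulting equality to the full $D_{\phi}D_{\phi}^{\Lambda}$-Lemma, and Proposition \ref{delin} together with the self-dual version of the harmonic-representative theorem handles the converse.
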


Arguing as in \cite[Proposition 3.13]{tseng-yau-1}, one has the following result.

\begin{cor}
Let $X$ be a $2n$-dimensional compact manifold endowed with a symplectic structure $\omega$.
Let $E_{\phi}=X\times V$ be a trivial vector bundle on $X$ with a connection form $\phi\in \wedge^{1}X\times \End(V)$. Suppose that $\phi$ is flat.
Suppose that the monodromy representation of $E_{\phi}$ is semi-simple.

Consider the following two conditions :
\begin{enumerate}
\item $(X,\omega)$ satisfies the $E_{\phi}$-twisted Hard Lefschetz Condition;
\item the map $H_{BC}^{\bullet}\left(X;E_{\phi}\right)\to H_{dR}^{\bullet}\left(X;E_{\phi}\right)$ is an isomorphism.
\end{enumerate}

Then the first condition imply the second one.
Moreover if the dual flat bundle of $E_{\phi}$ is isomorphic to $E_{\phi}$ itself, then the two conditions are equivalent. 
\end{cor}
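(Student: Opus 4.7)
The plan is to reduce the statement to the immediately preceding corollary, together with the harmonic-representative criterion for the twisted Hard Lefschetz Condition already established earlier in this section.

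For the implication $(1)\Rightarrow(2)$, I would invoke the immediately preceding corollary directly: under the blanket semi-simplicity hypothesis, the $E_{\phi}$-twisted Hard Lefschetz Condition forces the $D_{\phi}D_{\phi}^{\Lambda}$-Lemma. By the very definition of the latter, the triple equality $\imm D_{\phi}^{\Lambda}\cap\ker D_{\phi}=\imm D_{\phi}D_{\phi}^{\Lambda}=\imm D_{\phi}\cap\ker D_{\phi}^{\Lambda}$ encodes exactly the statement that the natural map $H_{BC}^{\bullet}(X;E_{\phi})\to H_{dR}^{\bullet}(X;E_{\phi})$ is an isomorphism, so $(2)$ follows. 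This direction requires nothing beyond the semi-simplicity already in force; in particular it does not use self-duality.

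For the converse, assume in addition that the dual flat bundle of $E_{\phi}$ is isomorphic to $E_{\phi}$ itself, and suppose $\iota\colon H_{BC}^{\bullet}(X;E_{\phi})\to H_{dR}^{\bullet}(X;E_{\phi})$ is an isomorphism. In particular $\iota$ is surjective, which, unwinding the definition of $H_{BC}^{\bullet}(X;E_{\phi})$, says exactly that every class in $H_{dR}^{\bullet}(X;E_{\phi})$ admits a representative $\alpha$ satisfying $D_{\phi}\alpha=D_{\phi}^{\Lambda}\alpha=0$, namely a symplectically-harmonic representative. The second theorem of the preceding subsection (the twisted analogue of the Mathieu--Yan result, derived via Corollary \ref{sel} together with Poincar\'e duality for local systems, and valid precisely under the self-duality assumption) then converts this into the $E_{\phi}$-twisted Hard Lefschetz Condition, giving $(1)$.

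The proof is essentially a bookkeeping combination of the preceding corollary and the twisted harmonic-representative theorem; no new technical ingredient is introduced. The only conceptual subtlety, as emphasised in the remark following Theorem \ref{har}, is that self-duality is genuinely needed for the converse: without it, one can only deduce surjectivity of the Lefschetz maps on $H_{dR}^{\bullet}(X;E_{\phi})$ from the existence of harmonic representatives, not their full bijectivity, so condition $(2)$ alone would not suffice to recover $(1)$.
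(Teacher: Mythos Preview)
Your argument is correct and follows the route the paper intends: derive $(1)\Rightarrow(2)$ from the preceding corollary (twisted Hard Lefschetz $\Rightarrow$ $D_{\phi}D_{\phi}^{\Lambda}$-Lemma), and obtain the converse under self-duality by reading surjectivity of $H_{BC}^{\bullet}\to H_{dR}^{\bullet}$ as existence of symplectically-harmonic representatives and then applying the twisted Mathieu--Yan theorem based on Corollary~\ref{sel}. The paper itself merely cites \cite[Proposition~3.13]{tseng-yau-1} without further detail, so you have in fact made the argument explicit. One small overstatement: the triple equality does not ``encode exactly'' the single isomorphism $H_{BC}^{\bullet}\to H_{dR}^{\bullet}$ (it also controls the map to $H_{D_{\phi}^{\Lambda}}^{\bullet}$), but only the forward implication is used and that is unproblematic.
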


The following result is a straightforward corollary.

\begin{cor}
Let $X$ be a $2n$-dimensional compact manifold endowed with a symplectic structure $\omega$.
Let $E_{\phi}=X\times V$ be a trivial vector bundle on $X$ with a connection form $\phi\in \wedge^{1}X\times \End(V)$. Suppose that $\phi$ is flat.
Suppose that the monodromy representation of $E_{\phi}$ is semi-simple.
We assume that $(X,\omega)$ satisfies the $E_{\phi}$-twisted Hard Lefschetz Condition.

Then for each $k\in\Z$, we have $\dim_\R H_{BC}^{k}\left(X;E_{\phi}\right)=\dim_\R H_{dR}^{k}\left(X;E_{\phi}\right)$.
\end{cor}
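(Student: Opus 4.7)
The plan is simply to invoke the preceding corollary. Its hypotheses coincide verbatim with those of the present statement---namely, $\phi$ flat, monodromy semi-simple, and the $E_\phi$-twisted Hard Lefschetz Condition---so it applies without modification. Its conclusion is precisely that the natural map
\[
\iota\colon H_{BC}^{\bullet}(X;E_\phi)\longrightarrow H_{dR}^{\bullet}(X;E_\phi)
\]
induced by the identity is an isomorphism of $\Z$-graded $\R$-vector spaces. Both sides are finite-dimensional in each degree, by the ellipticity of the fourth-order operator $D_{D_\phi+D_\phi^\Lambda}$ established earlier in this section, so restricting $\iota$ to degree $k$ gives an isomorphism of finite-dimensional spaces. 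Taking dimensions yields $\dim_\R H_{BC}^{k}(X;E_\phi)=\dim_\R H_{dR}^{k}(X;E_\phi)$ for every $k\in\Z$, as required.

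The obstacle here is conceptual rather than computational: there is none, since all of the work has already been carried out in the preceding chain of corollaries. If instead one wanted to assemble the argument from scratch, the strategy would run as follows. One first applies the $\slf_2(\C)$-representation theory attached to $\bigl(\wedge^\bullet(X;E_\phi),\,D_\phi,\,D_\phi^\Lambda\bigr)$ via the operators $L$, $\Lambda$, $H$ and their commutation relations with $D_\phi$ and $D_\phi^\Lambda$, using Theorem \ref{har} and Corollary \ref{sel}---the self-duality required by the latter being supplied by semi-simplicity of the monodromy together with Poincar\'e duality for local systems---to convert twisted HLC into the existence of a symplectically-harmonic representative in every class of $H_{dR}^{\bullet}(X;E_\phi)$. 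One then combines this with Proposition \ref{ddlam} to obtain $\imm D_\phi^\Lambda\cap\ker D_\phi=\imm D_\phi D_\phi^\Lambda=\imm D_\phi\cap\ker D_\phi^\Lambda$, i.e.\ the $D_\phi D_\phi^\Lambda$-Lemma, and reads off from this (or, in the reverse direction, from Proposition \ref{delin}) both the injectivity and the surjectivity of $\iota$, whence the desired dimension equality in every degree.
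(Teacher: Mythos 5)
Your proof is correct and is exactly the paper's intended argument: the paper labels this statement ``a straightforward corollary'' of the immediately preceding one, which under the same hypotheses (flat $\phi$, semi-simple monodromy, twisted Hard Lefschetz Condition) asserts that the natural map $H^{\bullet}_{BC}(X;E_{\phi})\to H^{\bullet}_{dR}(X;E_{\phi})$ is an isomorphism, whence the degreewise equality of dimensions. Your supplementary sketch of how that corollary is itself assembled also matches the paper's chain of results.
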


 In \cite{Sim}, Simpson showed the following result.

\begin{thm}[{\cite[Lemma 2.6]{Sim}}]
Let $(X,\omega)$ be a compact K\"ahler manifold and $E_{\phi}$ a flat bundle over $X$ whose monodromy representation is semi-simple.
Then  $(X,\omega)$ satisfies the $E_{\phi}$-twisted Hard Lefschetz Condition.
\end{thm}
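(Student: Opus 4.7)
The plan is to invoke the non-abelian Hodge correspondence. Since the monodromy representation of $E_\phi$ is semi-simple, by Corlette's theorem there exists a harmonic metric $h$ on $E_\phi$. With respect to $h$, one decomposes the flat connection as $D_\phi = \partial_h + \overline{\partial}_h + \theta + \theta^{\ast h}$, where $(\overline{\partial}_h, \theta)$ defines a Higgs bundle structure and $\theta^{\ast h}$ is the $h$-adjoint of the Higgs field. Setting $d' := \partial_h + \theta^{\ast h}$ and $d'' := \overline{\partial}_h + \theta$, one has $D_\phi = d' + d''$ with $(d')^2 = (d'')^2 = d' d'' + d'' d' = 0$, so that $(\wedge^\bullet(X;E_\phi),\, d',\, d'')$ becomes a bi-differential bi-graded complex of the type studied in Section 1.

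The next step is to establish the twisted K\"ahler identities for harmonic bundles, namely $[\Lambda, d'] = \sqrt{-1}\, (d'')^\ast$ and $[\Lambda, d''] = -\sqrt{-1}\, (d')^\ast$, from which one deduces the identity of Laplacians $\Delta_{D_\phi} = 2\,\Delta_{d'} = 2\,\Delta_{d''}$. These identities can be obtained by first checking them when $\theta = 0$ (the ordinary K\"ahler case on a unitary flat bundle) and then verifying that the correction terms coming from the Higgs field $\theta$ and its adjoint fit into the same formal framework, using that $[L,\theta] = [L,\theta^{\ast h}] = 0$ because $\theta$ is a $(1,0)$-form with values in $\End(E_\phi)$ and $L$ is multiplication by the $(1,1)$-form $\omega$.

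Once these identities are in place, the $D_\phi$-Laplacian commutes with both $L$ and $\Lambda$, so the finite-dimensional space $\mathcal{H}^\bullet_{D_\phi}(X;E_\phi)$ of $D_\phi$-harmonic forms is stable under the operators $L$, $\Lambda$, $H$ defined in Section \ref{ope}. This endows $\mathcal{H}^\bullet_{D_\phi}(X;E_\phi)$ with a representation of $\slf_2(\C)$ of finite $Z$-spectrum. Applying the structure results on such representations recalled before Theorem \ref{har}, one deduces that $L^k \colon \mathcal{H}^{n-k}_{D_\phi}(X;E_\phi) \to \mathcal{H}^{n+k}_{D_\phi}(X;E_\phi)$ is an isomorphism for each $k \in \{0, \ldots, n\}$. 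Since Hodge theory for the elliptic operator $\Delta_{D_\phi}$ on the compact manifold $X$ yields the isomorphism $\mathcal{H}^\bullet_{D_\phi}(X;E_\phi) \simeq H^\bullet_{dR}(X;E_\phi)$ compatible with the action of $L$, the $E_\phi$-twisted Hard Lefschetz Condition follows.

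The main obstacle is the verification of the twisted K\"ahler identities for harmonic bundles: while formally parallel to the classical Nakano identities, they require one to combine the K\"ahler identities for the Chern connection of $(E_\phi, h)$ with the commutation relations between $\Lambda$ and the Higgs field. The existence of the harmonic metric itself, which is the deep input of Corlette's theorem, would be invoked as a black box; its role is precisely to allow one to split $D_\phi$ as a sum of two operators whose Laplacians coincide, without which the bi-grading structure on $\wedge^\bullet(X;E_\phi)$ and the resulting $\slf_2(\C)$-action on cohomology would not be available.
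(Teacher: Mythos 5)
Your proposal is correct, but note that the paper itself offers no proof of this statement: it is quoted verbatim as \cite[Lemma 2.6]{Sim}, and your outline is precisely Simpson's own argument in that reference (Corlette's harmonic metric, the decomposition $D_\phi=d'+d''$ with $d''=\overline{\del}_h+\theta$, the first-order K\"ahler identities yielding $\Delta_{D_\phi}=2\Delta_{d'}=2\Delta_{d''}$, and the $\slf_2(\C)$-action on $D_\phi$-harmonic forms). The only points you leave implicit — that pluriharmonicity of the Corlette metric is what forces $(d')^2=(d'')^2=[d',d'']=0$, and the verification of the twisted K\"ahler identities — are exactly the content of Simpson's Lemmas 2.1--2.2, so the sketch is faithful to the cited source.
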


\section{Twisted cohomologies on solvmanifolds}\label{sec:twisted-sympl-cohom-solvmfd}

In this section, we study twisted symplectic cohomologies for special solvmanifolds.

\medskip

Let $G$ be a connected simply-connected solvable Lie group. Denote by $\g$ its associated Lie algebra, and by $\rho\colon G\to \GL(V_{\rho})$ a representation on a real or complex vector space $V_{\rho}$.

We consider the cochain complex $\wedge^{\bullet} \g^{\ast}$ with the  derivation $\de$ which is the dual to the Lie bracket of $\g$.
Then the pair
$$ \left( \wedge^{\bullet} \g^{\ast}\otimes V_{\rho},\, \de_{\rho}:=\de+\rho_{\ast} \right) $$
is a differential graded module  over the differential graded algebra $\wedge^{\bullet} \g^{\ast}$.
Here $\rho_{\ast} \in \g^{\ast}\otimes \mathfrak{gl}(V_{\rho})$ is the derivation of $\rho$.
We can consider the cochain complex $\left( \wedge^{\bullet} \g^{\ast}\otimes V_{\rho}, \, \de_{\rho} \right)$ given by the twisted $G$-invariant differential forms on $G$.

\medskip

Suppose that $G$ has a lattice $\Gamma$.
Since $\pi_{1}(\solvmfd)=\Gamma$, we have a flat vector bundle $E_{\rho_{\ast}}$ with flat connection $D_{\rho_{\ast}}$ on $\solvmfd$ whose monodromy is $\rho\lfloor_{\Gamma}$.
We can regard $E_{\rho_{\ast}}$ as the  flat bundle $\solvmfd\times V_{\rho}$ with the  connection form $\rho_{\ast}$, and we have the inclusion
\[ \iota\colon \wedge^{\bullet}\g^{\ast}\otimes V_{\rho} \hookrightarrow \wedge^{\bullet}\left( \solvmfd; E_{\rho_{\ast}} \right) \]
of cochain complexes.
Consider the natural extension
$$ \mu\colon \wedge^{\bullet}\left( \solvmfd; E_{\rho_{\ast}} \right)\to \wedge^{\bullet}\g^{\ast}\otimes V_{\rho} $$
of the F.~A. Belgun symmetrization map, \cite[Theorem 7]{belgun}.
Then this map also satisfies
$$ D_{\rho_{\ast}} \circ \mu \;=\; \mu\circ D_{\rho_{\ast}} \qquad \text{ and } \qquad \mu\circ \iota \;=\; \id \;. $$

\subsection{Twisted symplectic cohomologies of special solvmanifolds}

In some special cases, the inclusion $\iota\colon \wedge^{\bullet}\g^{\ast}\otimes V_{\rho} \hookrightarrow \wedge^{\bullet}\left( \solvmfd; E_{\rho_{\ast}} \right)$ is a quasi-isomorphism.

\begin{thm}[{\cite[Corollary 4.2]{hattori}, \cite[Theorem 8.2, Corollary 8.1]{mostow}}]\label{lietw}
Let $G$ be a connected simply-connected solvable Lie group with a lattice $\Gamma$.

If:
\begin{description}
 \item[(H)] either the representation $\rho\oplus\Ad$ is $\R$-triangular, \cite{hattori},
 \item[(M)] or the two images $(\rho\oplus\Ad)(G)$ and $(\rho\oplus\Ad)(\Gamma)$ have the same Zariski-closure in $GL(V_{\rho})\times \Aut(\g_{\C})$,
\end{description}
then the inclusion $\iota \colon \wedge^{\bullet}\g^{\ast}_{\C}\otimes V_{\rho} \hookrightarrow \wedge^{\bullet}\left( \solvmfd; E_{\rho_{\ast}} \right)$ induces the isomorphism
$$ H^{\bullet}_{dR}(\g; V_{\rho}) \stackrel{\simeq}{\to} H^{\bullet}_{dR}(\solvmfd; E_{\rho_{\ast}}) \;. $$
\end{thm}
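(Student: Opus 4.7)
The plan is to reduce the statement to the two classical theorems being cited, since essentially no new work is required once the correct identifications are in place. The first step is to rewrite the target cohomology in invariant-forms language. Because $E_{\rho_\ast}=(G\times V_\rho)/\Gamma$ with $\Gamma$ acting diagonally (through left-translation on $G$ and through $\rho$ on $V_\rho$), the twisted de Rham complex $(\wedge^{\bullet}(\solvmfd;E_{\rho_\ast}),\,D_{\rho_\ast})$ is canonically isomorphic to $(\wedge^{\bullet} G\otimes V_\rho)^\Gamma$ equipped with the natural twisted differential. Restricting further to $G$-left-invariants recovers $(\wedge^{\bullet}\g_\C^\ast\otimes V_\rho,\,\de+\rho_\ast)$, and the map $\iota$ becomes the inclusion of $G$-invariants into $\Gamma$-invariants. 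This reduction turns the theorem into a statement about when $G$-invariant forms compute the full cohomology with coefficients.

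Under hypothesis (H), I would argue by induction along a filtration of $V_\rho\oplus\g_\C$ whose graded pieces are one-dimensional $G$-modules with real characters; such a filtration exists precisely because $\rho\oplus\Ad$ is $\R$-triangular, and the condition on $\rho\oplus\Ad$ (rather than on $\rho$ and $\Ad$ separately) is what guarantees the compatibility between the filtration of $V_\rho$ and the filtration of $\g_\C$ needed in the iterative argument. Applying the spectral-sequence argument of \cite{hattori} to each graded piece reduces the statement to the one-dimensional real-character case, which is the content of \cite[Corollary~4.2]{hattori}. Under hypothesis (M), the assumption that the Zariski-closures of $(\rho\oplus\Ad)(G)$ and $(\rho\oplus\Ad)(\Gamma)$ coincide is exactly the hypothesis of \cite[Theorem~8.2]{mostow}, and the desired isomorphism is the direct conclusion of \cite[Corollary~8.1]{mostow}.

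The main obstacle, already resolved in the cited papers rather than in the present argument, is that a non-trivial twist could a priori destroy the quasi-isomorphism between invariant and all forms. The hypotheses are formulated in terms of $\rho\oplus\Ad$ specifically so that the $\rho$-twist enters the same iterative filtration, respectively the same Zariski-closure structure, that is already required to control the adjoint twist inherent in the computation of $H^\bullet_{dR}(\solvmfd;\R)$ via $\wedge^\bullet\g^\ast$. Once the identification of the twisted de Rham complex with $(\wedge^\bullet G\otimes V_\rho)^\Gamma$ is carefully checked against the flat structure on $E_{\rho_\ast}$, the proof reduces to direct invocation of the results of Hattori and Mostow.
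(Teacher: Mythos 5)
Your proposal is correct and matches the paper's treatment: the paper gives no proof of Theorem \ref{lietw} at all, presenting it purely as a citation of Hattori's \cite[Corollary 4.2]{hattori} and Mostow's \cite[Theorem 8.2, Corollary 8.1]{mostow}. Your identification of $\wedge^{\bullet}\left(\solvmfd; E_{\rho_{\ast}}\right)$ with the $\Gamma$-invariant $V_\rho$-valued forms on $G$ and the subsequent direct invocation of the two cited results is exactly the intended reduction.
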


\medskip

We suppose that $\solvmfd$ admits a $G$-left-invariant symplectic structure $\omega\in \wedge^{2}\g^{\ast}$.
Then the operators $L$, $\Lambda$, $\star_{\omega}$, and $D_{\phi}^{\Lambda}$ are defined on $\wedge^{\bullet}\g^{\ast}\otimes V_{\rho}$.
We consider the cohomologies $H^{\bullet}_{D_{\rho}}\left( \g; V_{\rho} \right)$, $H^{\bullet}_{D_{\rho}^{\Lambda}}\left( \g; V_{\rho} \right)$
and $H^{\bullet}_{BC}\left( \g; V_{\rho} \right)$ of $\wedge^{\bullet}\g^{\ast}_{\C}\otimes V_{\rho}$.

We have the following result.

\begin{thm}\label{thm:twisted-sympl-bc-cohom-solvmfd}
Let $\solvmfd$ be a solvmanifold endowed with a $G$-left-invariant symplectic structure $\omega\in \wedge^{2}\g^{\ast}$.
We suppose that either condition {\itshape (H)} or condition {\itshape (M)} in Theorem \ref{lietw} holds.

Then the inclusion $\iota\colon \wedge^{\bullet}\g^{\ast}_{\C}\otimes V_{\rho}\subset \wedge^{\bullet}\left( \solvmfd; E_{\rho_{\ast}} \right)$ induces the isomorphism
$$ H^{\bullet}_{BC}\left( \g; V_{\rho} \right) \stackrel{\simeq}{\to} H^{\bullet}_{BC}\left( \solvmfd; E_{\rho_{\ast}} \right) \;. $$
\end{thm}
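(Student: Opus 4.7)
The plan is to apply Corollary \ref{ives} to the inclusion
\[
\iota\colon B^{\bullet} \;:=\; \wedge^{\bullet}\g^{\ast}_{\C}\otimes V_{\rho} \;\hookrightarrow\; A^{\bullet} \;:=\; \wedge^{\bullet}\!\left(\solvmfd; E_{\rho_{\ast}}\right)
\]
viewed as a map of bi-differential $\Z$-graded complexes with $\del := D_{\rho_{\ast}}$ (degree $+1$) and $\delbar := D_{\rho_{\ast}}^{\Lambda}$ (degree $-1$). Both complexes are bounded: $A^{k} = \{0\}$ outside $\{0,\ldots,2n\}$ since $\solvmfd$ is $2n$-dimensional, and $B^{\bullet}$ is finite-dimensional as a vector space. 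The bi-differential axioms $D_{\rho_{\ast}}^{2}=0$, $(D_{\rho_{\ast}}^{\Lambda})^{2}=0$, and $[D_{\rho_{\ast}},D_{\rho_{\ast}}^{\Lambda}]=0$ hold on $A^{\bullet}$ by flatness of $\rho$ and the relations recalled in Section \ref{sec:twisted-sympl-cohom}, and they restrict to $B^{\bullet}$ because $\omega\in\wedge^{2}\g^{\ast}$ is $G$-left-invariant.

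I would then verify the three hypotheses of Corollary \ref{ives} in turn. For hypothesis \textit{(i)}, finite-dimensionality of $H^{\bullet}_{D_{\rho_{\ast}}}(B^{\bullet})$ and $H^{\bullet}_{D_{\rho_{\ast}}^{\Lambda}}(B^{\bullet})$ is automatic since $B^{\bullet}$ is finite-dimensional; for $A^{\bullet}$, finite-dimensionality of $H^{\bullet}_{D_{\rho_{\ast}}}(A^{\bullet})$ follows from ellipticity of $D_{\rho_{\ast}}$ on the compact manifold $\solvmfd$, and finite-dimensionality of $H^{\bullet}_{D_{\rho_{\ast}}^{\Lambda}}(A^{\bullet})$ then follows from the symplectic $\star_{\omega}$-duality $H^{\bullet}_{D_{\rho_{\ast}}}(A^{\bullet})\simeq H^{2n-\bullet}_{D_{\rho_{\ast}}^{\Lambda}}(A^{\bullet})$ recalled in Section \ref{sec:twisted-sympl-cohom}. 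For hypothesis \textit{(ii)}, the isomorphism on $D_{\rho_{\ast}}$-cohomology is precisely the content of Theorem \ref{lietw}. For the $D_{\rho_{\ast}}^{\Lambda}$-cohomology, since $\omega$ and the associated Poisson bivector $\Pi=\omega^{-1}$ are $G$-left-invariant, $\star_{\omega}$ sends $B^{\bullet}$ into itself and satisfies $\star_{\omega}^{2}=\id$ and $D_{\rho_{\ast}}^{\Lambda}\lfloor_{\wedge^{k}} = (-1)^{k+1}\star_{\omega}D_{\rho_{\ast}}\star_{\omega}$ on both sides; assembling the commutative square
\[
\xymatrix@C=3em{
H^{\bullet}_{D_{\rho_{\ast}}}(\g;V_{\rho}) \ar[r]^{\simeq}_{\iota} \ar[d]^{\simeq}_{\star_{\omega}} & H^{\bullet}_{D_{\rho_{\ast}}}(\solvmfd;E_{\rho_{\ast}}) \ar[d]^{\simeq}_{\star_{\omega}} \\
H^{2n-\bullet}_{D_{\rho_{\ast}}^{\Lambda}}(\g;V_{\rho}) \ar[r]_{\iota} & H^{2n-\bullet}_{D_{\rho_{\ast}}^{\Lambda}}(\solvmfd;E_{\rho_{\ast}})
}
\]
yields the desired isomorphism on $D_{\rho_{\ast}}^{\Lambda}$-cohomology.

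For hypothesis \textit{(iii)}, I would take $\mu$ to be the extension of the Belgun symmetrization map already introduced in Section \ref{sec:twisted-sympl-cohom}, for which the text records $D_{\rho_{\ast}}\circ\mu=\mu\circ D_{\rho_{\ast}}$ and $\mu\circ\iota=\id_{B^{\bullet}}$. The remaining point is that $\mu$ commutes with $D_{\rho_{\ast}}^{\Lambda}$, which I expect to be the main thing to check carefully but not hard: since $\mu$ is obtained by integrating against a $G$-bi-invariant volume form on $G$, it commutes with every operator induced by a $G$-left-invariant tensor; in particular, as $\Pi=\omega^{-1}$ is $G$-left-invariant, $\Lambda=-\iota_{\Pi}$ commutes with $\mu$, and therefore so does $D_{\rho_{\ast}}^{\Lambda}=[D_{\rho_{\ast}},\Lambda]$.

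With all three hypotheses of Corollary \ref{ives} established, the corollary directly yields the isomorphism $H^{\bullet}_{BC}(\g;V_{\rho}) \stackrel{\simeq}{\to} H^{\bullet}_{BC}(\solvmfd;E_{\rho_{\ast}})$ induced by $\iota$. The only delicate point throughout is the invariance of the auxiliary operators ($\Lambda$, $\star_{\omega}$) under the symmetrization $\mu$ and under the restriction to $\wedge^{\bullet}\g^{\ast}\otimes V_{\rho}$, which ultimately reduces to the $G$-left-invariance of $\omega$.
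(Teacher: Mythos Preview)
Your proposal is correct and follows essentially the same approach as the paper: both verify the hypotheses of Corollary \ref{ives} via Theorem \ref{lietw} for $D_{\rho_{\ast}}$-cohomology, the $\star_{\omega}$-duality for $D_{\rho_{\ast}}^{\Lambda}$-cohomology, and the Belgun symmetrization map $\mu$ (which commutes with $\Lambda$ and hence with $D_{\rho_{\ast}}^{\Lambda}$ because $\omega$ is $G$-left-invariant) for the retraction. The only minor slip is that $\mu$ is introduced at the beginning of Section \ref{sec:twisted-sympl-cohom-solvmfd}, not Section \ref{sec:twisted-sympl-cohom}.
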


\begin{proof}
By Theorem \ref{lietw}, we have that $\iota$ induces the isomorphism
$H^{\bullet}_{D_{\rho}}\left( \g; V_{\rho} \right) \stackrel{\simeq}{\to} H^{\bullet}_{D_{\rho}}\left( \solvmfd; E_{\rho_{\ast}} \right)$.
By using the symplectic-$\star$-operator $\star_{\omega} \colon \wedge^{\bullet}\g^\ast\otimes V_\rho \to \wedge^{\dim X-\bullet}\g^\ast\otimes V_\rho$, we have that $\iota$ induces the isomorphism
$H^{\bullet}_{D^{\Lambda}_{\rho}}\left( \g; V_{\rho} \right) \stackrel{\simeq}{\to} H^{\bullet}_{D^{\Lambda}_{\rho}}\left( \solvmfd; E_{\rho_{\ast}} \right)$ as in the proof of Theorem \ref{thm:sympl-cohom-compl-solv}.

Consider the F.~A. Belgun symmetrization map $\mu\colon \wedge^{\bullet}\left( \solvmfd; E_{\rho_{\ast}} \right)\to \wedge^{\bullet}\g^{\ast}\otimes V_{\rho}$ as above.
Then, since $\omega$ is $G$-left-invariant, $\mu$ commutes with the operators $L$, $\Lambda$, $\star_{\omega}$, and $D_{\phi}^{\Lambda}$.
Hence we get that $\iota$ induces the isomorphism
$$H^{\bullet}_{BC}\left( \g; V_{\rho} \right) \stackrel{\simeq}{\to} H^{\bullet}_{BC}\left( \solvmfd; E_{\rho_{\ast}} \right)$$
from Corollary \ref{ives}.
\end{proof}

\begin{cor}
Let $\solvmfd$ be a solvmanifold endowed with a $G$-left-invariant symplectic structure $\omega\in \wedge^{2}\g^{\ast}$.
We suppose that either condition {\itshape (H)} or condition {\itshape (M)} in Theorem \ref{lietw} holds.

Then the inclusion $\iota\colon \wedge^{\bullet}\g^{\ast}_{\C}\otimes V_{\rho}\subset \wedge^{\bullet}\left( \solvmfd; E_{\rho_{\ast}} \right)$ induces the isomorphism
$$ H^{\bullet}_{A}\left( \g; V_{\rho} \right) \stackrel{\simeq}{\to} H^{\bullet}_{A}\left( \solvmfd; E_{\rho_{\ast}} \right) \;. $$
\end{cor}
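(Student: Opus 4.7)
The plan is to derive this Aeppli statement from Theorem \ref{thm:twisted-sympl-bc-cohom-solvmfd} by means of the Hodge-$*$ duality $*_g\colon H^\bullet_{BC}(X;E_\phi) \stackrel{\simeq}{\to} H^{2n-\bullet}_A(X;E_\phi^\ast)$ established in Section \ref{sec:twisted-sympl-cohom}, applied to the dual local system $E_{\rho_\ast}^\ast = E_{(\rho^\ast)_\ast}$, where $\rho^\ast$ denotes the contragredient representation on $V_\rho^\ast$.

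First, I would choose a $G$-left-invariant $\omega$-compatible almost-complex structure $J$ (available on the Lie-algebra level by the linear-algebraic argument in \cite[Proposition 12.6]{cannasdasilva}) and form the $G$-left-invariant Hermitian metric $g := \omega(\sspace,\, J\ssspace)$. The associated Hodge-$*$-operator is then $G$-left-invariant, hence restricts to $\wedge^\bullet\g^\ast$ and, via the same intertwining $*_g D_{D_\phi+D_\phi^\Lambda} = D_{D_\phi D_\phi^\Lambda} *_g$ used in Section \ref{sec:twisted-sympl-cohom}, yields the Lie-algebra level duality
$$ *_g\colon H^\bullet_{BC}(\g; V_{\rho^\ast}) \stackrel{\simeq}{\to} H^{2n-\bullet}_A(\g; V_\rho) \;. $$

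Next, I would verify that $\rho^\ast$ still satisfies one of the hypotheses of Theorem \ref{lietw}: for condition \textbf{(H)}, the reversed dual basis on $V_\rho^\ast$ makes $\rho^\ast(g) = \rho(g^{-1})^T$ simultaneously upper-triangular with real diagonal whenever $\rho\oplus\Ad$ is $\R$-triangular, so $\rho^\ast\oplus\Ad$ is $\R$-triangular too; for condition \textbf{(M)}, the algebraic automorphism $(a,b)\mapsto((a^{-1})^T, b)$ of $\GL(V_\rho)\times\Aut(\g_\C)$ carries the Zariski closures of $(\rho\oplus\Ad)(G)$ and $(\rho\oplus\Ad)(\Gamma)$ bijectively onto those of $(\rho^\ast\oplus\Ad)(G)$ and $(\rho^\ast\oplus\Ad)(\Gamma)$, preserving their equality. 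Hence Theorem \ref{thm:twisted-sympl-bc-cohom-solvmfd} applied to $\rho^\ast$ furnishes the isomorphism $\iota\colon H^\bullet_{BC}(\g; V_{\rho^\ast}) \stackrel{\simeq}{\to} H^\bullet_{BC}(\solvmfd; E_{(\rho^\ast)_\ast})$.

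Finally, I would assemble the commutative square
$$
\xymatrix{
H^\bullet_{BC}(\g; V_{\rho^\ast}) \ar[r]^{\iota}_{\simeq} \ar[d]^{*_g}_{\simeq} & H^\bullet_{BC}(\solvmfd; E_{(\rho^\ast)_\ast}) \ar[d]^{*_g}_{\simeq} \\
H^{2n-\bullet}_A(\g; V_\rho) \ar[r]^{\iota} & H^{2n-\bullet}_A(\solvmfd; E_{\rho_\ast}) \;,
}
$$
whose commutativity follows from $*_g\circ\iota = \iota\circ *_g$, a direct consequence of the $G$-left-invariance of $g$. Three of the four arrows are isomorphisms, so the bottom arrow is too, which is the claim. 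The only non-formal step is verifying that conditions \textbf{(H)} and \textbf{(M)} pass to the contragredient, and this is the main (though routine) point to be addressed.
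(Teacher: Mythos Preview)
Your proposal is correct and follows essentially the same strategy as the paper: pass to the contragredient representation, verify that conditions \textbf{(H)}/\textbf{(M)} are preserved under dualizing, invoke Theorem \ref{thm:twisted-sympl-bc-cohom-solvmfd} for $\rho^\ast$, and transport the result back via the $*_g$-duality between Bott--Chern and Aeppli cohomology. The one presentational difference is that the paper first secures injectivity of $\iota$ on $H^\bullet_A$ using the Belgun symmetrization map $\mu$, and then only needs an \emph{abstract} isomorphism $H^\bullet_A(\g;V_\rho)\cong H^\bullet_A(\solvmfd;E_{\rho_\ast})$ (via the chain $*_g$, $\iota$, $*_g^{-1}$) to conclude; you instead argue that the square with vertical arrows $*_g$ actually commutes, which yields the result in one stroke without appealing to $\mu$ --- this is exactly the argument the paper itself uses in the untwisted case (Step 4 of Theorem \ref{thm:sympl-cohom-compl-solv}). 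Just be aware that the $*_g$-isomorphism on cohomology is defined via harmonic representatives (kernels of the $4^{\text{th}}$-order Laplacians), so ``$*_g\circ\iota=\iota\circ *_g$ on forms'' translates into commutativity on cohomology because a left-invariant form that is harmonic for the Lie-algebra Laplacian is automatically harmonic for the manifold Laplacian, the operators being $G$-left-invariant.
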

\begin{proof}
By the F.~A. Belgun symmetrization map $\mu\colon \wedge^{\bullet}\left( \solvmfd; E_{\rho_{\ast}} \right)\to \wedge^{\bullet}\g^{\ast}\otimes V_{\rho}$ as above,
 the induced map $\iota\colon H^{\bullet}_{A}\left( \g; V_{\rho} \right) \to H^{\bullet}_{A}\left( \solvmfd; E_{\rho_{\ast}} \right)$ is injective.
Hence it is sufficient to show that there exists an isomorphism $H^\bullet_{A}(\g;V_{\rho}) \stackrel{\simeq}{\to} H^\bullet_{A}(\solvmfd; E_{\rho_{\ast}})$. 

 Let $J$ be a $G$-left-invariant $\omega$-compatible almost-complex structure on $X$, (see, e.g., \cite[Proposition 12.6]{cannasdasilva},) and consider the $G$-left-invariant $J$-Hermitian metric $g:=\omega\left(\sspace, \, J\ssspace\right)$.
Consider the Hodge-$*$-operator  $*_g\colon \wedge^{\bullet}\g^{\ast}_{\C}\otimes V_{\rho} \to \wedge^{2n-\bullet}\g^{\ast}_{\C}\otimes V_{\check\rho}$ on left-invariant forms where $\check\rho$ is the dual representation of $\rho$.
Like the duality between Bott-Chern and Aeppli cohomologies of compact symplectic manifolds, we have
 the isomorphism $ H^\bullet_{A}(\g;V_{\rho}) \stackrel{\simeq}{\to} H^{2n-\bullet}_{BC}(\g;V_{\check\rho})$ induced by  $*_g$.
If $\rho$ satisfies  either condition {\itshape (H)} or condition {\itshape (M)} in Theorem \ref{lietw}, the dual representation $\check\rho$ also does.
Hence by Theorem \ref{thm:twisted-sympl-bc-cohom-solvmfd}, we have the isomorphism
$$H^{\bullet}_{BC}\left( \g; V_{\check\rho} \right) \stackrel{\simeq}{\to} H^{\bullet}_{BC}\left( \solvmfd; E_{\rho_{\ast}}^{\ast} \right) \;.$$
By the duality between Bott-Chern and Aeppli cohomologies on $\solvmfd$,  we have $ H^\bullet_{A}(\solvmfd; E_{\rho_{\ast}}) \stackrel{\simeq}{\to} H^{2n-\bullet}_{BC}(\solvmfd; E_{\rho_{\ast}}^{\ast})$ and hence we have $H^\bullet_{A}(\g;V_{\rho}) \stackrel{\simeq}{\to} H^\bullet_{A}(\solvmfd; E_{\rho_{\ast}})$.
\end{proof}

\subsection{Twisted minimal model of solvmanifolds}
Consider a connected simply-connected solvable Lie group $G$ with a lattice $\Gamma$.
Consider the adjoint action $\ad \colon \g \ni X \mapsto \ad_X := \left[X,\sspace\right] \in \Der(\g)$, and, for any $X\in\g$, consider its unique Jordan decomposition $\ad_X = \left(\ad_X\right)_{\mathrm{s}}+\left(\ad_X\right)_{\mathrm{n}}$, where $\left(\ad_X\right)_{\mathrm{s}}\in\gl(\g)$ is semi-simple and $\left(\ad_X\right)_{\mathrm{n}}\in\gl(\g)$ is nilpotent, see, e.g., \cite[I\hspace{-.1em}I.1.10]{DER}.

Denote by $\n$ the nilradical of $\g$, and let $V$ be an $\R$-vector sub-space of $\g$ such that
\begin{inparaenum}[\itshape (i)]
\item $\g=V\oplus \n$ in the category of $\R$-vector spaces, and,
\item for any  $A,B\in V$, it holds that $\left(\ad_A\right)_{\mathrm{s}}(B)=0$,
\end{inparaenum}
see, e.g., \cite[Proposition I\hspace{-.1em}I\hspace{-.1em}I.1.1] {DER}.
Hence, define the map $\ad_{\mathrm{s}} \colon \g = V \oplus \n \ni \left(A,X\right) \mapsto \left(\ad_{\mathrm{s}}\right)_{A+X} := (\ad_{A})_{\mathrm{s}} \in \Der(\g)$.
Moreover, one has that
\begin{inparaenum}[\itshape (i)]\setcounter{enumi}{2}
\item $\left[\ad_{\mathrm{s}}(\g), \ad_{\mathrm{s}}(\g)\right]=\{0\}$, and
\item $\ad_{\mathrm{s}} \colon \g \to \gl(\g)$ is $\R$-linear,
\end{inparaenum}
see, e.g., \cite[Proposition {{I\hspace{-.1em}I\hspace{-.1em}I}.1.1}] {DER}.

The map $\ad_{\mathrm{s}}\colon \g\to \gl(\g)$ is actually a representation of $\g$ such that its image $\ad_{\mathrm{s}}(\g)$ is Abelian and consists of semi-simple elements. Hence denote by $\Ad_{\mathrm{s}}\colon G\to {\Aut}(\g)$ the unique representation which lifts ${\ad}_{\mathrm{s}}\colon \g \to \gl(\g)$, see, e.g., \cite[Theorem 3.27]{warner}, and by $\Ad_{\mathrm{s}}\colon G\to {\Aut}\left(\g_\C\right)$ its natural $\C$-linear extension.

 Let $T$ be the Zariski-closure of $\Ad_{s}(G)$ in $\Aut(\g_{\C})$.
Let 
\[{\mathcal C} \;:=\; \left\{ \beta\circ \Ad_{\mathrm{s}} \in \Hom\left(G;\C^{\ast}\right) \st \beta\in  \Char(T) \right\} \;. \]
For $\alpha\in\mathcal{C}$, consider $\alpha\colon G \to \GL(V_\alpha)\simeq\C^\ast$.
We consider the differential graded algebra
$$\bigoplus_{\alpha\in {\mathcal C}}\wedge^{\bullet} \g^\ast \otimes V_{\alpha} 
$$
with the $T$-action.
Denote by
$$ \left(\bigoplus_{\alpha\in {\mathcal C}}\wedge^{\bullet} \g^\ast \otimes V_{\alpha} \right)^{T} $$
the sub-differential graded algebra which consists of $T$-invariant elements.

Since ${\Ad}_{\mathrm{s}}(G)\subseteq\Aut(\g_\C)$ consists of simultaneously diagonalizable elements, let $\left\{ X_{1},\ldots ,X_{n} \right\}$ be a basis of $\g_{\C}$ with respect to which $\Ad_{\mathrm{s}} = \diag\left(\alpha_1, \ldots, \alpha_n\right) \colon G \to \Aut(\g_\C)$ for some characters $\alpha_1 \in \Hom(G;\C^*)$, \dots, $\alpha_n \in \Hom(G;\C^*)$, and let $\left\{ x_{1},\ldots,x_{n}\right\}$ be its dual basis of $\g_\C^\ast$.
Then we have
$$ \left(\bigoplus_{\alpha\in {\mathcal C}}\wedge^{\bullet} \g^\ast\otimes V_{\alpha} \right)^{T} \;=\; \wedge^{\bullet}\left\langle x_{1}\otimes v_{\alpha_{1}},\dots,x_{n}\otimes v_{\alpha_{n}}\right\rangle $$
where $\left\{ v_{\alpha_{j}} \right\}$ is a basis of $V_{\alpha_{j}}$ for each $j\in\{1,\ldots,n\}$.
In \cite[Section 5]{kasuya-jdg}, the second author showed that we have a differential graded algebra isomorphism
\[ \wedge^{\bullet}\left\langle x_{1}\otimes v_{\alpha_{1}},\dots,x_{n}\otimes v_{\alpha_{n}}\right\rangle \;\simeq\; \wedge^{\bullet}\mathfrak{u}^{\ast} \]
where $\mathfrak{u}$ is the Lie algebra of the unipotent hull of $G$, which is the unipotent algebraic group determined by $G$.
Since $\mathfrak{u}$ is nilpotent, $\wedge^{\bullet}\mathfrak{u}^{\ast}$ is a minimal differential graded algebra.

We also consider the Zariski-closure $S$ of $\Ad_{s}(\Gamma)$ in $\Aut(\g_{\C})$.
For $\beta\in \Char(S)$, we denote by $E_{\beta}$ the rank one flat bundle with the monodromy representation $\beta \circ \Ad_{s}\lfloor_\Gamma $.
Consider the differential algebra
$$ \bigoplus_{\beta\in\Char(S)} \wedge^{\bullet}\left( \solvmfd; E_{\beta} \right) \;. $$
Differential graded algebras of this kind were considered by Hain in \cite{Hai} for rational homotopy on non-nilpotent spaces.
In \cite{kasuya-jdg}, the second author constructed the minimal model of $\bigoplus_{\beta\in\Char(S)} \wedge^{\bullet}\left(\solvmfd;E_{\beta}\right)$.
By $S\subseteq T$, for $\beta\in \Char(S)$, we have characters $\alpha\in {\mathcal C}$ such that $E_{\alpha_{\ast}}=E_{\beta}$.
(Since  $S\not= T$ in general,  such $\alpha$ is not unique.)
Hence we have the  map
$$\bigoplus_{\alpha\in {\mathcal C}}\wedge^{\bullet} \g^\ast\otimes V_{\alpha} \to \bigoplus_{\beta\in\Char(S)} \wedge^{\bullet}\left( \solvmfd; E_{\beta} \right).
$$
Now we consider the map $\iota\colon\wedge^{\bullet}\mathfrak{u}^{\ast}\to \bigoplus_{\beta\in\Char(S)} \wedge^{\bullet}\left(\solvmfd;E_{\beta}\right)$ given by the composition
$$\wedge^{\bullet}\mathfrak{u}^{\ast} \;\simeq\; \left(\bigoplus_{\alpha\in {\mathcal C}}\wedge^{\bullet} \g^\ast\otimes V_{\alpha} \right)^{T} \;\subseteq\; \bigoplus_{\alpha\in {\mathcal C}}\wedge^{\bullet} \g^\ast\otimes V_{\alpha} \to \bigoplus_{\beta\in\Char(S)} \wedge^{\bullet}\left(\solvmfd;E_{\beta}\right) \;.
$$

The second author proved the following result in \cite{kasuya-jdg}.

\begin{thm}[{\cite[Theorem 1.1, Theorem 5.4]{kasuya-jdg}}]\label{min}
Let $G$ be a connected simply-connected solvable Lie group with a lattice $\Gamma$.
The above map $\iota\colon\wedge^{\bullet}\mathfrak{u}^{\ast}\to \bigoplus_{\beta\in\Char(S)} \wedge^{\bullet}\left( \solvmfd; E_{\beta} \right)$ induces a cohomology isomorphism.
Hence $\wedge^{\bullet}\mathfrak{u}^{\ast}$ is the minimal model of $\bigoplus_{\beta\in\Char(S)} \wedge^{\bullet}\left( \solvmfd; E_{\beta} \right)$.
\end{thm}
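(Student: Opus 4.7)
The plan is to split the map $\iota$ as
\[\wedge^\bullet\mathfrak{u}^\ast\;\overset{\sim}{\longleftarrow}\;\Bigl(\bigoplus_{\alpha\in\mathcal{C}}\wedge^\bullet\g^\ast\otimes V_\alpha\Bigr)^T \;\hookrightarrow\; \bigoplus_{\alpha\in\mathcal{C}}\wedge^\bullet\g^\ast\otimes V_\alpha\;\overset{\Phi}{\longrightarrow}\; \bigoplus_{\beta\in\Char(S)}\wedge^\bullet(\solvmfd;E_\beta),\]
where the first identification is the explicit basis isomorphism recalled in the excerpt, and $\Phi$ sends a left-invariant $V_\alpha$-valued form to its image in $\wedge^\bullet(\solvmfd;E_{\alpha_\ast|_\Gamma})$, noting that $S\subseteq T$ lets a character of $T$ restrict to one of $S$. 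The strategy is to prove that $\Phi$ is a quasi-isomorphism summand by summand using Mostow's theorem, and then to pass to $T$-invariants using the reductivity of the algebraic torus $T$.

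For the first point, fix $\alpha\in\mathcal{C}$ and write $\alpha=\beta\circ\Ad_{\mathrm{s}}$ with $\beta\in\Char(T)$. By construction $\Ad_{\mathrm{s}}$ factors through $T$, the Zariski closure of $\Ad_{\mathrm{s}}(G)$, and a Borel-density-type argument built into the definition of $T$ implies that the Zariski closures of $(\alpha\oplus\Ad)(G)$ and $(\alpha\oplus\Ad)(\Gamma)$ coincide. Hypothesis (M) of Theorem \ref{lietw} therefore applies and yields $H^\bullet(\g;V_\alpha)\simeq H^\bullet(\solvmfd;E_{\alpha_\ast|_\Gamma})$. This is the key Lie-algebraic input.

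For the second point, since $T$ is an algebraic torus, hence reductive, the functor of $T$-invariants is exact on algebraic $T$-representations, and in particular commutes with the cohomology of a $T$-equivariant dga:
\[H^\bullet\!\Bigl(\bigl(\textstyle\bigoplus_{\alpha\in\mathcal{C}}\wedge^\bullet\g^\ast\otimes V_\alpha\bigr)^T\Bigr)\;\simeq\;\Bigl(H^\bullet\bigl(\textstyle\bigoplus_{\alpha\in\mathcal{C}}\wedge^\bullet\g^\ast\otimes V_\alpha\bigr)\Bigr)^T.\]
Combined with step one, this identifies $H^\bullet(\wedge^\bullet\mathfrak{u}^\ast)$ with the $T$-invariant part of $\bigoplus_{\alpha\in\mathcal{C}} H^\bullet(\solvmfd;E_{\alpha_\ast|_\Gamma})$; each $T$-orbit of $\alpha$'s restricting to a common $\beta\in\Char(S)$ contributes, after taking invariants, exactly one copy of $H^\bullet(\solvmfd;E_\beta)$, producing the required isomorphism with $\bigoplus_{\beta\in\Char(S)}H^\bullet(\solvmfd;E_\beta)$. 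Minimality of $\wedge^\bullet\mathfrak{u}^\ast$ as a Sullivan algebra is then immediate from the nilpotency of $\mathfrak{u}$: choosing a basis adapted to the lower central series shows that the Chevalley--Eilenberg differential lands in the decomposables.

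The main obstacle I expect is the bookkeeping of the non-canonical lift $\Char(S)\dashrightarrow\Char(T)$ and the verification of Mostow's condition (M). Two different extensions $\alpha,\alpha'\in\mathcal{C}$ of the same $\beta\in\Char(S)$ differ by a character of $T/S$ trivial on $S$, so the induced flat bundles $E_{\alpha_\ast|_\Gamma}$ and $E_{\alpha'_\ast|_\Gamma}$ coincide, yet the corresponding summands in $\bigoplus_\alpha\wedge^\bullet\g^\ast\otimes V_\alpha$ carry distinct $T$-weights. Ensuring that this ambiguity is precisely cancelled by the $T$-invariance, so that exactly one copy of each $H^\bullet(\solvmfd;E_\beta)$ survives, and that the Zariski-closure hypothesis in Theorem \ref{lietw} genuinely holds for every $\alpha\oplus\Ad$, are the delicate technical points in the proof.
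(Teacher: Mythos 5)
The paper does not actually prove Theorem \ref{min}: it is imported verbatim from \cite[Theorem 1.1, Theorem 5.4]{kasuya-jdg}, so there is no internal proof to compare yours against, and your proposal has to stand on its own. It does not, because of a genuine gap at its central step. You claim that for every $\alpha=\beta\circ\Ad_{\mathrm{s}}\in\mathcal{C}$ the Mostow condition \emph{(M)} of Theorem \ref{lietw} holds because ``a Borel-density-type argument built into the definition of $T$'' forces the Zariski closures of $(\alpha\oplus\Ad)(G)$ and $(\alpha\oplus\Ad)(\Gamma)$ to coincide. There is no Borel density theorem for solvable groups, and the asserted coincidence fails in general: if it held, the closures $T=\overline{\Ad_{\mathrm{s}}(G)}$ and $S=\overline{\Ad_{\mathrm{s}}(\Gamma)}$ would be equal, whereas the paper explicitly records that $S\neq T$ in general --- this is exactly why the lift $\Char(S)\dashrightarrow\mathcal{C}$ is non-unique, and it is visible in the complex parallelizable Nakamura manifold, where the $G$-invariant complex does not compute $H^\bullet_{dR}$. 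The failure of \emph{(M)} is precisely the difficulty the theorem is designed to overcome, so reducing summand by summand to Theorem \ref{lietw} assumes away the whole problem; the proof in \cite{kasuya-jdg} does not proceed this way (it uses, among other things, the symmetrization maps $\mu_\alpha$ and the orthogonality $\mu_{\alpha'}\circ\iota_{\alpha}=0$ for distinct $\alpha,\alpha'$ with $E_{\alpha_\ast}=E_{\alpha'_\ast}$ that the present paper quotes in the proof of Theorem \ref{sym-min}).

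Two further points would need attention even if that step were repaired. The target $\bigoplus_{\beta\in\Char(S)}\wedge^{\bullet}\left(\solvmfd;E_\beta\right)$ ranges over \emph{all} characters of $S$, while your source only sees those $\beta$ arising as restrictions of elements of $\mathcal{C}$; you must separately prove $H^{\bullet}\left(\solvmfd;E_\beta\right)=\{0\}$ for every $\beta$ outside that image, which is a nontrivial vanishing statement you never address. And the ``exactly one copy per $\beta$'' count after taking $T$-invariants requires an argument, not just the observation that distinct lifts carry distinct $T$-weights: one must identify which weight components of the (already computed) cohomology survive and match them bijectively with the invariant cochains. By contrast, your identification of $\left(\bigoplus_{\alpha\in\mathcal{C}}\wedge^{\bullet}\g^{\ast}\otimes V_\alpha\right)^{T}$ with $\wedge^{\bullet}\mathfrak{u}^{\ast}$ and the deduction of minimality from the nilpotency of $\mathfrak{u}$ are correct and agree with what the paper recalls.
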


\begin{note}\label{untwi-co}
We consider the maps $\iota\colon\wedge^{\bullet}\mathfrak{u}^{\ast}\to \bigoplus_{\beta\in\Char(S)} \wedge^{\bullet}\left(\solvmfd;E_{\beta}\right)$ given by the composition
$$\wedge^{\bullet}\mathfrak{u}^{\ast} \;\simeq\; \left(\bigoplus_{\alpha\in {\mathcal C}}\wedge^{\bullet} \g^\ast\otimes V_{\alpha} \right)^{T} \;\subseteq\; \bigoplus_{\alpha\in {\mathcal C}}\wedge^{\bullet} \g^\ast\otimes V_{\alpha} \to \bigoplus_{\beta\in\Char(S)} \wedge^{\bullet}\left(\solvmfd;E_{\beta}\right) \;.
$$
as above.
Let $A_{\Gamma}^{\bullet}=\iota^{-1}\left( \wedge^{\bullet}\solvmfd \otimes \C\right)$ where we consider $\wedge^{\bullet}\solvmfd \otimes \C=\wedge^{\bullet}\left( \solvmfd; E_{1_{S}} \right)$ for the trivial character $1_{S}$.
Then the  map $\iota\colon A_{\Gamma}^{\bullet}\subset \wedge^{\bullet}\solvmfd \otimes \C$ induces a cohomology isomorphism.
Set 
\[{\mathcal C}_{\Gamma} \;:=\; \left\{ \beta\circ \Ad_{\mathrm{s}} \in \Hom\left(G;\C^{\ast}\right) \st \beta\in  \Char(T),\; \left(\beta\circ \Ad_{\mathrm{s}}\right)\lfloor_\Gamma=1 \right\} \;. \]
As \cite[Corollary 7.6]{kasuya-jdg}, we have
\begin{equation}\label{eq:def-a1}A_{\Gamma}^{\bullet}=\left(\bigoplus_{\alpha\in {\mathcal C}_{\Gamma}}\wedge^{\bullet} \g^{\ast}_{\C}\otimes V_{\alpha} \right)^{T}.
\end{equation}
By using the basis $\{x_{1},\dots, x_{n}\}$ of $\g_{\C}^{\ast}$  such that
 $$\left(\bigoplus_{\alpha\in {\mathcal C}_{\Gamma}}\wedge^{\bullet} \g^{\ast}_{\C}\otimes V_{\alpha} \right)^{T}\cong \wedge^{\bullet}\left\langle x_{1}\otimes v_{\alpha_{1}},\dots,x_{n}\otimes v_{\alpha_{n}}\right\rangle$$
as above, since we have $\wedge^{\bullet} \g^{\ast}_{\C}\otimes V_{\alpha}=\alpha\cdot \wedge^{\bullet} \g^{\ast}_{\C}$ in $\wedge^{\bullet}\solvmfd \otimes \C$ for $\alpha\in {\mathcal A}_{\Gamma}$, 
  the differential graded algebra $ A^{\bullet}_\Gamma$  can be written as
\begin{equation}\label{eq:def-a}
A^{p}_\Gamma
\;=\;
\C\left\langle \alpha_{i_{1}\cdots i_{p}}\, x_{i_{1}}\wedge \dots \wedge x_{i_{p}} \;\middle\vert\; 1\le i_{1}<i_{2}<\dots <i_{p}\le n \; \text{ such that } \alpha_{i_{1}\cdots i_{p}}\lfloor_\Gamma = 1\right\rangle \;,
\end{equation}
where we have shortened $\alpha_{i_{1}\cdots i_{p}} := \alpha_{i_{1}}\cdot \cdots \cdot \alpha_{i_{p}} \in \Hom\left(G;\C^*\right)$.
\end{note}

\medskip

We suppose now that $\solvmfd$ admits a $G$-left-invariant symplectic structure $\omega\in \wedge^{2}\g$.
We assume that $\omega$ is $T$-invariant (equivalently $\omega \in A^{2}_\Gamma$).

Then the operators $L$ and $\Lambda$ on $\bigoplus_{\alpha\in {\mathcal C}}\wedge^{\bullet} \g\otimes V_{\alpha} $ commute with the $T$-action.
Hence $L$ and $\Lambda$ and the differential $D^{\Lambda}=D\Lambda-\Lambda D$ are defined on $\wedge^{\bullet}\mathfrak{u}^{\ast}\simeq \left(\bigoplus_{\alpha\in {\mathcal C}}\wedge^{\bullet} \g^\ast \otimes V_{\alpha} \right)^{T}$, where $D$ is the differential on the differential graded algebra $\wedge^{\bullet}\mathfrak{u}^{\ast}$.
Now we can regard $\omega$ as a symplectic form on the Lie algebra $\mathfrak{u}$.
The symplectic-$\star$-operator $\star_{\omega}$ is defined on  $\wedge^{\bullet}\mathfrak{u}^{\ast}$.
We consider the cohomologies $H^{\bullet}_{D}(\bigoplus_{\beta\in\Char(S)} \wedge^{\bullet}\left( \solvmfd; E_{\beta} \right))$, $H^{\bullet}_{D^{\Lambda}}(\bigoplus_{\beta\in\Char(S)} \wedge^{\bullet}\left( \solvmfd; E_{\beta} \right))$, and $H^{\bullet}_{BC}(\bigoplus_{\beta\in\Char(S)} \wedge^{\bullet}\left( \solvmfd; E_{\beta} \right))$ of $\bigoplus_{\beta\in\Char(S)} \wedge^{\bullet}\left( \solvmfd; E_{\beta} \right)$ and the cohomologies $H^{\bullet}_{D}(\mathfrak{u})$, $H^{\bullet}_{D^{\Lambda}}(\mathfrak{u})$,
and $H^{\bullet}_{BC}(\mathfrak{u})$ of $\wedge^{\bullet}\mathfrak{u}^{\ast}$.

\begin{thm}\label{sym-min}
Let $G$ be a connected simply-connected solvable Lie group with a lattice $\Gamma$ and endowed with a $G$-left-invariant $T$-invariant symplectic structure $\omega$.

The above map $\iota\colon\wedge^{\bullet}\mathfrak{u}^{\ast}\to \bigoplus_{\beta\in\Char(S)} \wedge^{\bullet}\left( \solvmfd; E_{\beta} \right)$ induces the Bott-Chern cohomology isomorphism
$$ H^{\bullet}_{BC}(\mathfrak{u}) \stackrel{\simeq}{\to} H^{\bullet}_{BC}\left(\bigoplus_{\beta\in\Char(S)} \wedge^{\bullet}\left( \solvmfd; E_{\beta} \right)\right) \;. $$
\end{thm}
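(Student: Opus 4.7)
The plan is to apply Corollary \ref{ives} to the inclusion
\[
\iota\colon B^{\bullet} \;:=\; \wedge^{\bullet}\mathfrak{u}^{\ast} \;\hookrightarrow\; A^{\bullet} \;:=\; \bigoplus_{\beta\in\Char(S)}\wedge^{\bullet}\left(\solvmfd;E_{\beta}\right),
\]
viewing both sides as bounded bi-differential $\Z$-graded complexes with operators $D$ of degree $+1$ and $D^{\Lambda}$ of degree $-1$. Since $\omega\in\wedge^{2}\g^{\ast}$ is $T$-invariant, it defines a symplectic form on $\mathfrak{u}$ through the identification $\wedge^{\bullet}\mathfrak{u}^{\ast}\simeq\left(\bigoplus_{\alpha\in\mathcal{C}}\wedge^{\bullet}\g^{\ast}\otimes V_{\alpha}\right)^{T}$, so that $L$, $\Lambda$, $\star_{\omega}$, and $D^{\Lambda}$ are well-defined on both $B^{\bullet}$ and $A^{\bullet}$, and $\iota$ intertwines them because $\star_{\omega}$ operates only on the form factor and leaves the local-system factor untouched.

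First I would verify hypothesis (ii) of Corollary \ref{ives}. The $D$-cohomology isomorphism $H^{\bullet}_{D}(\mathfrak{u})\stackrel{\simeq}{\to}H^{\bullet}_{D}(A^{\bullet})$ is exactly the content of Theorem \ref{min}. The corresponding $D^{\Lambda}$-cohomology isomorphism then follows from the Brylinski identity $D^{\Lambda}\lfloor_{\wedge^{k}}=(-1)^{k+1}\star_{\omega} D\star_{\omega}$ together with $\star_{\omega}^{2}=\id$: the commutative square with horizontal arrows $\iota$ and vertical arrows $\star_{\omega}$ has three of its four corners already isomorphisms, forcing the fourth. Hypothesis (i) is then automatic, since $B^{\bullet}$ is itself finite-dimensional and the cohomologies of $A^{\bullet}$ are finite-dimensional by what has just been proved.

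The substance of the argument lies in checking hypothesis (iii), that is, constructing a morphism $\mu\colon A^{\bullet}\to B^{\bullet}$ of bi-differential $\Z$-graded complexes with $\mu\circ\iota=\id_{B^{\bullet}}$. I would build $\mu$ as a twisted version of the F.~A. Belgun symmetrization. For each $\beta\in\Char(S)$, as observed in the discussion preceding Theorem \ref{min}, we may choose a lift $\alpha\in\mathcal{C}$ with $E_{\alpha_{\ast}}=E_{\beta}$; the character $\alpha\colon G\to\C^{\ast}$ trivializes the pullback of $E_{\beta}$ to $G$, so that an $E_{\beta}$-valued form on $\solvmfd$ pulls back to an $\alpha$-equivariant $V_{\alpha}$-valued form on $G$. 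After dividing by $\alpha$ one obtains a genuine differential form on $G$, which is then integrated against a $G$-bi-invariant probability measure to produce a left-invariant form in $\wedge^{\bullet}\g^{\ast}\otimes V_{\alpha}$; the final projection onto the $T$-invariant summand identifies the output with an element of $\wedge^{\bullet}\mathfrak{u}^{\ast}$. Since $\omega$ (and hence $L$, $\Lambda$, $\star_{\omega}$, $D^{\Lambda}$) is built from $G$-left-invariant and $T$-invariant data, each step of the construction commutes with both $D$ and $D^{\Lambda}$; and $\mu\circ\iota=\id$ holds by construction.

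Corollary \ref{ives} then yields the desired isomorphism $H^{\bullet}_{BC}(\mathfrak{u})\stackrel{\simeq}{\to}H^{\bullet}_{BC}(A^{\bullet})$. The main obstacle I foresee lies in the construction of $\mu$: since the restriction map $\mathcal{C}\to\Char(S)$ need not be injective, one must verify that the composed symmetrization-and-projection is independent of the choice of lift $\alpha$, and that the $T$-invariant projection is compatible with $D^{\Lambda}$ (and not only with $D$) on the whole summand $\bigoplus_{\alpha}\wedge^{\bullet}\g^{\ast}\otimes V_{\alpha}$, rather than only on its $T$-invariants.
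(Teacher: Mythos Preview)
Your overall strategy matches the paper's exactly: apply Corollary~\ref{ives}, get hypothesis~(ii) from Theorem~\ref{min} together with the $\star_{\omega}$ conjugation for the $D^{\Lambda}$-cohomology, and produce a retraction $\mu$ built from Belgun-type symmetrizations followed by projection onto the $T$-invariant summand. Your verification of~(i) and~(ii) is fine and agrees with the paper.

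The gap is in the construction of $\mu$, and it is precisely the obstacle you flag at the end but do not resolve. Choosing \emph{one} lift $\alpha\in\mathcal{C}$ for each $\beta\in\Char(S)$ does \emph{not} give $\mu\circ\iota=\id$. The reason is that $\iota$ sends the $T$-invariant $(x_{i_{1}}\wedge\cdots\wedge x_{i_{p}})\otimes v_{\alpha_{i_{1}\cdots i_{p}}}$ into the summand $\wedge^{\bullet}(\solvmfd;E_{\beta})$ with $\beta=\alpha_{i_{1}\cdots i_{p}}\lfloor_{\Gamma}$, and several distinct characters $\alpha,\alpha'\in\mathcal{C}$ can restrict to the same $\beta$. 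The key fact (see the proof of \cite[Proposition~6.1]{kasuya-jdg}) is that for distinct lifts $\alpha\neq\alpha'$ of the same $\beta$ one has $\mu_{\alpha'}\circ\iota_{\alpha}=0$. So if you symmetrize the $\beta$-summand only via your chosen lift $\alpha$, every contribution coming from an $\alpha'$-piece of $\wedge^{\bullet}\mathfrak{u}^{\ast}$ is annihilated, and $\mu\circ\iota$ fails to be the identity. The issue is not ``independence of the choice of lift'' --- it is that no single choice works.

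The paper's fix is to \emph{sum over all lifts}: set $\Phi_{\beta}:=\sum_{\alpha:\,E_{\alpha_{\ast}}=E_{\beta}}\mu_{\alpha}$, so that $\Phi_{\beta}\circ\iota_{\beta}=\id$ follows from $\mu_{\alpha}\circ\iota_{\alpha}=\id$ together with the cross-vanishing $\mu_{\alpha'}\circ\iota_{\alpha}=0$. One also restricts first to a finite index set $\mathcal{A}\subset\mathcal{C}$ of characters actually occurring in $\wedge^{\bullet}\mathfrak{u}^{\ast}$ (via a projection $p$ onto the corresponding $\beta$'s), so that the sum is finite; finally one applies the $T$-equivariant projection $q$, which commutes with $D^{\Lambda}$ because $\omega$ is $T$-invariant and the $T$-action is diagonalizable. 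With $\Psi:=q\circ\Phi\circ p$ one then has $\Psi\circ\iota=\id$, and Corollary~\ref{ives} applies.
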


\begin{proof}
Set 
$$ {\mathcal A} \;:=\; \left\{ \alpha_{i_{1}\dots i_{p}} \in \Hom(G;\C^{\ast}) \st 1\le i_{1}\le\dots\le i_{p}\le n \right\} $$
and
$$ {\mathcal A}^{\prime} \;:=\; \left\{ \beta\in\Char(S) \st \text{ there exists } \alpha\in {\mathcal A} \text{ such that } E_{\beta}=E_{\alpha_{\ast}}  \right\} \;. $$
We consider the projection
$$ p \colon \bigoplus_{\beta\in\Char(S)} \wedge^{\bullet}\left( \solvmfd; E_{\beta} \right)\to \bigoplus_{\beta\in{\mathcal A}^{\prime}} \wedge^{\bullet}\left( \solvmfd; E_{\beta} \right) \;. $$

Let $\beta\in{\mathcal A}^{\prime}$.
Take $\alpha\in  {\mathcal A}$ such that $E_{\beta}=E_{\alpha_{\ast}} $.
Then we have the inclusion
$$ \wedge^{\bullet}\g_{\C}^\ast \otimes V_{\alpha} \;\subseteq\; \wedge^{\bullet}\left( \solvmfd; E_{\beta} \right) \;, $$
and we consider the  F.~A. Belgun symmetrization map, \cite[Theorem 7]{belgun},
$$ \mu_{\alpha}\colon \wedge^{\bullet}\left( \solvmfd; E_{\beta} \right)\to \wedge^{\bullet}\g_{\C}^\ast\otimes V_{\alpha} \;. $$
We define the map 
$$ \Phi_{\beta} \;:=\; \sum_{\alpha\in {\mathcal A} \text{ s.t. }E_{\beta}=E_{\alpha_{\ast}}}\mu_{\alpha}: \wedge^{\bullet}\left( \solvmfd; E_{\beta} \right)\to \bigoplus_{\alpha\in {\mathcal A} \text{ s.t. } E_{\beta}=E_{\alpha_{\ast}}}\wedge^{\bullet}\g_{\C}^\ast\otimes V_{\alpha} \;.
$$

Then for distinct characters $\alpha$ and $\alpha^{\prime}$ with $E_{\alpha_{\ast}} = E_{\beta} = E_{\alpha^{\prime}_{\ast}}$, for the inclusion $\iota_{\alpha}\colon \wedge^{\bullet}\g^\ast_{\C}\otimes V_{\alpha}\to \wedge^{\bullet}\left( \solvmfd; E_{\beta} \right)$, we have  $\mu_{\alpha^{\prime}}\circ \iota_{\alpha} = 0$ (see the proof of \cite[Proposition 6.1]{kasuya-jdg}).
Hence, for  $\iota_{\beta}\colon \bigoplus_{\alpha\in {\mathcal A} \text{ s.t. } E_{\beta}=E_{\alpha_{\ast}}}\wedge^{\bullet}\g^\ast_{\C}\otimes V_{\alpha}\to \wedge^{\bullet}\left( \solvmfd; E_{\beta} \right)$, we have
$\Phi_{\beta}\circ i_{\beta}=\id$.
We define the map
$$\Phi \;:=\; \sum_{\beta\in {\mathcal A}^{\prime}}\Phi_{\beta} \colon \bigoplus_{\beta\in{\mathcal A}^{\prime}} \wedge^{\bullet}\left( \solvmfd; E_{\beta} \right)\to \bigoplus_{\alpha\in {\mathcal A}}\wedge^{\bullet}\g_{\C}^\ast\otimes V_{\alpha} \;.
$$
Then for $\iota \colon \bigoplus_{\alpha\in {\mathcal A}}\wedge^{\bullet}\g_{\C}^\ast\otimes V_{\alpha}\to \bigoplus_{\beta\in{\mathcal A}^{\prime}} \wedge^{\bullet}\left( \solvmfd; E_{\beta} \right)$, we have $\Phi\circ \iota=\id$.

Since $\omega$ is $G$-left-invariant, the map $\Phi$ commutes with the operators $L$ and $\Lambda$ and the operator $D^{\Lambda}$.
Since the $T$-action on $\bigoplus_{\alpha\in {\mathcal A}}\wedge^{\bullet} \g^\ast_\C\otimes V_{\alpha}$ is diagonalizable,
we can take  the direct sum
$$ \bigoplus_{\alpha\in {\mathcal A}}\wedge^{\bullet} \g^\ast_\C\otimes V_{\alpha} \;=\; \left(\bigoplus_{\alpha\in {\mathcal A}}\wedge^{\bullet} \g^\ast_\C\otimes V_{\alpha} \right)^{T}\oplus  D^{\bullet}
$$
of cochain complexes,
where $\left(\bigoplus_{\alpha\in {\mathcal A}}\wedge^{\bullet} \g^\ast_\C\otimes V_{\alpha} \right)^{T}$ is 
the sub-complex that consists of the elements of $\bigoplus_{\alpha\in {\mathcal A}}\wedge^{\bullet} \g\otimes V_{\alpha} $ fixed by the action of $T$ and $ D^{\bullet}$ is its complement for the action.
Hence the projection
$$ q \colon \bigoplus_{\alpha\in {\mathcal A}}\wedge^{\bullet} \g^\ast_\C\otimes V_{\alpha}\to \left(\bigoplus_{\alpha\in {\mathcal A}}\wedge^{\bullet} \g^\ast_\C\otimes V_{\alpha} \right)^{T}
$$
is a cochain complex map.
Since $\omega$ is $T$-invariant, the map $q \colon \bigoplus_{\alpha\in {\mathcal A}}\wedge^{\bullet} \g^\ast_\C\otimes V_{\alpha}\to \left(\bigoplus_{\alpha\in {\mathcal A}}\wedge^{\bullet} \g^\ast_\C\otimes V_{\alpha} \right)^{T}
$ commutes with the operators $L$ and $\Lambda$ and  the operator $D^{\Lambda}$.

Since we have
$$ \left(\bigoplus_{\alpha\in {\mathcal C}}\wedge^{\bullet} \g^{\ast}_{\C}\otimes V_{\alpha} \right)^{T} \;=\; \wedge^{\bullet}\langle x_{1}\otimes v_{\alpha_{1}},\dots,x_{n}\otimes v_{\alpha_{n}}\rangle \;,
$$
we have
$$
\left(\bigoplus_{\alpha\in {\mathcal C}}\wedge^{\bullet} \g^{\ast}_{\C}\otimes V_{\alpha} \right)^{T} \;=\; \left(\bigoplus_{\alpha\in {\mathcal A}}\wedge^{\bullet} \g^{\ast}_{\C}\otimes V_{\alpha} \right)^{T}
$$
and so we have
$$\wedge^{\bullet} \mathfrak{u}^{\ast} \;\simeq\; \left(\bigoplus_{\alpha\in {\mathcal A}}\wedge^{\bullet} \g^{\ast}_{\C}\otimes V_{\alpha} \right)^{T} \;.
$$

We consider the maps $\iota\colon\wedge^{\bullet} \mathfrak{u}^{\ast}\to \bigoplus_{\beta\in\Char(S)} \wedge^{\bullet}\left( \solvmfd; E_{\beta} \right)$ given by the composition
$$
\wedge^{\bullet} \mathfrak{u}^{\ast} \simeq \left(\bigoplus_{\alpha\in {\mathcal A}}\wedge^{\bullet} \g^{\ast}_{\C}\otimes V_{\alpha} \right)^{T} \to \bigoplus_{\alpha\in {\mathcal A}}\wedge^{\bullet} \g^{\ast}_{\C}\otimes V_{\alpha} \to \bigoplus_{\beta\in{\mathcal A}^{\prime}} \wedge^{\bullet}\left( \solvmfd; E_{\beta} \right) \to \bigoplus_{\beta\in\Char(S)} \wedge^{\bullet}\left( \solvmfd; E_{\beta} \right)
$$
and $\Psi\colon \bigoplus_{\beta\in\Char(S)} \wedge^{\bullet}\left( \solvmfd; E_{\beta} \right)\to \wedge^{\bullet} \mathfrak{u}^{\ast}$
given by the composition
$$ \bigoplus_{\beta\in\Char(S)} \wedge^{\bullet}\left( \solvmfd; E_{\beta} \right) \stackrel{p}{\to} \bigoplus_{\beta\in{\mathcal A}^{\prime}} \wedge^{\bullet}\left( \solvmfd; E_{\beta} \right) \stackrel{\Phi}{\to} \bigoplus_{\alpha\in {\mathcal A}}\wedge^{\bullet} \g^\ast_\C\otimes V_{\alpha} \stackrel{q}{\to} \left(\bigoplus_{\alpha\in {\mathcal A}}\wedge^{\bullet} \g\otimes V_{\alpha} \right)^{T} \simeq \wedge^{\bullet} \mathfrak{u}^{\ast} \;.
$$
Then by the above arguments, $\iota$ and $\Phi$ commute with the differentials $D$ and $D^{\Lambda}$ and satisfy $\Psi\circ \iota=\id$.

By Theorem \ref{min} the injection $\iota \colon \wedge^{\bullet} \mathfrak{u}^{\ast}\to \bigoplus_{\beta\in\Char(S)} \wedge^{\bullet}\left( \solvmfd; E_{\beta} \right)$ induces the isomorphism
$$H^{\bullet}_{D}(\mathfrak{u}) \stackrel{\simeq}{\to} H_{D}^{\bullet}\left( \bigoplus_{\beta\in\Char(S)} \wedge^{\bullet}\left( \solvmfd; E_{\beta} \right)\right) \;.
$$
By this isomorphism and the symplectic-$\star$-operator $\star_{\omega}$, the injection $\iota\colon \wedge^{\bullet} \mathfrak{u}^{\ast}\to \bigoplus_{\beta\in\Char(S)} \wedge^{\bullet}\left( \solvmfd; E_{\beta} \right)$ induces the isomorphism
$$H^{\bullet}_{D^{\Lambda}}(\mathfrak{u}) \stackrel{\simeq}{\to} H_{D^{\Lambda}}^{\bullet}\left( \bigoplus_{\beta\in\Char(S)} \wedge^{\bullet}\left( \solvmfd; E_{\beta} \right)\right) \;.
$$
Hence by using the map $\Psi \colon \bigoplus_{\beta\in\Char(S)} \wedge^{\bullet}\left( \solvmfd; E_{\beta} \right)\to \wedge^{\bullet} \mathfrak{u}^{\ast}$ as above,
the theorem follows from Corollary \ref{ives}.
\end{proof}

\begin{cor}\label{aep-min}
Let $G$ be a connected simply-connected solvable Lie group with a lattice $\Gamma$ and endowed with a $G$-left-invariant $T$-invariant symplectic structure $\omega$.

The above map $\iota\colon\wedge^{\bullet}\mathfrak{u}^{\ast}\to \bigoplus_{\beta\in\Char(S)} \wedge^{\bullet}\left( \solvmfd; E_{\beta} \right)$ induces the Aeppli cohomology isomorphism
$$ H^{\bullet}_{A}(\mathfrak{u}) \stackrel{\simeq}{\to} H^{\bullet}_{A}\left(\bigoplus_{\beta\in\Char(S)} \wedge^{\bullet}\left( \solvmfd; E_{\beta} \right)\right) \;. $$

\end{cor}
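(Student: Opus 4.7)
The plan is to reduce the Aeppli statement to the Bott-Chern statement already proved in Theorem \ref{sym-min}, via the Hodge-$*$ duality between Bott-Chern and Aeppli cohomologies. This is exactly the strategy used in the proof of the Aeppli corollary following Theorem \ref{thm:twisted-sympl-bc-cohom-solvmfd}, transposed to the minimal-model setting.

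First, I would recycle the maps $\iota \colon \wedge^{\bullet}\mathfrak{u}^{\ast}\to \bigoplus_{\beta\in\Char(S)} \wedge^{\bullet}(\solvmfd; E_{\beta})$ and $\Psi$ in the opposite direction that were built in the proof of Theorem \ref{sym-min}. There they were shown to be cochain maps with respect to both $D$ and $D^{\Lambda}$, and to satisfy $\Psi\circ \iota = \id$. Consequently they commute with $DD^{\Lambda}$ and descend to Aeppli cohomology, and the identity $\Psi^{\ast}\circ \iota^{\ast} = \id$ immediately yields the injectivity of
$$ \iota^{\ast} \colon H^{\bullet}_{A}(\mathfrak{u}) \longrightarrow H^{\bullet}_{A}\left(\bigoplus_{\beta\in\Char(S)} \wedge^{\bullet}\left(\solvmfd; E_{\beta}\right)\right). $$

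Next I would compare dimensions. Fix a $G$-left-invariant $\omega$-compatible almost-complex structure $J$ on $\solvmfd$ (as in \cite[Proposition 12.6]{cannasdasilva}) and the associated $G$-left-invariant almost-K\"ahler metric $g$; the Hodge-$*$-operator $*_{g}$ then induces, for each character, an isomorphism $H^{\bullet}_{A}(\solvmfd; E_{\beta}) \simeq H^{2n-\bullet}_{BC}(\solvmfd; E_{\beta^{-1}})$. Since $\Char(S)$ is an abelian group, it is closed under inversion, so summing over $\beta$ gives
$$ H^{\bullet}_{A}\left(\bigoplus_{\beta\in\Char(S)} \wedge^{\bullet}(\solvmfd; E_{\beta})\right) \;\simeq\; H^{2n-\bullet}_{BC}\left(\bigoplus_{\beta\in\Char(S)} \wedge^{\bullet}(\solvmfd; E_{\beta})\right). $$
By Theorem \ref{sym-min} the right-hand side is isomorphic to $H^{2n-\bullet}_{BC}(\mathfrak{u})$; since $\omega$ restricts to a linear symplectic form on $\mathfrak{u}$, the analogous Hodge-$*$ duality on the finite-dimensional complex $\wedge^{\bullet}\mathfrak{u}^{\ast}$ identifies this last group with $H^{\bullet}_{A}(\mathfrak{u})$. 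Chaining these isomorphisms shows that source and target of $\iota^{\ast}$ are finite-dimensional of equal dimension, so the injective map $\iota^{\ast}$ is an isomorphism.

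The main obstacle I foresee is checking the compatibility of $*_{g}$ with the character decomposition, namely that $*_{g}$ carries $\wedge^{\bullet}(\solvmfd; E_{\beta})$ into $\wedge^{2n-\bullet}(\solvmfd; E_{\beta^{-1}})$. This reduces to the standard identification $E_{\beta}^{\ast}\simeq E_{\beta^{-1}}$ for rank-one flat bundles determined by a character, so it should be routine; likewise, finite-dimensionality of the full direct sum cohomology is inherited from the finite dimension of $\wedge^{\bullet}\mathfrak{u}^{\ast}$ via Theorem \ref{sym-min}, so only finitely many $\beta\in\Char(S)$ contribute nontrivially and the dimension-counting argument is legitimate.
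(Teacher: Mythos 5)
Your proposal is correct and follows essentially the same route as the paper's own proof: injectivity from the retraction $\Psi\circ\iota=\id$ built in the proof of Theorem \ref{sym-min}, and then an abstract isomorphism obtained by chaining the Hodge-$*$ duality $H^{\bullet}_{A}\simeq H^{2n-\bullet}_{BC}$ on both $\wedge^{\bullet}\mathfrak{u}^{\ast}$ and $\bigoplus_{\beta\in\Char(S)}\wedge^{\bullet}\left(\solvmfd;E_{\beta}\right)$ (using $E_{\beta}^{\ast}\simeq E_{\beta^{-1}}$ and closure of $\Char(S)$ under inversion) with the Bott-Chern isomorphism of Theorem \ref{sym-min}.
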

\begin{proof}
By the map $\Psi \colon \bigoplus_{\beta\in\Char(S)} \wedge^{\bullet}\left( \solvmfd; E_{\beta} \right)\to \wedge^{\bullet} \mathfrak{u}^{\ast}$ constructed in the proof of Theorem \ref{sym-min}, the induced map $$ H^{\bullet}_{A}(\mathfrak{u}) \to H^{\bullet}_{A}\left(\bigoplus_{\beta\in\Char(S)} \wedge^{\bullet}\left( \solvmfd; E_{\beta} \right)\right) \; $$
is injective.
Hence it is sufficient to show that there exists an isomorphism  $$ H^{\bullet}_{A}(\mathfrak{u}) \cong H^{\bullet}_{A}\left(\bigoplus_{\beta\in\Char(S)} \wedge^{\bullet}\left( \solvmfd; E_{\beta} \right)\right) \; .$$

Since $\mathfrak{u}$ is a nilpotent Lie algebra and $\omega$ can be regard as a symplectic form on $\mathfrak{u}$, like the duality between Bott-Chern and Aeppli cohomologies of compact symplectic manifolds, we have
 the isomorphism $ H^\bullet_{A}(\mathfrak{u}) \stackrel{\simeq}{\to} H^{2n-\bullet}_{BC}(\mathfrak{u})$ induced by Hodge-$*$-operator.
By Theorem \ref{sym-min}, we have an isomorphism $H^{2n-\bullet}_{BC}(\mathfrak{u})\cong H^{2n-\bullet}_{BC}\left(\bigoplus_{\beta\in\Char(S)} \wedge^{\bullet}\left( \solvmfd; E_{\beta} \right)\right) $.
Now for $\beta\in\Char(S)$, we have $\beta^{-1}\in\Char(S)$ and hence by the duality between Bott-Chern and Aeppli cohomologies of $\solvmfd$,  we have
\begin{multline*}
H^{2n-\bullet}_{BC}\left(\bigoplus_{\beta\in\Char(S)} \wedge^{\bullet}\left( \solvmfd; E_{\beta} \right)\right) \\
\cong H^\bullet_{A} \left(\bigoplus_{\beta\in\Char(S)} \wedge^{\bullet}\left( \solvmfd; E^{\ast}_{\beta} \right)\right)
=H^\bullet_{A} \left(\bigoplus_{\beta\in\Char(S)} \wedge^{\bullet}\left( \solvmfd; E_{\beta^{-1}} \right)\right)\\=H^\bullet_{A} \left(\bigoplus_{\beta\in\Char(S)} \wedge^{\bullet}\left( \solvmfd; E_{\beta} \right)\right).
\end{multline*}
Hence we have $$ H^{\bullet}_{A}(\mathfrak{u}) \cong H^{\bullet}_{A}\left(\bigoplus_{\beta\in\Char(S)} \wedge^{\bullet}\left( \solvmfd; E_{\beta} \right)\right) \; .$$
\end{proof}

We apply these results to untwisted symplectic cohomologies.
Consider $A_{\Gamma}^{\bullet}=\iota^{-1}\left(\wedge^{\bullet} \solvmfd\otimes \C\right)$ as in Note \ref{untwi-co}.
Then $A_{\Gamma}^{\bullet}$ is a sub-complex of the  bi-differential $\Z$-graded $\wedge^{\bullet} \solvmfd\otimes \C$ and  by Theorem \ref{sym-min} and Corollary \ref{aep-min}, the inclusion $A_{\Gamma}^{\bullet}\subset \wedge^{\bullet}\solvmfd \otimes \C$ induces isomorphisms
$$H^{\bullet}_{BC}(A_{\Gamma}^{\bullet}) \stackrel{\simeq}{\to} H^{\bullet}_{BC}\left(\solvmfd;\R\right)\otimes_\R\C $$and $$H^{\bullet}_{A}(A_{\Gamma}^{\bullet}) \stackrel{\simeq}{\to} H^{\bullet}_{A}\left(\solvmfd;\R\right)\otimes_\R\C.$$
Hence we have the result for symplectic Bott-Chern  and Aeppli cohomologies on general solvmanifolds as we give in Introduction.

\begin{thm}\label{thm:sympl-cohom-solvmfd}
Let $\solvmfd$ be a $2n$-dimensional solvmanifold endowed with a $G$-left-invariant symplectic structure $\omega$.
Consider the sub-complex $A_{\Gamma}^{\bullet}\subseteq \wedge^{\bullet} \solvmfd\otimes_\R \C$ defined in \eqref{eq:def-a1} (or \eqref{eq:def-a}) as in Note \ref{untwi-co}.
Suppose that $\omega\in A^{2}_{\Gamma}$.
Then, the inclusion $A_{\Gamma}^{\bullet} \hookrightarrow \wedge^{\bullet} \solvmfd\otimes_\R \C$  induces the isomorphisms
\[H^{\bullet}_{BC}(A_{\Gamma}^{\bullet}) \stackrel{\simeq}{\to} H^{\bullet}_{BC}\left(\solvmfd;\R\right)\otimes_\R\C \; \]
and
\[H^{\bullet}_{A}(A_{\Gamma}^{\bullet}) \stackrel{\simeq}{\to} H^{\bullet}_{A}\left(\solvmfd;\R\right)\otimes_\R\C \; \]
\end{thm}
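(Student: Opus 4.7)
The plan is to deduce this result directly from the twisted versions already proved, namely Theorem~\ref{sym-min} and Corollary~\ref{aep-min}, by restricting both sides of the quasi-isomorphism $\iota \colon \wedge^{\bullet}\mathfrak{u}^{\ast} \to \bigoplus_{\beta\in\Char(S)} \wedge^{\bullet}(\solvmfd; E_{\beta})$ to the summand indexed by the trivial character $1_{S}\in\Char(S)$, whose associated flat bundle is the trivial line bundle and whose space of sections is simply $\wedge^{\bullet}\solvmfd\otimes_{\R}\C$.

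First, I would check that $A^{\bullet}_{\Gamma}$, as defined in \eqref{eq:def-a1} (equivalently \eqref{eq:def-a}) via Note~\ref{untwi-co}, is a bi-differential $\Z$-graded sub-complex of $(\wedge^{\bullet}\solvmfd\otimes_{\R}\C,\, \de,\, \de^{\Lambda})$. The characters $\alpha\in \mathcal{C}_{\Gamma}$ all satisfy $\alpha\lfloor_{\Gamma}=1$, so each $\wedge^{\bullet}\g^{\ast}_{\C}\otimes V_{\alpha}$ embeds into $\wedge^{\bullet}\solvmfd\otimes\C$ by multiplication with the corresponding function on $\solvmfd$, and the $T$-invariance selects precisely the elements surviving the identification with $\wedge^{\bullet}\mathfrak{u}^{\ast}$. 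The hypothesis $\omega\in A^{2}_{\Gamma}$ is exactly what guarantees that $L=\omega\wedge\sspace$, and hence $\Lambda$ and $\de^{\Lambda}=[\de,\Lambda]$, preserve $A^{\bullet}_{\Gamma}$; this is the $T$-invariance condition that enters the proof of Theorem~\ref{sym-min}.

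Second, I would observe that the whole construction in the proof of Theorem~\ref{sym-min} is compatible with the direct-sum decomposition by characters. The inclusion $\iota$, the maps $\Phi_{\beta}=\sum_{\alpha}\mu_{\alpha}$ built from Belgun symmetrization, the projection $p$, and the projection $q$ onto $T$-invariants all respect the decomposition indexed by $\Char(S)$ on the target and by $\mathcal{C}$ on the source. Restricting to the trivial-character component on the target picks out $\wedge^{\bullet}\solvmfd\otimes\C$, while restricting to the matching source component gives exactly $(\bigoplus_{\alpha\in \mathcal{C}_{\Gamma}}\wedge^{\bullet}\g^{\ast}_{\C}\otimes V_{\alpha})^{T}=A^{\bullet}_{\Gamma}$. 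Therefore the Bott-Chern isomorphism of Theorem~\ref{sym-min}, restricted to the $1_{S}$-component, yields $H^{\bullet}_{BC}(A^{\bullet}_{\Gamma})\xrightarrow{\simeq} H^{\bullet}_{BC}(\solvmfd;\R)\otimes_{\R}\C$, and the Aeppli isomorphism of Corollary~\ref{aep-min} gives the analogous statement.

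The only point requiring care is verifying that the two maps $\iota$ and $\Psi$ constructed in the proof of Theorem~\ref{sym-min} split into direct sums along the character decomposition, so that each character component yields an independent quasi-isomorphism of bi-differential $\Z$-graded complexes. This is essentially automatic from the construction of the Belgun-type symmetrizations $\mu_{\alpha}$, which annihilate all components $V_{\alpha'}$ with $\alpha'\neq \alpha$ and $E_{\alpha_{\ast}}=E_{\alpha'_{\ast}}$, combined with the fact that $\omega$ is $T$-invariant (so all operators $L$, $\Lambda$, $\de$, $\de^{\Lambda}$ respect the character grading). Once this compatibility is in place, applying Corollary~\ref{ives} exactly as in the proof of Theorem~\ref{sym-min}, but now with the sub-complex $A^{\bullet}_{\Gamma}\hookrightarrow \wedge^{\bullet}\solvmfd\otimes\C$ in place of $\wedge^{\bullet}\mathfrak{u}^{\ast}\hookrightarrow \bigoplus_{\beta}\wedge^{\bullet}(\solvmfd;E_{\beta})$, produces both desired isomorphisms.
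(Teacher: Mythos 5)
Your proposal is correct and follows essentially the same route as the paper: the paper also deduces Theorem~\ref{thm:sympl-cohom-solvmfd} directly from Theorem~\ref{sym-min} and Corollary~\ref{aep-min} by observing that $A_{\Gamma}^{\bullet}=\iota^{-1}\left(\wedge^{\bullet}\solvmfd\otimes\C\right)$ is the component of $\wedge^{\bullet}\mathfrak{u}^{\ast}$ lying over the trivial-character summand, so the twisted isomorphisms restrict to the stated untwisted ones. The paper states this in two lines; your verification that all the maps ($\iota$, $\Phi$, $p$, $q$) and the operators $L$, $\Lambda$, $\de^{\Lambda}$ respect the character decomposition is exactly the detail implicitly used there.
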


\subsection{Examples}

As an explicit application of Theorem \ref{thm:sympl-cohom-solvmfd}, we compute the symplectic cohomologies of the (non-completely-solvable) Nakamura manifold.

\begin{ex}[The complex parallelizable Nakamura manifold]
Consider the Lie group
\[
G \;:=\; \C\ltimes_{\phi} \C^{2}
\qquad \text{ where } \qquad
\phi(z) \;:=\; \left(
\begin{array}{cc}
\esp^{z}& 0  \\
0&    \esp^{-z}  
\end{array}
\right) \;.
\]
There exist $a+\sqrt{-1}\,b \in \C$ and $c+\sqrt{-1}\,d\in \C$ such that $ \Z(a+\sqrt{-1}\,b)+\Z(c+\sqrt{-1}\,d)$ is a lattice in $\C$ and $\phi(a+\sqrt{-1}\,b)$ and $\phi(c+\sqrt{-1}\,d)$ are conjugate to elements of $\mathrm{SL}(4;\Z)$, where we regard $\mathrm{SL}(2;\C)\subset \mathrm{SL}(4;\R)$, see \cite{hasegawa-dga}.
Hence there exists a lattice $\Gamma := \left( \Z \left( a+\sqrt{-1}\,b \right) + \Z \left( c+\sqrt{-1}\,d \right) \right) \ltimes_{\phi} \Gamma^{\prime\prime}$ of $G$ such that $\Gamma^{\prime\prime}$ is a lattice of $\C^{2}$. Let $X := \left. \Gamma \middle\backslash G \right.$ be the \emph{complex parallelizable Nakamura manifold}, \cite[\S2]{nakamura}.

For a coordinate set $(z_{1},z_{2},z_{3})$ of $\C\ltimes_{\phi} \C^{2}$, we have the basis $\left\{ \frac{\partial}{\partial z_{1}}, \;  \esp^{z_{1}}\,\frac{\partial}{\partial z_{2}}, \; \esp^{-z_{1}}\,\frac{\partial}{\partial z_{3}} \right\}$ of the Lie algebra $\g_{+}$ of the $G$-left-invariant holomorphic vector fields on $G$ such that
$$ \left( \Ad_{(z_{1},z_{2},z_{3})} \right)_{\mathrm{s}} \;=\; \diag\left(1,\, \esp^{z_{1}},\, \esp^{-z_{1}}\right) \;\in\; \Aut(\g_+) \;. $$

\begin{enumerate}[{\itshape (a)}]
 \item\label{item:ex-nakamura2-1}
If $b \in \pi\,\Z$ and $d \in\pi\,\Z$, then, 
for $z\in \left(a+\sqrt{-1}\,b\right)\,\Z + \left(c+\sqrt{-1}\,d\right)\,\Z$, we have $\phi(z)\in \mathrm{SL}(2;\R)$.
In this case, the sub-complex $A_{\Gamma}^{\bullet} \subseteq \wedge^{\bullet} \solvmfd\otimes_\R \C$ defined in \eqref{eq:def-a1} is summarized in Table \ref{table:a-nak-cplx-paral-1}. (In order to shorten the notations, we write, for example, $\de z_{12\bar2}:=\de z_{1}\wedge\de z_{2}\wedge\de \bar z_{2}$.)

\begin{center}
\begin{table}[ht]
 \centering
\begin{tabular}{>{$\mathbf\bgroup}l<{\mathbf\egroup$} || >{$}l<{$}}
\toprule
\text{case {\itshape (\ref{item:ex-nakamura2-1})}} & A^{\bullet}_{\Gamma} \\
\toprule
0 & \C \left\langle 1 \right\rangle \\
\midrule[0.02em]
1 & \C \left\langle \de z_{1},\;\de  z_{\bar 1}   \right\rangle \\
\midrule[0.02em]
2 & \C \left\langle  \de z_{1\bar1},\; \de z_{23},\; \de z_{2\bar 3},\; \de z_{3\bar 2},\; \de z_{\bar2\bar3} \right\rangle \\
\midrule[0.02em]
3 & \C \left\langle \de z_{123},\; \de z_{12\bar3},\; \de z_{13\bar2},\; \de z_{3\bar1\bar2},\;\de z_{2\bar1\bar3},\;\de z_{\bar1\bar2\bar3},\;  \de z_{\bar123},\;  \de z_{1\bar2\bar3} \right\rangle \\
\midrule[0.02em]
4 & \C \left\langle \de z_{123\bar1},\;\de z_{13\bar1\bar2},\;\de z_{23\bar2\bar3},\; \de z_{12\bar1\bar3},\; \de z_{1\bar1\bar2\bar3} \right\rangle \\
\midrule[0.02em]
5 & \C \left\langle \de z_{23\bar1\bar2\bar 3} ,\; \de z_{123\bar2\bar3} \right\rangle \\
\midrule[0.02em]
6 & \C \left\langle \de z_{123\bar1\bar2\bar3} \right\rangle \\
\bottomrule
\end{tabular}
\caption{The cochain complex $A_{\Gamma}^{\bullet}$ in \eqref{eq:def-a1} for the complex parallelizable Nakamura manifold in case {\itshape (\ref{item:ex-nakamura2-1})}.}
\label{table:a-nak-cplx-paral-1}
\end{table}
\end{center}

 \item\label{item:ex-nakamura2-2}
If $b\not \in \pi\,\Z$ or $d\not \in\pi\,\Z$, then the sub-complex $A_{\Gamma}^{\bullet} \subseteq \wedge^{\bullet} \solvmfd\otimes_\R \C$ defined in \eqref{eq:def-a1} is given in Table \ref{table:a-nak-cplx-parall}.

\begin{center}
\begin{table}[ht]
 \centering
\begin{tabular}{>{$\mathbf\bgroup}l<{\mathbf\egroup$} || >{$}l<{$}}
\toprule
\text{case {\itshape (\ref{item:ex-nakamura2-2})}} & A^{\bullet}_{\Gamma} \\
\toprule
0 & \C \left\langle 1 \right\rangle \\
\midrule[0.02em]
1 & \C \left\langle \de z_{1},\;\de  z_{\bar 1}   \right\rangle \\
\midrule[0.02em]
2 & \C \left\langle  \de z_{1\bar1},\; \de z_{23},\;  \de z_{\bar2\bar3} \right\rangle \\
\midrule[0.02em]
3 & \C \left\langle \de z_{123},\;  \de z_{\bar1\bar2\bar3},\;  \de z_{\bar123},\;  \de z_{1\bar2\bar3} \right\rangle \\
\midrule[0.02em]
4 & \C \left\langle \de z_{123\bar1},\;\de z_{23\bar2\bar3},\;  \de z_{1\bar1\bar2\bar3} \right\rangle \\
\midrule[0.02em]
5 & \C \left\langle \de z_{23\bar1\bar2\bar 3} ,\; \de z_{123\bar2\bar3} \right\rangle \\
\midrule[0.02em]
6 & \C \left\langle \de z_{123\bar1\bar2\bar3} \right\rangle \\
\bottomrule
\end{tabular}
\caption{The cochain complex $A_{\Gamma}^{\bullet}$ in \eqref{eq:def-a1} for the complex parallelizable Nakamura manifold in case {\itshape (\ref{item:ex-nakamura2-2})}.}
\label{table:a-nak-cplx-parall}
\end{table}
\end{center}

\end{enumerate}

\medskip

Consider the $G$-left-invariant symplectic structure
\[\omega \;:=\; \de z_{1} \wedge \de \bar z_{1} + \de z_{2} \wedge \de z_{3} + \de \bar z_{2} \wedge \de \bar z_{3} \;. \]

Note that, in both case {\itshape (\ref{item:ex-nakamura2-1})} and case {\itshape(\ref{item:ex-nakamura2-2})}, we have $\omega\in A_{\Gamma}^{2}$.
The operator $\de$ on $A_{\Gamma}^{\bullet}$ is trivial and so also $\de^{\Lambda}$ is.
Hence we have the natural isomorphism $A_{\Gamma}^{\bullet} \stackrel{\simeq}{\to} H_{BC}^{\bullet}(\solvmfd)\otimes_\R\C$.
Since we have also the natural isomorphism $A_{\Gamma}^{\bullet} \stackrel{\simeq}{\to} H_{dR}^{\bullet}(\solvmfd;\C)$, the natural map $H_{BC}^{\bullet}(\solvmfd) \otimes_\R\C \to H_{dR}^{\bullet}(\solvmfd;\C)$ is an isomorphism.
Hence the $\de\de^{\Lambda}$-lemma holds, equivalently, the Hard Lefschetz Condition holds.

\begin{rem}[{\cite{kasuya-osaka}}]
 In particular, by the direct computations of Lefschetz operators, we can show that solvmanifolds $\solvmfd$ such that $G=\R^{n}\ltimes_{\phi} \R^{m}$ with a semi-simple action $\phi\colon \R^n \to \GL(\R^m)$ satisfy the Hard Lefschetz Condition, \cite[Corollary 1.5]{kasuya-osaka}.
 In particular, the completely-solvable Nakamura manifold satisfies the Hard Lefschetz Condition, \cite{kasuya-osaka}.
\end{rem}

\end{ex}

We investigate explicitly the Sawai manifold, \cite{Saw}, as an example of a symplectic solvmanifold satisfying the Hard Lefschetz Condition but not the twisted Hard Lefschetz Condition and not the $DD^\Lambda$-Lemma, see also \cite{kasuya-jdg}.
We compute the twisted symplectic Bott-Chern cohomology and the twisted minimal model.

\begin{ex}[The Sawai manifold {\cite{Saw}, \cite[Example 1]{kasuya-jdg}}] 
We consider the $8$-dimensional solvable Lie group
$$ G \;:=\; G_{1}\times \R $$
where $G_{1}$ is the matrix group defined as
$$
G_1 \;:=\; \left \{\left( \begin{array}{ccccccc}
\esp^{a_{1}t}&0&0&0&0&\esp^{-a_{3}t}\,x_{2}&y_{1}\\
0&\esp^{a_{2}t}&0&\esp^{-a_{1}t}\,x_{3}&0&0&y_{2}\\
0&0&\esp^{a_{3}t}&0&\esp^{-a_{2}t}\,x_{1}&0&y_{3}\\
0&0&0&\esp^{-a_{1}t}&0&0&x_{1}\\
0&0&0&0&\esp^{-a_{2}t}&0&x_{2}\\
0&0&0&0&0&\esp^{-a_{3}t}&x_{3}\\
0&0&0&0&0&0&1
\end{array}
\right) \st t, x_{1}, x_{2}, x_{3}, y_{1}, y_{2}, y_{3}\in\R\right\} \;,
$$
where $a_{1}$, $a_{2}$, $a_{3}$ are distinct real numbers such that $a_{1}+a_{2}+a_{3}=0$.

Let $\g$ be the Lie algebra of $G$ and $\g^{\ast}$ the dual of $\g$.
The cochain complex $\left(\wedge^\bullet \g^{\ast}, \, \de\right)$ is generated by a basis $\left\{\alpha ,\beta, \zeta_{1}, \zeta_{2}, \zeta_{3}, \eta _{1}, \eta _{2}, \eta _{3}\right\}$ of $\g^{\ast}$ such that
\begin{eqnarray*}
 \de\alpha &=& 0 \;, \\[5pt]
 \de\beta &=& 0 \;, \\[5pt]
 \de\zeta_{j} &=& a_{j}\,\alpha \wedge \zeta_{j} \qquad \text{ for } j\in\{1,2,3\} \;, \\[5pt]
 \de\eta_{1} &=& -a_{1}\, \alpha \wedge \eta_{1} - \zeta_{2} \wedge \zeta _{3} \;, \\[5pt]
 \de\eta_{2} &=& -a_{2}\, \alpha \wedge \eta_{2} - \zeta_{3} \wedge \zeta _{1} \;, \\[5pt]
 \de\eta_{3} &=& -a_{3}\, \alpha \wedge \eta_{3} - \zeta_{1} \wedge \zeta _{2} \;.
\end{eqnarray*}

In \cite[Theorem 1]{Saw}, H. Sawai showed that, for some $a_{1}, a_{2}, a_{3}\in \R$, the group $G$ has a lattice $\Gamma$ and $\solvmfd$ satisfies formality and has a $G$-invariant symplectic form,
\[ \omega \;:=\; \alpha \wedge \beta + \zeta _{1} \wedge \eta_{1} - 2\, \zeta _{2} \wedge \eta_{2} + \zeta_{3} \wedge \eta_{3} \;, \]
satisfying the Hard Lefschetz Condition.

We have
\[
\Ad_{s}(G) \;=\:
\left \{\left( \begin{array}{cccccccc}
\esp^{a_{1}t}&0&0&0&0&0&0&0\\
0&\esp^{a_{2}t}&0&0&0&0&0&0\\
0&0&\esp^{a_{3}t}&0&0&0&0&0\\
0&0&0&\esp^{-a_{1}t}&0&0&0&0\\
0&0&0&0&\esp^{-a_{2}t}&0&0&0\\
0&0&0&0&0&\esp^{-a_{3}t}&0&0\\
0&0&0&0&0&0&1&0\\
0&0&0&0&0&0&0&1
\end{array}
\right) \st t\in\R \right\} \;.
\]

Consider the $1$-dimensional representation
$$ \alpha_{1} \;:=\; \esp^{a_{1}t} \;. $$
Then, in \cite[Theorem 9.1]{kasuya-jdg}, the second author showed that $\left(\solvmfd,\, \omega\right)$ does not satisfy the $E_{\alpha_{1}}$-twisted Hard Lefschetz Condition.

We compute the two cohomologies $H^{\bullet}_{dR}\left(\solvmfd; E_{\alpha_{1}}\right)$ and $H^{\bullet}_{BC}\left(\solvmfd; E_{\alpha_{1}}\right)$ by using Theorem \ref{thm:twisted-sympl-bc-cohom-solvmfd}. The results of the computations are summarized in Table \ref{table:sawai-dr}, respectively Table \ref{table:sawai-bc}.

\begin{center}
\begin{table}[ht]
 \centering
\resizebox{\textwidth}{!}{
\begin{tabular}{>{$\mathbf\bgroup}l<{\mathbf\egroup$} || >{$}l<{$} | >{$}c<{$}}
\toprule
\text{$k$} & H^{k}_{dR}\left(\solvmfd; E_{\alpha_{1}}\right) & \dim_\C H^{k}_{dR}\left(\solvmfd; E_{\alpha_{1}}\right) \\
\toprule
\midrule[0.02em]
0 & 0 & 0 \\
\midrule[0.02em]
1 & \C \left\langle [\zeta_{1}]_{\alpha_{1}} \right\rangle & 1 \\
\midrule[0.02em]
2 & \C \left\langle  [\alpha\wedge\zeta_{1}]_{\alpha_{1}},\;  [\beta\wedge\zeta_{1}]_{\alpha_{1}}  \right\rangle & 2 \\
\midrule[0.02em]
3 & \C \left\langle  [\alpha\wedge\beta\wedge\zeta_{1}]_{\alpha_{1}},\; [\zeta_{1}\wedge \zeta_{2}\wedge\eta_{2}+\zeta_{1}\wedge \zeta_{3}\wedge\eta_{3}]_{\alpha_{1}} \right\rangle & 2 \\
\midrule[0.02em]
4 & \C \left\langle [\alpha\wedge\zeta_{1}\wedge \zeta_{2}\wedge\eta_{2}+\alpha\wedge\zeta_{1}\wedge \zeta_{3}\wedge\eta_{3}]_{\alpha_{1}}, [\beta\wedge\zeta_{1}\wedge \zeta_{2}\wedge\eta_{2}+\beta\wedge\zeta_{1}\wedge \zeta_{3}\wedge\eta_{3}]_{\alpha_{1}} \right\rangle & 2 \\
\midrule[0.02em]
5 & \C \left\langle [\alpha\wedge\beta\wedge\zeta_{1}\wedge \zeta_{2}\wedge\eta_{2}+\alpha\wedge\beta\wedge\zeta_{1}\wedge \zeta_{3}\wedge\eta_{3}]_{\alpha_{1}} \right\rangle & 1 \\
\midrule[0.02em]
6 & 0 & 0 \\
\midrule[0.02em]
7 & 0 & 0 \\
\midrule[0.02em]
8 & 0 & 0 \\
\bottomrule
\end{tabular}
}
\caption{$E_{\alpha_1}$-twisted de Rham cohomology $H^{\bullet}_{dR}\left(\solvmfd; E_{\alpha_{1}}\right)$ of the Sawai manifold.}
\label{table:sawai-dr}
\end{table}
\end{center}

\begin{center}
\begin{table}[ht]
 \centering
\resizebox{\textwidth}{!}{
\begin{tabular}{>{$\mathbf\bgroup}l<{\mathbf\egroup$} || >{$}l<{$} | >{$}c<{$}}
\toprule
\text{$k$} & H^{k}_{BC}\left(\solvmfd; E_{\alpha_{1}}\right) & \dim_\C H^{k}_{BC}\left(\solvmfd; E_{\alpha_{1}}\right) \\
\toprule
\midrule[0.02em]
0 & 0 & 0 \\
\midrule[0.02em]
1 & \C \left\langle [\zeta_{1}]_{\alpha_{1}} \right\rangle & 1 \\
\midrule[0.02em]
2 & \C \left\langle  [\alpha\wedge\zeta_{1}]_{\alpha_{1}},\;  [\beta\wedge\zeta_{1}]_{\alpha_{1}}  \right\rangle & 2 \\
\midrule[0.02em]
3 & \C \left\langle  [\alpha\wedge\beta\wedge\zeta_{1}]_{\alpha_{1}},\; [\zeta_{1}\wedge \zeta_{2}\wedge\eta_{2}]_{\alpha_{1}} ,\;[\zeta_{1}\wedge \zeta_{3}\wedge\eta_{3}]_{\alpha_{1}} \right\rangle & 3 \\
\midrule[0.02em]
4 & \C \left\langle [\alpha\wedge\zeta_{1}\wedge \zeta_{2}\wedge\eta_{2}]_{\alpha_{1}},\;[\alpha\wedge\zeta_{1}\wedge \zeta_{3}\wedge\eta_{3}]_{\alpha_{1}},\; [\beta\wedge\zeta_{1}\wedge \zeta_{2}\wedge\eta_{2}]_{\alpha_{1}},\;[\beta\wedge\zeta_{1}\wedge \zeta_{3}\wedge\eta_{3}]_{\alpha_{1}} \right\rangle & 4 \\
\midrule[0.02em]
5 & \C \left\langle [\zeta_{1}\wedge \zeta_{2}\wedge\eta_{2}\wedge \zeta_{3}\wedge\eta_{3}]_{\alpha_{1}},\;
 [\alpha\wedge\beta\wedge\zeta_{1}\wedge \zeta_{2}\wedge\eta_{2}]_{\alpha_{1}},\;[\alpha\wedge\beta\wedge\zeta_{1}\wedge \zeta_{3}\wedge\eta_{3}]_{\alpha_{1}} \right\rangle & 3 \\
\midrule[0.02em]
6 & [\alpha\wedge\zeta_{1}\wedge \zeta_{2}\wedge\eta_{2}\wedge \zeta_{3}\wedge\eta_{3}]_{\alpha_{1}},\;[\beta\wedge\zeta_{1}\wedge \zeta_{2}\wedge\eta_{2}\wedge \zeta_{3}\wedge\eta_{3}]_{\alpha_{1}} & 2 \\
\midrule[0.02em]
7 & [\alpha\wedge\beta\wedge\zeta_{1}\wedge \zeta_{2}\wedge\eta_{2}\wedge \zeta_{3}\wedge\eta_{3}]_{\alpha_{1}} & 1 \\
\midrule[0.02em]
8 & 0 & 0 \\
\bottomrule
\end{tabular}
}
\caption{$E_{\alpha_1}$-twisted Bott-Chern cohomology $H^{\bullet}_{BC}\left(\solvmfd; E_{\alpha_{1}}\right)$ of the Sawai manifold.}
\label{table:sawai-bc}
\end{table}
\end{center}

By these computations, the natural map $H^{\bullet}_{BC}\left(\solvmfd; E_{\alpha_{1}}\right) \to H^{\bullet}_{dR}\left(\solvmfd; E_{\alpha_{1}}\right)$ induced by the identity is surjective, and hence there is a symplectically-harmonic representative in each de Rham cohomology class with values in $E_{\alpha_{1}}$.
On the other hand, the natural map $H^{\bullet}_{BC}\left(\solvmfd; E_{\alpha_{1}}\right) \to H^{\bullet}_{dR}\left(\solvmfd; E_{\alpha_{1}}\right)$ induced by the identity is not injective, and hence the $D D^{\Lambda}$-Lemma does not hold.

Next we consider the twisted minimal model $\wedge^{\bullet}\mathfrak{u}$.
Consider
$$ \alpha_{2} \;:=\; \esp^{a_{2}t} \;, \qquad \alpha_{3} \;:=\; \esp^{a_{3}t} \;, $$
and define, for $j\in\{1,2,3\}$,
$$ \breve\alpha \;:=\; \alpha \;, \qquad \breve\beta \;:=\; \beta \;, \qquad \breve \zeta_{j} \;:=\; \zeta_{j}\otimes v_{\alpha_{j}} \;, \qquad \text{ and } \qquad \breve \eta_{j} \;:=\; \eta_{j}\otimes v_{\alpha^{-1}_{j}} \;. $$
Then we have
$$ \wedge^{\bullet}\mathfrak{u} \;=\; \wedge^{\bullet} \left\langle \breve\alpha,\, \breve\beta,\, \breve \zeta_{1},\, \breve \zeta_{2},\, \breve \zeta_{3},\, \breve \eta_{1},\, \breve \eta_{2},\, \breve \eta_{3} \right\rangle $$
such that
\begin{eqnarray*}
\de\breve\alpha \;=\; \de\breve\beta \;=\; \de\breve\zeta_{1} \;=\; \de\breve\zeta_{2} \;=\; \de\breve\zeta_{3} &=& 0 \;, \\[5pt]
\de\breve\eta_{1} &=& \breve\zeta_{2}\wedge\breve\zeta_{3} \;, \\[5pt]
\de\breve\eta_{2} &=& \breve \zeta_{3}\wedge\breve\zeta_{1} \;, \\[5pt]
\de\breve\eta_{3} &=& \breve\zeta_{1}\wedge \breve\zeta_{2} \;.
\end{eqnarray*}

We have 
$$ \omega \;=\; \breve\alpha\wedge \breve\beta +\breve\zeta _{1}\wedge \breve\eta_{1}-2\,\breve\zeta_{2}\wedge \breve\eta_{2} +\breve\zeta_{3}\wedge \breve\eta_{3} \;. $$

We have
$$ H^{\bullet}_{BC}({\mathfrak{u}}) \;\simeq\; H^{\bullet}_{BC}\left(\bigoplus_{\beta\in\Char(S)} \wedge^{\bullet}\left( \solvmfd; E_{\beta} \right)\right) \;.
$$
\end{ex}

\end{document}